\definecolor{black}{rgb}{0.0, 0.0, 0.0}
\definecolor{red}{rgb}{1.0, 0.5, 0.5}
\newcommand{\margnote}[1]{
\ifthenelse{\boolean{shownotes}}%
{\marginpar{\raggedright\tiny\texttt{#1}}}%
{}%
}
\newcommand{\hole}[1]{
\ifthenelse{\boolean{shownotes}}%
{\begin{center} \fbox{ \rule {.25cm}{0cm} \rule[-.1cm]{0cm}{.4cm}
\parbox{.85\textwidth}{\begin{center} \texttt{#1}\end{center}} \rule
{.25cm}{0cm}}\end{center}} {} }
\title[Kinetic Flocking--Navier-Stokes model]{On the analysis of a coupled kinetic-fluid model with local alignment forces}
\author[Carrillo]{Jos\'e A. Carrillo}
\address[Jos\'e A. Carrillo]{\newline Department of Mathematics \newline
Imperial College London, London SW7 2AZ, United Kingdom}
\email{carrillo@imperial.ac.uk}
\author[Choi]{Young-Pil Choi}
\address[Young-Pil Choi]{\newline Department of Mathematics \newline
Imperial College London, London SW7 2AZ, United Kingdom}
\email{young-pil.choi@imperial.ac.uk}
\author[Karper]{Trygve K. Karper}
\address[Trygve K. Karper]{\newline Department of Mathematical Sciences \newline
Norwegian University of Science and Technology, Trondheim, N-7491, Norway}
\email{karper@gmail.com}
\numberwithin{equation}{section}
\newtheorem{theorem}{Theorem}[section]
\newtheorem{lemma}{Lemma}[section]
\newtheorem{corollary}{Corollary}[section]
\newtheorem{proposition}{Proposition}[section]
\newtheorem{remark}{Remark}[section]
\newtheorem{definition}{Definition}[section]
\newcommand{\bbr}{\mathbb R}
\newcommand{\bbt}{\mathbb T}
\newcommand{\mt}{\mathcal{T}}
\def\charf {\mbox{{\text 1}\kern-.30em {\text l}}}
\newcommand{\eps}{\varepsilon}
\newcommand{\Grad}{\nabla}
\newcommand{\vr}{\varrho}
\newcommand{\Div}{\Grad \cdot }
\newcommand{\T}{\mathbb{T}}
\newcommand{\R}{\mathbb{R}}
\renewcommand{\S}{\mathcal{S}}
\renewcommand{\H}{\mathcal{H}}
\newcommand{\weak}{\rightharpoonup}
\newcommand{\Set}[1]{\left\{#1\right\}}
\newcommand{\vc}[1]{{\bm #1}}
\begin{document}
\allowdisplaybreaks

\date{\today}



\begin{abstract}
This paper studies global existence, hydrodynamic limit,
and large-time behavior of weak solutions to a kinetic flocking model
coupled to the incompressible Navier-Stokes equations.
The model describes the motion of particles immersed in a Navier-Stokes fluid
interacting through local alignment. We first prove the existence of
weak solutions using energy and $L^p$ estimates together with the
velocity averaging lemma. We also rigorously establish a
hydrodynamic limit corresponding to strong noise and local alignment.
In this limit, the dynamics can be totally described by a coupled compressible
Euler - incompressible Navier-Stokes system. The proof is via
relative entropy techniques. Finally, we show a conditional result on the large-time behavior
of classical solutions. Specifically, if the mass-density satisfies a uniform in time integrability estimate, then particles align with the fluid velocity exponentially fast without any further assumption on the viscosity of the fluid.
\end{abstract}

\maketitle \centerline{\date}

\setcounter{tocdepth}{1}
\tableofcontents

%
%
%
%
\section{Introduction}
In the animal kingdom, one can find several species where the action of  individuals leads to large coherent structures and where there are no external forces or ``leader" guiding the interaction. Perhaps the most famous examples are flocks of birds, schools of fish, or insect swarms. However, similar phenomena in self-organization are also relevant for bacteria, in robotic engineering, and in material science. The past decade has witnessed a massive growth in the attempts to develop mathematical models capturing these types of phenomena. These models are usually based on incorporating different mechanisms of interaction between the individuals such as local repulsion, long-range attraction, and alignment. These Individual Based Models lead to macroscopic descriptions by means of mean-field limit scalings, see \cite{CFTV} for a review. These continuum descriptions can be written as kinetic equations in which there is a mechanism of interaction in the velocity or orientation vector. A very simple idea implementing the consensus mechanism in velocity was introduced by Cucker and Smale in \cite{CS} and improved recently in \cite{MT}. These models take into account nonlocal interactions of the particles by averaging in velocity space. Here, we will focus on a much stronger local averaging of the velocity vector and the effect of a fluid in the tendency to consensus. We will explain the relation to these classical models of alignment below.

The model under consideration governs the motion of particles immersed in a Navier-Stokes fluid interacting through local alignment. By local alignment, we mean that each particle actively tries to align its velocity to that
of its closest neighbors. The particles and fluid are coupled through linear friction. If we let $f = f(x,\xi,t)$ be the one-particle distribution function at a spatial periodic domain $x \in \bbt^3, \xi \in \bbr^3$ at time $t$, and $u =u(x,t)$ be the bulk velocity of fluid, then our model reads
\begin{align}\label{V-INS}
    \partial_t f + \xi \cdot \nabla_x f &= \alpha \nabla_{\xi} \cdot \big[ (\xi - u)f\big] + \beta \nabla_{\xi} \cdot \big[ (\xi - u_f)f\big]  + \sigma \Delta_{\xi} f \nonumber\\
    \partial_t u + u \cdot \nabla_x u + \nabla_x p &= \mu \Delta_x u - \alpha \rho_f ( u - u_f) \\
    \Div u &= 0 \nonumber
\end{align}
subject to initial data
\begin{equation}\label{ini-V-INS}
f(x,\xi,0) = f_0(x,\xi), \quad u(x,0) = u_0(x),
\end{equation}
where $\alpha,\beta,\sigma > 0$ are constants, and $\rho_f$ and $u_f$ denote the average local density and velocity, respectively
\begin{equation}\label{def-locals}
\rho_f :=\int_{\bbr^3} f d\xi, \quad  \rho_f u_f := \int_{\bbr^3} \xi f d\xi.
\end{equation}
The model \eqref{V-INS} contains as particular cases two previously studied models in the literature. If $\beta = 0$, the model reduces to the fluid-particle model studied in \cite{GJV,GJV2}, see also \cite{BDGM,CG,Ham,MV,MV2}. They analyzed the existence of weak solutions and their hydrodynamic limit. On the other hand, if $\alpha = 0$, \eqref{V-INS} decouples and becomes the kinetic flocking model studied in \cite{KMT, KMT2, KMT3}. This latter series of papers establish existence of weak solutions and hydrodynamic limit, but leaves out the question of large-time behavior. 

In this paper, we shall be concerned with the case $\alpha$, $\beta > 0$. This introduces new difficulties compared to the previous studies, requiring non trivial arguments to overcome them. To prove existence of weak solutions to \eqref{V-INS}, the main challenges are posed by the product $fu_f$ and the lack of regularity on $u$. In the first case, weak compactness of $fu_f$ is not trivial as there does not seem to be any available regularity in a spatial domain. Moreover, $u_f$ is only defined on regions with $\vr_f > 0$ and hence does not belong to any $L^p$-space. In this paper, we will obtain the needed compactness from the velocity averaging lemma together with some technical arguments. This  part of the proof will be similar to the existence proof in
\cite{KMT} for \eqref{V-INS} with $\alpha = 0$. However, the coupling with the Navier-Stokes equations introduces new problems that are not straightforward to handle.

Since the equation \eqref{V-INS} is posed in $2d +1$ dimensions, finding an approximate solution is  computationally expensive. For this reason, it is of interest to identify regimes
where the complexity of the equations reduces. In this paper, we shall rigorously identify one such regime corresponding to strong noise and local alignment. That is, the case where $\beta
\sim \sigma \sim \eps^{-1}$, where $\eps$ is a small number. We will establish that in this
case $f$ is close to a thermodynamical equilibrium $f \sim c_0\vr_f e^{-|u_f-\xi|^2/2}$
and that the dynamics can be well approximated by a compressible Euler equation
for $(\vr_f, u_f)$ coupled to the incompressible Navier-Stokes equations for $u$ (See Section \ref{subs2-2} for clarity). We will achieve this result by establishing a relative entropy inequality. Though this type of inequality was originally devised in \cite{Daf} to prove weak-strong uniqueness results, it has also been successfully applied to hydrodynamic limits for kinetic equations \cite{GLT,Lan,Yau}. The perhaps most relevant study is \cite{KMT2}, where \eqref{V-INS} with $\alpha = 0$ is studied. However, with $\beta > 0$, deriving a relative entropy bound is more involved and requires completely new arguments that does not
have a kin in the literature.

For the estimates of large-time behaviour of solutions, when $\beta = 0$, i.e., no local alignment force, the particle-fluid equations \eqref{V-INS} reduces to the Vlasov-Navier-Stokes-Fokker-Planck equations. For this system, classical solutions near Maxwellians converging asymptotically to them were constructed in \cite{GHMZ}. More recently, the incompressible Euler-Fokker-Planck equations ($\beta = 0$ and $\mu = 0$) were treated in \cite{CDM} showing the existence of a unique classical solution near Maxwellians converging to them. On the other hand, without the diffusive term ($\sigma = 0$), the particle-fluid system has no trivial equilibria, and as a consequence the previous arguments used in \cite{CDM,GHMZ} for the estimates of large-time behaviour can not be applied. The large-time behaviour of the Vlasov-Navier-Stokes equations, to our knowledge, have only been studied in \cite{BCHK2,BCHK3}. By replacing the Cucker-Smale alignment force in \cite{BCHK2} by the local alignment one, we will show the emergence of alignment between fluid and particles as time evolves.

Let us now give some explanation for the term local alignment and how this pertains to the Cucker-Smale flocking model.
In the previous decade, Cucker \& Smale \cite{CS} introduced a Newtonian-type flocking model
 using $\ell^2$-based arguments:
\begin{equation}\label{level-par}
\frac{dx_i}{dt} = \xi_i, \quad \frac{d\xi_i}{dt} = \sum_{j=1}^N\psi^{cs}_{ij}(\xi_j - \xi_i), \quad t > 0, \quad i \in \{1,\dots,N\},
\end{equation}
where $x_i(t) \in \R^d$ and $\xi_i(t) \in \R^d$ are the position and velocity of $i$-th particles at time $t$, respectively and  were $\psi^{cs}_{ij}$ is a communication weight between particles defined by
\begin{equation}\label{cs-comm}
\psi^{cs}_{ij}:= \frac1N\psi^{cs}(|x_i-x_j|), \quad i,j \in \{1,\dots,N\}.
\end{equation}
Subsequently, this flocking model and its invariants have been extensively studied in a vast number of papers
such as \cite{CCR,CFRT,HL,HT} to mention a few.
However, more recently Motsch and Tadmor pointed out several
deficiencies with the Cucker-Smale model, and suggested a new model which take into account not only distance between particles but also their relative distance \cite{MT}. More precisely, they considered a nonsymmetric communication weight normalized with a local average density:
\begin{equation}\label{mt-comm}
\psi^{mt}_{ij}:= \frac{\psi^{cs}_{ij}}{\sum_{k=1}^N\psi^{cs}_{ik}}
\end{equation}
As a result, the Motsch-Tadmor model does not involve any explicit dependence on the number of particles. Since $\psi_{ij}^{mt}$ is nonsymmetric, they introduce a new tools based on the notion of active sets to estimate the flocking behavior of particles.

On the other hand, when the number of particles goes to infinity, $N \to \infty$, one can formally derive a mesoscopic description for system \eqref{level-par}-\eqref{mt-comm} with density function $f=f(x,\xi,t)$ which is a solution to the Vlasov-type equation:
\begin{displaymath}\label{kinetic-mt}
\left\{ \begin{array}{ll}
& \partial_t f + \xi\cdot \nabla f + \nabla_{\xi} \cdot (F[f]f) = 0,\\[2mm]
& F[f](x,\xi) := \frac{\int_{\R^d \times \R^d}  \psi^{cs}(|x-y|)(\xi_* - \xi)f(y,\xi_*)dyd\xi_*}{\int_{\R^d} \psi^{cs}(|x-y|)\rho_f(y) dy},\\[2mm]
& f_0(x,\xi) := f(x,\xi,0).
\end{array}\right.
\end{displaymath}
Now, notice that $F[f]$ can be rewritten as
\[
F[f]= \tilde u_f - \xi \quad \mbox{where} \quad \tilde u_f := \frac{\int_{\R^d \times \R^d}  \psi^{cs}(|x-y|)\xi_*f(y,\xi_*)dyd\xi_*}{\int_{\R^d} \psi^{cs}(|x-y|)\rho_f(y) dy}.
\]
and hence that this equation is a non-local version of $\eqref{V-INS}_1$. However, we can localize the previous derivation by assuming that the communication rate is very concentrated around the closest neighbors of a given particle, i.e, that $\psi^{cs}(x)$ is close to a Dirac Delta at the origin. Under this localization of the alignment, it is reasonable to expect $\eqref{V-INS}_1$ as the $N \rightarrow \infty$ limit of the Motsch-Tadmor model. Some formal indications on its validity are provided in \cite{KMT3}.

In the next section, we state our main results. Then, in Section 3, we provide a priori energy and $L^p$ estimates. Section 4 is devoted to  the proof of global existence of weak solutions using Schauder's fixed point theorem and velocity averaging lemma.  In Section 5, we rigorously investigate the convergence of weak solutions to the system \eqref{V-INS} when the local alignment and diffusive forces are sufficiently strong.
In Section 6, we show  a priori estimates for long-time behavior of solutions.

\textbf{Notation.-} We provide several simplified notations that are used throughout the paper. For a function $f(x,\xi)$ ($(x,\xi)\in \T^3 \times \R^3$), we denote by $\|f\|_{L^p}$ the usual $L^p(\bbt^3 \times \bbr^3)$-norm, and if $u$ is a function of $x \in \bbt^3$, we denote by $\|u\|_{L^p}$ the usual $L^p(\bbt^3)$-norm, otherwise specified. We also drop $x$-dependence of differential operators $\partial_{x_i}$, $\nabla_x$, and $\Delta_x$, i.e., $\partial_if := \partial_{x_i} f $, $\nabla f := \nabla_x f$ and $\Delta f := \Delta_x f$.

\section{Main results}
In this section, we state the three main results
of this paper. Our first result concerns the existence of
global weak solutions to \eqref{V-INS}. In the second
result, we rigorously study a hydrodynamic limit
of \eqref{V-INS} corresponding to strong noise
and strong local alignment.
 Our final result is an estimate
on the large-time behavior of solutions to \eqref{V-INS} with $\sigma = 0$.
The latter result assumes that the solutions are sufficiently integrable and the particle density is uniformly bounded in time.

\subsection{Existence of weak solutions}
Let us define
\[
\mathcal{H} := \{ w\in L^2(\bbt^3)~|~ \nabla_x \cdot w
 = 0\},\quad \mathcal{V} :=\{ w\in H^1(\bbt^3)~|~\nabla_x \cdot w = 0\}.
\]
and denote by $\mathcal{V}^{\prime}$ the dual space of $\mathcal{V}$.

Existence will be proved using the following notion of weak solutions.

\begin{definition}\label{def-weak} Suppose the initial data $(f_0, u_0)$ satisfy
\begin{equation}\label{init_cond1}
f_0 \in (L^1_+ \cap L^{\infty})(\bbt^3 \times \bbr^3), \quad |\xi|^2 f _0 \in L^{1}(\bbt^3 \times \bbr^3), \quad u_0 \in \mathcal{H}.
\end{equation}

For a given $T \in (0, \infty)$, we say that the pair $(f, u)$ is a weak solution of \eqref{V-INS}-\eqref{ini-V-INS}
provided the following conditions are satisfied:
\begin{enumerate}
\item
$ f\in L^{\infty}(0,T; (L^1_+ \cap L^{\infty})(\bbt^3\times\bbr^3)), \quad
|\xi|^2 f\in L^{\infty}(0,T;L^1(\bbt^3 \times\bbr^3)).$
\item
$ u\in L^{\infty}(0,T;\mathcal{H})\cap L^2(0,T;\mathcal{V})
\cap \mathcal{C}^0([0,T],\mathcal{V}^{\prime}).$
\item For all $\phi \in \mathcal{C}^1(\bbt^3 \times\bbr^3 \times [0, T))$
with $\phi(\cdot,\cdot,T) = 0$,
\begin{align*}
&-\int_0^T \int_{\bbt^3 \times \bbr^3}
f\left(\partial_t\phi +\xi\cdot \nabla_x \phi~d\xi\right) dxds \nonumber \\
&\quad \qquad - \int_0^T \int_{\bbt^3 \times \bbr^3}\big( \alpha(u - \xi)f + \beta (u_f - \xi)f - \sigma \nabla_{\xi}f \big) \cdot \nabla_{\xi}\phi~ d\xi dx ds
 \cr
&\quad \qquad  = \int_{\bbt^3\times\bbr^3}
 f_0 \phi (\cdot,\cdot,0)\,d\xi dx.
\end{align*}
\item For all $ \psi\in [\mathcal{C}^1(\bbt^3 \times [0, T])]^3$, and $\nabla_x\cdot\psi =0,~\mbox{for a.e.}~t$,
\begin{align*} 
&\int_{\bbt^3}
 u(t)\cdot \psi(t) dx
 +\int_0^t \int_{\bbt^3}
\left(- u\cdot\partial_t\psi -
(u\cdot\nabla_x)\psi \cdot u + \mu
\nabla_x u : \nabla_x \psi \right)dx ds \notag\\
& \hspace{1cm} =-\int_0^t \int_{\bbt^3}
(u-u_f)\cdot\psi \rho_f\,dx ds + \int_{\bbt^3} u_0 \cdot
\psi(0,\cdot)\,dx.
\end{align*}
\end{enumerate}
\end{definition}

Our existence result is given by the following theorem.
\begin{theorem}\label{weak-thm} Suppose the initial data $(f_0, u_0)$ satisfies \eqref{init_cond1}.
Then for any $T>0$ there exists at least one weak solution $(f, u)$ to \eqref{V-INS}-\eqref{ini-V-INS} on the time-interval $(0, T)$.
\end{theorem}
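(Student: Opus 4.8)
The plan is to obtain a weak solution as the limit of solutions to a regularized system, combining a Schauder fixed-point argument with the a priori bounds of Section~3 and the velocity averaging lemma; the construction follows the scheme used in \cite{KMT,GJV} but requires care in the new coupling terms. \emph{Step 1 (regularized problem and fixed point).} For small $\eps,\delta>0$ and large $R$, I would regularize $\eqref{V-INS}_1$ by adding an artificial viscosity $\eps\Delta_x f$, multiplying the velocity transport and the forcing by a smooth cutoff $\chi_R(\xi)$ supported in $\{|\xi|\le 2R\}$, and replacing the singular field $u_f$ by the bounded, spatially mollified version
\[
u_f^\delta:=\frac{(\rho_f u_f)*\eta_\delta}{\rho_f*\eta_\delta+\delta},
\]
with $\eta_\delta$ a standard mollifier. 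On a closed ball of $L^2(0,T;\H)$ define $\mt:\bbu\mapsto u$ by first solving the resulting kinetic equation with $u=\bbu$ --- now uniformly parabolic and with a regular $f$-nonlinearity, hence uniquely solvable with $f=f[\bbu]\ge 0$ --- and then solving the forced Navier--Stokes system with lower-order source $-\alpha\rho_{f[\bbu]}(u-u_{f[\bbu]})$, a standard Leray problem. Continuity of $\mt$ comes from stability estimates for the two linear(ized) problems, compactness of $\mt$ from parabolic regularity for $f[\bbu]$ and the Aubin--Lions lemma for $u$, and the self-mapping property from the energy estimate below; Schauder's theorem then produces a solution $(f^{\eps,\delta,R},u^{\eps,\delta,R})$, nonnegative in $f$.

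\emph{Step 2 (uniform estimates).} One next verifies that the estimates of Section~3 hold uniformly in $(\eps,\delta,R)$. The maximum principle for the Fokker--Planck operator together with $f\ge 0$ yields $\|f\|_{L^\infty(\bbt^3\times\bbr^3)}\le\|f_0\|_{L^\infty}e^{Ct}$ and the mass bound $\|f\|_{L^1}\le\|f_0\|_{L^1}$. Testing the kinetic equation with $\frac12|\xi|^2$ (and with $\sigma\log f$ to absorb the diffusion) and the fluid equation with $u$, then adding, the alignment terms collapse into nonnegative dissipation and one obtains a uniform bound on $\sup_t\big(\frac12\int|\xi|^2 f+\frac12\|u\|_{L^2}^2\big)$ and on $\int_0^T\big(\mu\|\nabla_x u\|_{L^2}^2+\alpha\int|u-\xi|^2 f+\beta\int|u_f-\xi|^2 f+\sigma\int|\nabla_\xi\sqrt f|^2\big)$. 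From the $|\xi|^2$-moment, Cauchy--Schwarz gives $\rho_f|u_f|^2=|\int\xi f|^2/\rho_f\le\int|\xi|^2 f$, so $\rho_f|u_f|^2$ is bounded in $L^\infty(0,T;L^1(\bbt^3))$, while $\rho_f$ is bounded in $L^\infty(0,T;L^{5/3}(\bbt^3))$ by interpolating the $L^1$ and $L^\infty$ bounds against the moment.

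\emph{Step 3 (passage to the limit).} Letting $(\eps,\delta,R)\to 0$ along a subsequence, extract weak-$*$ limits $f^n\weak f$ and $u^n\weak u$ in the spaces of Definition~\ref{def-weak}; the diffusion and viscosity terms are stable under weak convergence. To handle the nonlinearities I apply the velocity averaging lemma to the regularized kinetic equation, whose right-hand side is a $\xi$-derivative (of order $\le 2$) of quantities bounded in $L^1_{t,x,\xi}$ with a $|\xi|^2$-moment: this gives strong compactness in some $L^p((0,T)\times\bbt^3)$, $p>1$, of $\rho_{f^n}=\int f^n\,d\xi$ and of $m^n:=\int\xi f^n\,d\xi=\rho_{f^n}u_{f^n}$. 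The fluid drag term $\alpha\rho_{f^n}(u^n-u_{f^n})=\alpha\rho_{f^n}u^n-\alpha m^n$ then converges: $u^n\to u$ strongly in $L^2((0,T)\times\bbt^3)$ by Aubin--Lions (using $\partial_t u^n$ bounded in $L^{4/3}(0,T;\V')$), $\rho_{f^n}\to\rho_f$ strongly, and $m^n$ converges strongly. In the weak form of $\eqref{V-INS}_1$, the term $\beta(\xi-u_{f^n})f^n$ splits into the piece $\xi f^n$, controlled against the test function by the moment bound, and the piece $u_{f^n}f^n$. For the latter I split according to $\{\rho_{f^n}>\lambda\}$ and $\{\rho_{f^n}\le\lambda\}$: on the first set $u_{f^n}=m^n/\rho_{f^n}$ converges strongly (denominator bounded below), so that contribution passes to the limit; on the second, $\int_{\{\rho_{f^n}\le\lambda\}}|u_{f^n}|f^n=\int_{\{\rho_{f^n}\le\lambda\}}\rho_{f^n}|u_{f^n}|\le\big(\int_{\{\rho_{f^n}\le\lambda\}}\rho_{f^n}\big)^{1/2}\big(\int\rho_{f^n}|u_{f^n}|^2\big)^{1/2}\le(\lambda\,|\bbt^3|)^{1/2}C$, which is small uniformly in $n$ as $\lambda\to 0$. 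A diagonal argument in $(n,\lambda)$ closes the limit, and the time-continuity of $u$ together with the attainment of the initial data follow in the standard way from the equations and the uniform bounds.

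\emph{Main obstacle.} The essential difficulty is the last point: passing to the limit in $fu_f$. Since $u_f=(\rho_f u_f)/\rho_f$ is defined only where $\rho_f>0$, belongs to no fixed Lebesgue space, and the product $fu_f$ has no apparent spatial regularity, weak convergence is not enough; one must combine the strong compactness of the velocity moments (from velocity averaging) with the energy bound on $\rho_f|u_f|^2$ via the level-set truncation above. Once this is in place, every other term --- notably the new drag coupling $\alpha\rho_f(u-u_f)$ in both equations --- is handled by comparatively routine compactness.
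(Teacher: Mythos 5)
Your proposal follows essentially the same route as the paper: regularize (velocity cutoffs, mollification, a desingularized $u_f$), run a Schauder fixed point, derive uniform energy/$L^p$/moment bounds, obtain compactness of $\rho_{f^n}$ and $m^n$ from velocity averaging and of $u^n$ from Aubin--Lions, and resolve the vacuum problem in $fu_f$ by a level-set splitting together with the Cauchy--Schwarz bound $\int_{\{\rho\le\lambda\}}\rho\,|u_f|\le(\lambda|\T^3|)^{1/2}\left(\int\rho|u_f|^2\right)^{1/2}$. The one place the paper is more careful is that it defines the level sets by the \emph{limit} density $\rho_f$ and invokes Egoroff's theorem to get uniform convergence of $\rho_{f^\eps}$ on most of that set --- your sets $\{\rho_{f^n}>\lambda\}$ move with $n$, so ``strong convergence of $m^n/\rho_{f^n}$ on that set'' needs this extra step --- but that is a routine repair of the same idea.
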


The proof of Theorem \ref{weak-thm} is the topic of Section \ref{sec4}.

\subsection{Hydrodynamic limit}\label{subs2-2}
In our second result, we study the regime where
the noise and local alignment are relatively strong compared
to the other terms. That is, for $\eps$ small, we
consider the system
\begin{align}\label{seq-V-INS}
\begin{aligned}
&\partial_t f^\eps + \xi \cdot \nabla_x f^\eps + \nabla_{\xi} \cdot \left[ (u^\eps - \xi)f^\eps\right] = \frac{1}{\eps} \nabla_\xi \cdot \left[ \nabla_\xi f^\eps - (u_{f^\eps} - \xi)f^\eps\right],\cr
&\partial_t u^\eps + u^\eps \cdot \nabla_x u^\eps + \nabla_x p^\eps - \mu \Delta_x u^\eps = - \int_{\R^3} (u^\eps - \xi) f^\eps d\xi,\cr
&\nabla_x \cdot u^\eps = 0.
\end{aligned}
\end{align}
Now, observe that the right-hand side can be written
\[
\nabla_\xi \cdot \left[ \nabla_\xi f^\eps - (u_{f^\eps} - \xi)f^\eps \right] = \nabla_\xi \cdot \left( M^\eps \nabla_\xi \left( \frac{f^\eps}{M^\eps}\right) \right),
\]
where we have introduced the Maxwellian
\[
M^\eps(x,\xi,t) := \frac{1}{(2\pi)^{3/2}}e^{-\frac{|\xi - u_{f^\eps}(x,t)|^2}{2}}.
\]
Consequently, if we have that $u_{f^\eps} \to u_f$ and $u^\eps \to u$, then
we expect that $f^\eps$ converges to the thermodynamical  equilibrium
\[
f^\eps \to M_{\rho_f,u_f}(x,\xi,t):= \frac{\rho_f(x,t)}{(2\pi)^{3/2}}e^{-\frac{|\xi - u_f(x,t)|^2}{2}} \quad \mbox{as} \quad \eps \to 0.
\]

In this case, it can be readily seen that $\rho_f,u_f$, and $u$ evolves according to the fluid-particle model
\begin{align}\label{hydro-eqn}
\begin{aligned}
&\partial_t \rho_f + \nabla_x \cdot \rho_f u_f = 0,\cr
&\partial_t (\rho_f u_f) + \nabla_x \cdot (\rho_f u_f \otimes u_f) + \nabla_x \rho_f = \rho_f(u-u_f),\cr
&\partial_t u + u \cdot \nabla_x u + \nabla_x p - \mu \Delta_x u = -\rho_f(u-u_f),\cr
&\nabla_x \cdot u =0,
\end{aligned}
\end{align}
subject to
\begin{equation}\label{ini-hydro-eqn}
(\rho_f(x,0),u_f(x,0),u(x,0)) = (\rho_{f_0},u_{f_0},u_0), \quad x \in \T^3.
\end{equation}

In our second result fact, we prove that weak solutions of \eqref{seq-V-INS}
are close to a strong unique solution of \eqref{hydro-eqn}.
Hence, if $\eps$ is sufficiently small, \eqref{hydro-eqn}
provides a good approximation of \eqref{seq-V-INS}. 

\begin{theorem}\label{lim-thm} Assume that there exists a unique strong solution $(\rho_f,u_f,u)$ to the system \eqref{hydro-eqn}-\eqref{ini-hydro-eqn} in the interval $[0,T^*]$. Furthermore suppose that $(f_0,u_0)$ satisfies \eqref{init_cond1}, and $f_0$ is given by
\[
f_0(x,\xi) = \frac{\rho_{f_0}(x)}{(2\pi)^\frac32}e^{-\frac{|\xi - u_{f_0}(x)|^2}{2}}.
\]
Then, for any sequences of weak solutions $(f^\eps,u^\eps)$ to the system \eqref{seq-V-INS}, we have
\begin{equation*}
    \begin{split}
        \sup_{ 0 \leq t \leq T^*} \left(\| u_{f^\eps} - u_{f}\|_{L^2}^2
        +\|\rho_{f^\eps} - \rho_f\|_{L^2}^2
        +\| u^{\eps} - u\|_{L^2}^2\right)
        \leq C \sqrt{\eps}.
    \end{split}
\end{equation*}
As a consequence, as $\eps \to 0$,
\begin{align*}
\begin{aligned}
&f^\eps \to \frac{\rho_f}{(2\pi)^\frac32} e^{-\frac{|\xi - u_f|^2}{2}} \quad \mbox{in } L^1_{loc}(0,T^*;L^1(\T^3 \times \R^3)),\cr
&\rho_{f^\eps}u_{f^\eps} \to \rho_f u_f \quad \mbox{in } L^1_{loc}(0,T^*;L^1(\T^3)),\cr
&\rho_{f^\eps}|u_{f^\eps}|^2 \to \rho_f |u_f|^2 \quad \mbox{in } L^1_{loc}(0,T^*;L^1(\T^3)),\cr
&u^\eps \to u \quad \mbox{in } L^1_{loc}(0,T^*;L^2(\T^3)),\cr
\end{aligned}
\end{align*}

\end{theorem}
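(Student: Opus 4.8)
The plan is a relative-entropy (modulated free energy) argument that simultaneously yields the rate $C\sqrt\eps$ and the four convergences. Throughout, $(\rho_f,u_f,u)$ denotes the strong solution on $[0,T^*]$, which — being a strong solution of the compressible Euler part of \eqref{hydro-eqn} — I take to be smooth with $\rho_f$ bounded above and below by positive constants, and $(f^\eps,u^\eps)$ is a weak solution of \eqref{seq-V-INS} carrying the energy and entropy bounds produced by the construction of Section~\ref{sec4} together with the a priori estimates of Section~3; in particular $\|\,|\xi|^2f^\eps\|_{L^\infty(0,T^*;L^1)}$ and $\iint f^\eps\log f^\eps$ are bounded and $\eps^{-1}\int_0^{T^*}\!\iint f^\eps|\nabla_\xi\log(f^\eps/M^\eps)|^2\le C$, all uniformly in $\eps$. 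I work with
\[\mathcal I^\eps(t):=\iint_{\T^3\times\R^3}f^\eps\log\frac{f^\eps}{M_{\rho_f,u_f}}\,d\xi\,dx+\tfrac12\int_{\T^3}|u^\eps-u|^2\,dx .\]
Since both systems conserve mass and $f_0=M_{\rho_{f_0},u_{f_0}}$, one has $\int\rho_{f^\eps}(t)=\int\rho_f(t)$ for all $t$, so the first term coincides with the relative free energy $\mathcal F(f^\eps\,|\,M_{\rho_f,u_f})\ge0$ and $\mathcal I^\eps(0)=0$. Expanding, $\mathcal I^\eps$ decomposes as the sum of the kinetic relative entropy $\iint f^\eps\log\bigl(f^\eps/(\rho_{f^\eps}M^\eps)\bigr)\ge0$, the pressure relative entropy $\int\bigl[\rho_{f^\eps}\log\rho_{f^\eps}-\rho_f\log\rho_f-(1+\log\rho_f)(\rho_{f^\eps}-\rho_f)\bigr]\ge0$, and $\tfrac12\int\rho_{f^\eps}|u_{f^\eps}-u_f|^2+\tfrac12\|u^\eps-u\|_{L^2}^2$; by convexity of $s\mapsto s\log s$ and the positive lower bound of $\rho_f$ these control the three quantities in the statement, so the theorem reduces to $\sup_{[0,T^*]}\mathcal I^\eps\le C\sqrt\eps$.

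Next I derive a differential inequality for $\mathcal I^\eps$. Using $\frac{d}{dt}\mathcal F(f^\eps|M_{\rho_f,u_f})=\iint\partial_tf^\eps\,\log(f^\eps/M_{\rho_f,u_f})-\iint f^\eps\,\partial_t\log M_{\rho_f,u_f}$, I insert the weak form of the $f^\eps$-equation (tested against the $\xi$-quadratic weight $\log(f^\eps/M_{\rho_f,u_f})$, admissible thanks to the moment bounds), replace $\partial_t\log M_{\rho_f,u_f}$ using the strong-solution equations for $(\rho_f,u_f)$, and add $\frac{d}{dt}\tfrac12\|u^\eps-u\|_{L^2}^2$ by testing the $u^\eps$- and $u$-equations against $u^\eps-u$ (legitimate since $u$ is divergence free). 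The genuinely new point, absent for $\beta=0$, is the $\eps^{-1}$ local-alignment/diffusion term: writing it as $\eps^{-1}\nabla_\xi\!\cdot\!\bigl(M^\eps\nabla_\xi(f^\eps/M^\eps)\bigr)$ and splitting $\log(f^\eps/M_{\rho_f,u_f})=\log(f^\eps/M^\eps)+\bigl[\xi\cdot(u_{f^\eps}-u_f)+(\text{const.\ in }\xi)\bigr]$, one sees that the $\xi$-affine remainder pairs with $M^\eps\nabla_\xi(f^\eps/M^\eps)=\nabla_\xi f^\eps+(\xi-u_{f^\eps})f^\eps$ only through its $\xi$-gradient $u_{f^\eps}-u_f$, which is killed by $\int_{\R^3}\bigl(\nabla_\xi f^\eps+(\xi-u_{f^\eps})f^\eps\bigr)d\xi=0$ — here the self-consistency of $M^\eps=(2\pi)^{-3/2}e^{-|\xi-u_{f^\eps}|^2/2}$ is essential. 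Thus the $\eps^{-1}$ term contributes exactly the nonnegative dissipation $\eps^{-1}\iint f^\eps|\nabla_\xi\log(f^\eps/M^\eps)|^2$ to the left-hand side. Carrying out the standard compressible-Euler rearrangements — in which the linear-in-$(\rho_{f^\eps}-\rho_f)$ and linear-in-$(u_{f^\eps}-u_f)$ contributions from the convective fluxes, from $\nabla_x\rho_f$, and from $\partial_t M_{\rho_f,u_f}$ cancel against one another, leaving only quadratic modulations and a kinetic stress defect — I arrive, for a.e.\ $t\in[0,T^*]$, at
\[\mathcal I^\eps(t)+\frac1\eps\int_0^t\!\!\iint f^\eps\bigl|\nabla_\xi\log(f^\eps/M^\eps)\bigr|^2\,d\xi\,dx\,ds\;\le\;C\int_0^t\mathcal I^\eps\,ds+\int_0^t\!\!\int_{\T^3}S^\eps:\nabla_x u_f\,dx\,ds ,\]
where $C$ depends on the $W^{1,\infty}$-norms of $(\rho_f,u_f,u)$, the lower bound of $\rho_f$, and $T^*$ (the dissipations $\mu\int_0^t\|\nabla(u^\eps-u)\|_{L^2}^2$ and $\int_0^t\!\int\rho_{f^\eps}|u^\eps-u_{f^\eps}|^2$ may also be kept on the left), and $S^\eps:=\iint f^\eps(\xi-u_{f^\eps})\otimes(\xi-u_{f^\eps})\,d\xi-\rho_{f^\eps}\,\mathrm{Id}$.

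For the defect term, an integration by parts in $\xi$ (again using that $M^\eps$ carries the velocity $u_{f^\eps}$) gives $S^\eps=\int_{\R^3}(\xi-u_{f^\eps})\otimes M^\eps\nabla_\xi(f^\eps/M^\eps)\,d\xi$, whence by Cauchy--Schwarz $\|S^\eps(t)\|_{L^1(\T^3)}\le\bigl(\iint|\xi-u_{f^\eps}|^2f^\eps\bigr)^{1/2}\bigl(\iint f^\eps|\nabla_\xi\log(f^\eps/M^\eps)|^2\bigr)^{1/2}\le C\sqrt{D^\eps(t)}$ with $D^\eps(t):=\iint f^\eps|\nabla_\xi\log(f^\eps/M^\eps)|^2\,d\xi\,dx$ and $\int_0^{T^*}D^\eps\le C\eps$. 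Hence $\int_0^t\bigl|\int_{\T^3}S^\eps:\nabla_x u_f\bigr|\,ds\le C\int_0^t\sqrt{D^\eps}\,ds\le C\sqrt{T^*}\bigl(\int_0^{T^*}D^\eps\bigr)^{1/2}\le C\sqrt\eps$ (this Cauchy--Schwarz in time is the origin of the $\sqrt\eps$ rate). Feeding this into the inequality above and using $\mathcal I^\eps(0)=0$, Gr\"onwall's lemma gives $\mathcal I^\eps(t)\le C\sqrt\eps\,e^{CT^*}$ on $[0,T^*]$, i.e.\ the asserted estimate. The four convergences then follow: $\|u^\eps-u\|_{L^2}^2\le2\mathcal I^\eps\to0$; the generalized Csisz\'ar--Kullback--Pinsker inequality applied to the pressure relative entropy (whose space-time integral is $\le C\sqrt\eps$) gives $\rho_{f^\eps}\to\rho_f$ in $L^1_{loc}(0,T^*;L^1(\T^3))$, which with $\int\rho_{f^\eps}|u_{f^\eps}-u_f|^2\le2\mathcal I^\eps\to0$ gives $\rho_{f^\eps}u_{f^\eps}\to\rho_f u_f$ and, via $\rho_{f^\eps}|u_{f^\eps}|^2=\rho_{f^\eps}|u_{f^\eps}-u_f|^2+2\rho_{f^\eps}u_{f^\eps}\cdot u_f-\rho_{f^\eps}|u_f|^2$, also $\rho_{f^\eps}|u_{f^\eps}|^2\to\rho_f|u_f|^2$, all in $L^1_{loc}(0,T^*;L^1(\T^3))$; finally $\mathcal F(f^\eps|M_{\rho_f,u_f})\le\mathcal I^\eps\to0$ together with Csisz\'ar--Kullback--Pinsker yields $f^\eps\to M_{\rho_f,u_f}$ in $L^1_{loc}(0,T^*;L^1(\T^3\times\R^3))$.

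\textbf{The main obstacle} is the derivation of the relative-entropy inequality in the second paragraph, and it has two distinct parts. The first, which is the genuinely new ingredient compared with the $\beta=0$ analyses of \cite{GJV,GJV2,KMT2}, is the control of the singular $\eps^{-1}$ term: one must show it produces precisely the nonnegative Fokker--Planck dissipation — so that the a priori $O(\eps^{-1})$ coupling with the modulation disappears — and leaves a stress defect $S^\eps$ of size $O(\sqrt{D^\eps})=O(\sqrt\eps)$ rather than $O(1)$; this is exactly what the identity $\int_{\R^3}M^\eps\nabla_\xi(f^\eps/M^\eps)\,d\xi=0$ for the self-consistent Maxwellian delivers, and it is the reason $M^\eps$ must be built from $u_{f^\eps}$ rather than from $u_f$. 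The second is bookkeeping, but heavy: the many remainder terms from the nonlinear fluxes $\rho_{f^\eps}u_{f^\eps}\otimes u_{f^\eps}$ versus $\rho_fu_f\otimes u_f$, from the particle--fluid drag (now present simultaneously with the alignment), and from $\partial_t M_{\rho_f,u_f}$ must be shown to reorganize into quantities bounded by $\mathcal I^\eps$, with all linear-in-modulation pieces cancelling; since $f^\eps$ is only a weak solution, these computations are first carried out on the approximating sequence of Section~\ref{sec4} and then passed to the limit.
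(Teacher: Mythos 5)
Your proposal is correct in substance and is a recognizable variant of the paper's own relative-entropy proof, but the functional you modulate is different, so a comparison is worthwhile. You differentiate the full \emph{kinetic} relative entropy $\iint f^\eps\log\bigl(f^\eps/M_{\rho_f,u_f}\bigr)\,d\xi\,dx+\tfrac12\|u^\eps-u\|_{L^2}^2$ directly, which forces you to compute $\iint f^\eps\,\partial_t\log M_{\rho_f,u_f}$ from the limit equations and to observe that the singular term pairs trivially with the $\xi$-affine part of $\log(f^\eps/M_{\rho_f,u_f})-\log(f^\eps/M^\eps)$ because $\int_{\R^3}\bigl(\nabla_\xi f^\eps+(\xi-u_{f^\eps})f^\eps\bigr)\,d\xi=0$; this is a correct and rather clean way to see why the $O(\eps^{-1})$ coupling disappears. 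The paper instead modulates only the \emph{macroscopic} entropy $E(U)=\vr_f\log\vr_f+|m_f|^2/(2\vr_f)+|u|^2/2$, i.e.\ works with $\int_{\T^3}\H(U^\eps|U)\,dx$ (which is exactly your $\mathcal I^\eps$ minus the nonnegative kinetic part $\iint f^\eps\log\bigl(f^\eps/(\rho_{f^\eps}M^\eps)\bigr)$), and feeds the kinetic information in only through the telescoping estimate of $I_1$: one writes $\int E(U^\eps)(t)-\int E(U_0)$ as $\bigl(\int E(U^\eps)-\mathcal F(f^\eps,u^\eps)\bigr)+\bigl(\mathcal F(f^\eps,u^\eps)-\mathcal F(f_0,u_0)\bigr)+\bigl(\mathcal F(f_0,u_0)-\int E(U_0)\bigr)$, uses $\int E(U^\eps)\le\mathcal F(f^\eps,u^\eps)$, the uniform entropy inequality \eqref{hydro-better-est-2}, and the Maxwellian form of $f_0$ to make the whole bracket $O(\eps)$, thereby never differentiating the kinetic entropy against the time-dependent Maxwellian. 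Your route buys a more transparent identification of where the Fisher dissipation and the stress defect $S^\eps$ come from (your $S^\eps$ is, up to sign, exactly the defect $\int(u_{f^\eps}\otimes u_{f^\eps}-\xi\otimes\xi+\mathbb I)f^\eps\,d\xi$ appearing in the paper's $I_3$, and your Cauchy--Schwarz-in-time argument is precisely the cited [KMT2, Lemma 4.8] mechanism giving the $\sqrt\eps$ rate); the paper's route buys a cleaner separation between the (once-and-for-all) entropy inequalities for weak solutions and the purely macroscopic Gronwall argument, which matters because $f^\eps$ is only a weak solution and the identity you would need must first be justified on the approximating sequence. One point where you are glib: the drag cross term $\int_{\T^3}(\rho_f-\rho_{f^\eps})(u^\eps-u)\cdot(u-u_f)\,dx$ (the paper's $I_4$) is \emph{not} bounded by $C\mathcal I^\eps$ alone; it must be split by Cauchy--Schwarz using $1\le\min(1/\rho_f,1/\rho_{f^\eps})(\rho_f+\rho_{f^\eps})$ and half of the drag dissipation $\int\rho_{f^\eps}|(u_{f^\eps}-u^\eps)-(u_f-u)|^2$ must be sacrificed to absorb it. Your parenthetical remark that this dissipation ``may also be kept on the left'' implicitly covers this, but in any written-out version it is an essential, not optional, step.
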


\subsection{Large-time behavior}
Our third result is a large-time behavior estimate for our kinetic model. To state this result, we introduce several energy-fluctuation functions:
\begin{align*}
    {\mathcal E}_P(t) &:= \frac{1}{2}\int_{\bbt^3 \times \bbr^3}|\xi - u_f|^2 f dx d\xi \\
    \mathcal{E}_U(t) &:= \frac{1}{2}\int_{\bbt^3 \times\bbt^3} |u_f(x) - u_f(y)|^2 \rho_f(x) \rho_f(y) dx dy \\
    \mathcal{E}_F(t) &:= \frac{1}{2}\int_{\bbt^3} |u - u_c(t) |^{2} dx \\
    \mathcal{E}_I(t) &:= \frac{1}{2}|u_c(t)-\xi_c(t)|^2,
\end{align*}
where $u_c$ and $\xi_c$ are the mean bulk velocity of the fluid and the averaged particle velocity:
\[
u_c := \int_{\bbt^3} u~dx \quad \mbox{and} \quad \xi_c:=\int_{\bbt^3 \times \bbr^3} \xi f ~d\xi dx.
\]
We finally set a total energy function $\mathcal{E}$:
\[
\mathcal{E}(t) := 2\mathcal{E}_P(t) + \mathcal{E}_U(t) + 2\mathcal{E}_F(t) + \mathcal{E}_I(t).
\]
For this analysis, without loss of generality, we assume $\alpha = \beta = 1$.
\begin{theorem}\label{a-priori-est-lt} Let $(f,u)$ be global in time classical solutions to the system \eqref{V-INS}-\eqref{ini-V-INS} with $\sigma = 0$ satisfying
\[
\mathcal{E}(0)<\infty,\qquad \lim_{|\xi| \to \infty} |\xi|^2 f (x,\xi,t) = 0, \qquad (x,t) \in \bbt^3 \times [0,\infty).
\]
Assume that $\|\rho_f\|_{L^{\infty}(0,\infty;L^{3/2}(\bbt^3))} < \infty$, then the total energy fluctuation function $\mathcal{E}(t)$ satisfies
\[
\frac{d}{dt}\mathcal{E}(t) \leq -C\mathcal{E}(t), \quad \mbox{for} \quad t \in [0,\infty),
\]
where $C$ is a positive constant depending on $\mu, \rho_f$.
\end{theorem}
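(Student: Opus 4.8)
The plan is to establish a Grönwall-type differential inequality for the total energy fluctuation $\mathcal{E}(t)$ by differentiating each of its four constituent pieces in time and carefully tracking the dissipation and the cross terms. First I would compute $\frac{d}{dt}\mathcal{E}_P$ using the kinetic equation $\eqref{V-INS}_1$ with $\sigma = 0$; because $u_f$ depends on $f$, one must be careful with the term $\int |\xi - u_f|^2 \partial_t u_f \cdot (\ldots)$, but the algebra should collapse (as in the Cucker--Smale computations of \cite{BCHK2}) so that the local alignment force $\beta\nabla_\xi\cdot[(\xi-u_f)f]$ produces a clean negative contribution controlled by $\mathcal{E}_U$, and the friction force $\alpha\nabla_\xi\cdot[(\xi-u)f]$ produces $-2\mathcal{E}_P$ plus a cross term coupling $u_f$ to $u$. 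Next I would differentiate $\mathcal{E}_U$ using the continuity equation $\partial_t\rho_f + \nabla_x\cdot(\rho_f u_f)=0$ and the local momentum balance obtained by integrating $\eqref{V-INS}_1$ against $\xi$; here the pressure/stress term $\nabla_x\cdot\int(\xi-u_f)\otimes(\xi-u_f)f\,d\xi$ will appear and should be absorbable into $\mathcal{E}_P$-type quantities up to the boundedness hypothesis. Then $\frac{d}{dt}\mathcal{E}_F$ comes from the Navier--Stokes equation; subtracting the equation for $u_c$ (which is just the spatial average of the momentum balance) gives the viscous dissipation $-\mu\|\nabla_x u\|_{L^2}^2$ plus a drag term $-\alpha\int\rho_f(u-u_f)\cdot(u-u_c)$. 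Finally $\frac{d}{dt}\mathcal{E}_I$ is the simplest: $u_c - \xi_c$ satisfies a closed ODE since the alignment and drag forces between particles and fluid are equal and opposite in the total-momentum balance, so $\frac{d}{dt}(u_c-\xi_c)$ is proportional to $-(u_c-\xi_c)$ up to corrections, giving $-2\mathcal{E}_I$ plus controllable remainders.

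The key structural point, which I expect makes the particular linear combination $\mathcal{E} = 2\mathcal{E}_P + \mathcal{E}_U + 2\mathcal{E}_F + \mathcal{E}_I$ work, is that the cross terms generated across the four derivatives cancel in pairs or can be estimated by a small multiple of the dissipation terms. After summing, I would expect to arrive at something of the form
\[
\frac{d}{dt}\mathcal{E}(t) \le -c_1\|\nabla_x u\|_{L^2}^2 - c_2\mathcal{E}_P - c_3\mathcal{E}_U - c_4\mathcal{E}_I + (\text{remainders involving }\rho_f).
\]
To close this I would invoke the Poincaré inequality on $\bbt^3$ to bound $\mathcal{E}_F = \frac12\|u - u_c\|_{L^2}^2 \lesssim \|\nabla_x u\|_{L^2}^2$ (valid precisely because we subtract the mean $u_c$), converting viscous dissipation into $-c\mathcal{E}_F$. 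This is the one place where the statement's claim that the decay rate depends on $\mu$ enters; no \emph{smallness} of $\mu$ is needed, only its positivity.

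The main obstacle will be controlling the remainder terms that are genuinely nonlinear in $f$ — in particular the transport/stress flux $\int \xi\otimes\xi\, f\,d\xi$ appearing when one differentiates $\mathcal{E}_U$, and the drag term $\int\rho_f(u-u_f)\cdot(\text{something})\,dx$ — without any bound on higher moments of $f$ beyond the energy. This is exactly where the hypothesis $\|\rho_f\|_{L^\infty(0,\infty;L^{3/2})}<\infty$ is used: by Hölder's inequality with the Sobolev embedding $H^1(\bbt^3)\hookrightarrow L^6(\bbt^3)$, one can estimate $\int \rho_f |u - u_c|^2\,dx \le \|\rho_f\|_{L^{3/2}}\|u-u_c\|_{L^6}^2 \lesssim \|\rho_f\|_{L^{3/2}}\|\nabla_x u\|_{L^2}^2$, so that the bad drag terms are dominated by the viscous dissipation provided we keep track of constants; similarly $\rho_f |u_f|^2$ integrals are handled by Cauchy--Schwarz in $\xi$ plus the $L^{3/2}$ bound. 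The decomposition $u - u_f = (u - u_c) + (u_c - \xi_c) + (\xi_c - u_f)$ together with $\int\rho_f(\xi_c - u_f)\,dx$-type identities lets one route every cross term either into a dissipation term or into one of the four energies with a small coefficient, and a final choice of the weights $2,1,2,1$ (or a rescaling thereof) makes the quadratic form negative definite, yielding $\frac{d}{dt}\mathcal{E}\le -C\mathcal{E}$ with $C = C(\mu,\|\rho_f\|_{L^\infty_t L^{3/2}})>0$.
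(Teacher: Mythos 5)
Your proposal follows essentially the same route as the paper: differentiate each of the four fluctuation energies, combine them with the weights $2,1,2,1$ so that the stress term $\int_{\bbt^3}(\nabla\cdot\tilde{P})\cdot u_f\,dx$ and the cross term $\int(\xi-\xi_c)\cdot u f\,d\xi dx$ cancel exactly, and then close the Gr\"onwall inequality via Poincar\'e together with $\int_{\bbt^3}\rho_f|u-u_c|^2dx\le\|\rho_f\|_{L^{3/2}}\|u-u_c\|_{L^6}^2\le C\|\rho_f\|_{L^{3/2}}\|\nabla u\|_{L^2}^2$. The only (cosmetic) difference is that the paper first obtains the exact identity $\frac{d}{dt}\mathcal{E}+\mathcal{D}=0$ with $\mathcal{D}=4\mathcal{E}_P+2\mu\|\nabla u\|_{L^2}^2+2\int|u-\xi|^2f$, and then proves the coercivity $\mathcal{E}\le C\mathcal{D}$ as a separate claim, rather than absorbing remainders term by term as you describe.
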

\begin{remark}
Since the total momentum $u_c(t) + \xi_c(t)$ is conserved, we find
\[
\frac12\mathcal{E}_I(t) = \left|u_c(t) - \frac12(\xi_c(0) + u_c(0))\right|^2 = \left|\xi_c(t) - \frac12(\xi_c(0) + u_c(0))\right|^2.
\]
Thus this deduces the emergence of exponential alignment between particles and fluid, and they asymptotically converge to half of the initial total momentum. Notice that the previous theorem makes no assumption on the viscosity of the fluid.
\end{remark}

%
%
%
%
\section{Preliminary material}
The purpose of this section is to derive
  {\it a priori} energy and
$L^p$ estimates for the system \eqref{V-INS}. We will also provide two technical lemmata
that will be frequently applied  in the subsequent analysis.
In this process, we shall use the following notations for the $k$-th local and global momentums
\begin{equation*}\label{notations}
m_k(f)(x,t) = \int_{\bbr^3} |\xi|^k f d\xi, \quad M_{k}(f)(t) := \int_{\bbt^3 \times \bbr^3} |\xi|^k f(x,\xi) dx d\xi,
\end{equation*}
where $k=0,1 \ldots$. We also observe that
\[
\rho_f = m_0(f)(x,t), \quad |\rho_f u_f | \leq m_1(f)(x,t), \quad \mbox{and} \quad M_k(f)(t) = \int_{\bbt^3} m_k(f) dx.
\]
\subsection{A priori energy and $L^p$ estimate}

The following proposition provides an
energy estimate.
\begin{proposition}\label{energy-prop}
Let $(f,u)$ be any fast decaying at infinity smooth solutions to the system \eqref{V-INS}. Then, the following properties hold
\begin{align*}
\begin{aligned}
(i)~ & \frac{d}{dt}\int_{\bbt^3 \times \bbr^3} f d\xi dx = 0,\\
 (ii)~& \frac{d}{dt}\left( \int_{\bbt^3} u dx + \int_{\bbt^3 \times \bbr^3} \xi f d\xi dx \right) = 0\cr
(iii)~ & \frac{1}{2} \frac{d}{dt} \left( \int_{\bbt^3 \times \bbr^3} |\xi|^2 f d\xi dx + \int_{\bbt^3} |u|^2 dx \right) + \mu \int_{\bbt^3} |\nabla u|^2 dx \cr
& = - \alpha \int_{\bbt^3 \times \R^3} |u - \xi|^2 f d\xi dx  - \beta \int_{\bbt^3 \times \bbr^3} | u_f - \xi|^2 f d\xi dx + 3\sigma \int_{\bbt^3 \times \bbr^3} f d\xi dx.\cr
\end{aligned}
\end{align*}
\end{proposition}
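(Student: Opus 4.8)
The plan is to test each equation of \eqref{V-INS} against its natural weights and integrate by parts, using the periodicity of $\bbt^3$ and the constraint $\Div u = 0$; the assumed fast decay of $f$ in $\xi$ makes all boundary terms in $\xi$ vanish. For $(i)$, integrate $\eqref{V-INS}_1$ over $\bbt^3 \times \bbr^3$: the transport term $\xi\cdot\nabla_x f$ is an $x$-divergence and integrates to zero on the torus, while the three terms on the right-hand side are $\xi$-divergences and integrate to zero over $\bbr^3$, giving $\frac{d}{dt}\int f\,d\xi dx = 0$.

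For $(ii)$, multiply $\eqref{V-INS}_1$ by $\xi$ and integrate. The transport term contributes $\nabla_x\cdot\int \xi\otimes\xi\, f\,d\xi = 0$. Integrating by parts in $\xi$, the friction term yields $-\alpha\int(\xi-u)f\,d\xi dx = \alpha\int\rho_f(u-u_f)\,dx$ (using $\int u f\,d\xi = \rho_f u$ since $u$ is $\xi$-independent), the local-alignment term yields $-\beta\int(\xi-u_f)f\,d\xi dx = 0$ by the very definition of $u_f$ in \eqref{def-locals}, and the diffusion term vanishes since $\Delta_\xi\xi = 0$. Hence $\frac{d}{dt}\int\xi f\,d\xi dx = \alpha\int\rho_f(u-u_f)\,dx$. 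Integrating $\eqref{V-INS}_2$ over $\bbt^3$, the convective term $\Div(u\otimes u)$, the pressure gradient, and $\mu\Delta_x u$ all integrate to zero, leaving $\frac{d}{dt}\int u\,dx = -\alpha\int\rho_f(u-u_f)\,dx$. Adding the two identities proves $(ii)$.

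For $(iii)$, multiply $\eqref{V-INS}_1$ by $\frac12|\xi|^2$ and integrate; the transport term gives $\frac12\nabla_x\cdot\int|\xi|^2\xi f\,d\xi = 0$. Integrating by parts in $\xi$ with $\nabla_\xi(\frac12|\xi|^2) = \xi$, the friction term becomes $-\alpha\int\xi\cdot(\xi-u)f\,d\xi dx$; decomposing $\xi = (\xi-u)+u$ this equals $-\alpha\int|\xi-u|^2 f\,d\xi dx + \alpha\int\rho_f\,u\cdot(u-u_f)\,dx$. The local-alignment term becomes $-\beta\int\xi\cdot(\xi-u_f)f\,d\xi dx$; writing $\xi = (\xi-u_f)+u_f$ and using $\int(\xi-u_f)f\,d\xi = 0$, this collapses to $-\beta\int|\xi-u_f|^2 f\,d\xi dx$. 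The diffusion term gives $\frac{\sigma}{2}\int\Delta_\xi(|\xi|^2)f\,d\xi dx = 3\sigma\int f\,d\xi dx$. On the fluid side, take the $L^2$ inner product of $\eqref{V-INS}_2$ with $u$: the convective and pressure terms vanish because $\Div u = 0$, $\mu\int u\cdot\Delta_x u\,dx = -\mu\int|\nabla_x u|^2\,dx$, and the drag term equals $-\alpha\int\rho_f\,u\cdot(u-u_f)\,dx$. Summing the kinetic and fluid energy balances, the two contributions $\pm\alpha\int\rho_f\,u\cdot(u-u_f)\,dx$ cancel and $(iii)$ follows.

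There is no genuine analytical obstacle here: the proof is essentially bookkeeping of integration-by-parts identities. The two points worth flagging are that the fluid--particle drag enters the kinetic $|\xi|^2$-balance and the fluid $|u|^2$-balance with opposite signs, so that their cancellation is exactly what isolates the three nonnegative dissipation terms $\alpha\int|\xi-u|^2 f$, $\beta\int|\xi-u_f|^2 f$, and $\mu\int|\nabla u|^2$; and that the local-alignment force contributes nothing to the momentum identity $(ii)$ while dissipating precisely the temperature-like quantity $\int|\xi-u_f|^2 f$, a structural feature that is used repeatedly in the later sections.
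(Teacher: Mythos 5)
Your proof is correct and follows essentially the same route as the paper: integrate the kinetic equation against $1$, $\xi$, and $\tfrac12|\xi|^2$, pair the fluid equation with $1$ and $u$, and observe that the drag contributions $\pm\alpha\int\rho_f\,u\cdot(u-u_f)\,dx$ cancel upon summation. The only (cosmetic) difference is that you expand $\xi=(\xi-u)+u$ in the $\alpha$-term before combining with the fluid balance, whereas the paper combines the raw terms $\alpha\int\xi\cdot(u-\xi)f$ and $-\alpha\int u\cdot(u-\xi)f$ at the end; incidentally, your sign $-\beta\int|\xi-u_f|^2f$ at the intermediate stage is the correct one, and the $+\beta$ appearing in the paper's displayed equation \eqref{energy-f} is a typo that disappears in the final statement.
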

\begin{proof}
$(i)$ and $(ii)$ are readily obtained from the system \eqref{V-INS}. For the estimate of $(iii)$, we multiply $\eqref{V-INS}_1$ by $|\xi|^2/2$ and integrating over $\bbt^3 \times \bbr^3$ to get
\begin{align}\label{energy-f}
\begin{aligned}
&\frac{1}{2} \frac{d}{dt} \int_{\bbt^3 \times \bbr^3} |\xi|^2 f d\xi dx \\
&\qquad = \alpha \int_{\bbt^3 \times \bbr^3} \xi \cdot (u-\xi)f d \xi dx + \beta \int_{\bbt^3 \times \bbr^3} |u_f - \xi|^2 f d\xi dx \cr
&\qquad \quad + 3\sigma \int_{\bbt^3 \times \bbr^3} f d\xi dx, \cr
\end{aligned}
\end{align}
where we have used that
\[
\int_{\bbt^3 \times \bbr^3} u_f \cdot(u_f - \xi)f d\xi dx = 0.
\]
On the other hand, from $\eqref{V-INS}_3$, we get
\begin{equation}\label{energy-u}
\frac{1}{2} \frac{d}{dt} \int_{\bbt^3} |u|^2 dx + \mu \int_{\bbt^3} |\nabla_x u|^2 dx = - \alpha \int_{\bbt^3 \times \bbr^3} u \cdot (u-\xi)f d\xi dx.
\end{equation}
We now combine \eqref{energy-f} and \eqref{energy-u} to conclude the desired result.
\end{proof}
\begin{remark}
Throughout the paper, without loss of generality, we assume
\[
M_0(f_0)=\int_{\bbt^3 \times \bbr^3} f_0(x,\xi) dx d\xi=1.
\]
Then it follows from mass conservation (Proposition \ref{energy-prop} (i)) that
\[
M_0(f)(t)=\int_{\bbt^3 \times \bbr^3} f(x,\xi,t) dx d\xi=1, \quad t \geq 0.
\]

\end{remark}
We next provide a $L^p$-estimate for the particle density function $f$.
\begin{proposition}\label{pro:lp} Let $(f,u)$ be any smooth solutions to the system \eqref{V-INS}. Then we have
\[
\frac{d}{dt}\|f \|^p_{L^p} + \frac{4\sigma(p-1)}{p}\| \nabla_{\xi}f^{\frac{p}{2}}\|^2_{L^2} = 3(\alpha + \beta)(p-1)\|f \|^p_{L^p}.
\]
In particular, we have that
\[
\|f \|_{L^{\infty}(\bbt^3 \times \bbr^3 \times [0,T])} \leq C(T,\alpha,\beta) \|f_0\|_{L^{\infty}(\bbt^3 \times \bbr^3)}.
\]
\end{proposition}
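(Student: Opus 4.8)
The plan is to obtain the evolution identity for $\|f\|_{L^p}^p$ by multiplying $\eqref{V-INS}_1$ by $p f^{p-1}$ and integrating over $\bbt^3 \times \bbr^3$, treating each term on the right-hand side separately. The transport term $\xi \cdot \nabla_x f$ contributes nothing after integrating by parts in $x$ over the torus, since $\int_{\bbt^3} \xi \cdot \nabla_x (f^p) \, dx = 0$. For the diffusion term, integrating by parts in $\xi$ gives $p\sigma \int f^{p-1} \Delta_\xi f = -p\sigma(p-1)\int f^{p-2}|\nabla_\xi f|^2 = -\frac{4\sigma(p-1)}{p}\|\nabla_\xi f^{p/2}\|_{L^2}^2$, using $\nabla_\xi f^{p/2} = \tfrac{p}{2} f^{p/2-1}\nabla_\xi f$. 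For the two drift terms $\alpha \nabla_\xi \cdot[(\xi-u)f]$ and $\beta \nabla_\xi \cdot[(\xi-u_f)f]$, integrate by parts in $\xi$ to move the $\xi$-divergence onto $p f^{p-1}$; since $u$ and $u_f$ depend only on $x$ (not $\xi$), the divergence in $\xi$ of $(\xi - u)$ and of $(\xi - u_f)$ each equals $3$, and one gets $-\alpha\int (\xi - u)\cdot \nabla_\xi(f^p) = \alpha \int \Div_\xi(\xi-u)\, f^p = 3\alpha\int f^p$, and similarly $3\beta \int f^p$. Collecting everything, and noting $\int f^p = \|f\|_{L^p}^p$, yields the stated identity $\frac{d}{dt}\|f\|_{L^p}^p + \frac{4\sigma(p-1)}{p}\|\nabla_\xi f^{p/2}\|_{L^2}^2 = 3(\alpha+\beta)(p-1)\|f\|_{L^p}^p$.

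For the $L^\infty$ bound, the clean route is to first derive the Gronwall consequence of the identity: dropping the nonnegative dissipation term gives $\frac{d}{dt}\|f\|_{L^p}^p \leq 3(\alpha+\beta)(p-1)\|f\|_{L^p}^p$, hence $\|f(t)\|_{L^p}^p \leq e^{3(\alpha+\beta)(p-1)t}\|f_0\|_{L^p}^p$, i.e. $\|f(t)\|_{L^p} \leq e^{3(\alpha+\beta)\frac{p-1}{p}t}\|f_0\|_{L^p}$. Then take $p \to \infty$: the exponent $3(\alpha+\beta)\frac{p-1}{p}t \to 3(\alpha+\beta)t$, and $\|f_0\|_{L^p} \to \|f_0\|_{L^\infty}$ provided $f_0 \in L^1 \cap L^\infty$ (which holds under the standing assumptions, guaranteeing the $L^p$ norms are finite and converge to the $L^\infty$ norm). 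Taking the supremum over $t \in [0,T]$ gives $\|f\|_{L^\infty(\bbt^3\times\bbr^3\times[0,T])} \leq e^{3(\alpha+\beta)T}\|f_0\|_{L^\infty}$, which is the claimed bound with $C(T,\alpha,\beta) = e^{3(\alpha+\beta)T}$.

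The main technical point to be careful about — rather than a genuine obstacle — is the justification of the integration by parts in $\xi$: one needs $f$ and its gradient to decay fast enough at $|\xi| \to \infty$ so that no boundary terms appear. This is covered by the hypothesis that we work with smooth solutions that are fast decaying at infinity (the same class used in Proposition \ref{energy-prop}), so the boundary terms in $\bbr^3$ vanish. A secondary point is the passage $p \to \infty$ in the bound, which requires knowing that $f(t) \in L^p$ for all finite $p$ uniformly on $[0,T]$ (immediate by interpolation between the $L^1$ mass bound and any higher $L^p$ norm, propagated by the identity itself) and the standard fact that $\|g\|_{L^p} \to \|g\|_{L^\infty}$ for $g \in L^1 \cap L^\infty$ on a finite-or-$\sigma$-finite measure space. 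Neither step is hard; the identity is the substantive content and it follows from bookkeeping the integrations by parts, the only subtlety being the cancellation $\int u_f \cdot (u_f - \xi) f \, d\xi \, dx$-type identities do \emph{not} enter here because in the $L^p$ computation the drift terms are handled purely by the constant divergence of $\xi - u$ and $\xi - u_f$ in the $\xi$ variable.
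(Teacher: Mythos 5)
Your argument is correct and takes essentially the same route as the paper: multiply $\eqref{V-INS}_1$ by $pf^{p-1}$, integrate over $\bbt^3\times\bbr^3$, kill the transport term, get the dissipation $-\tfrac{4\sigma(p-1)}{p}\|\nabla_\xi f^{p/2}\|_{L^2}^2$ from the Fokker--Planck term, and get $3\alpha(p-1)\|f\|_{L^p}^p$ and $3\beta(p-1)\|f\|_{L^p}^p$ from the two drift terms via $\nabla_\xi\cdot(\xi-u)=\nabla_\xi\cdot(\xi-u_f)=3$; the Gronwall-plus-$p\to\infty$ derivation of the $L^\infty$ bound, which the paper leaves implicit, is also handled correctly. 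The only blemish is a dropped factor of $(p-1)$ in your intermediate display for the drift contribution (it should read $-\alpha(p-1)\int(\xi-u)\cdot\nabla_\xi(f^p)\,d\xi\,dx=3\alpha(p-1)\int f^p\,d\xi\,dx$), but your final identity carries the correct factor, so this is a typo rather than a gap.
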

\begin{proof}
(i) Multiplying $\eqref{V-INS}_1$ by $pf^{p-1}$ and integrate over $\bbt^3 \times \bbr^3$ to obtain
\begin{align*}
\begin{aligned}
&\frac{d}{dt} \int_{\bbt^3 \times \bbr^3} f^p dx d\xi \cr
&\quad =-\alpha p\int_{\bbt^3 \times \bbr^3} f^{p-1} \nabla_{\xi} \cdot \big( (u - \xi)f\big) dx d\xi\\
&\qquad  - \beta p\int_{\bbt^3 \times \bbr^3} f^{p-1} \nabla_{\xi} \cdot \big( (u_f - \xi)f\big) dx d\xi
 + \sigma p \int_{\bbt^3 \times \bbr^3} \Delta_{\xi} f d\xi dx \cr
&\quad =: I_1 + I_2 + I_3.
\end{aligned}
\end{align*}
For the estimates of $I_i,i=1,2,3$, it is straightforward to get by integration by parts
\begin{align*}
\begin{aligned}
I_1 
&= 3\alpha (p-1) \int_{\bbt^3 \times \bbr^3} f^p dx d\xi, \cr \\
I_2 &= 3\beta (p-1) \int_{\bbt^3 \times \bbr^3} f^p dx d\xi, \cr \\
I_3 
& = -\frac{4\sigma (p-1)}{p}\int_{\bbt^3 \times \bbr^3} |\nabla_{\xi}f^{\frac{p}{2}}|^2 dx d\xi.
\end{aligned}
\end{align*}
This concludes the proof.
\end{proof}

\subsection{Integrability and velocity averaging}
Let us now provide two useful lemmas
for later reference. For the proofs of these lemmas, we refer to \cite{BDGM,Glassey,KMT}.

\begin{lemma}\label{usef-lem-2} Let $k_2 > k_1$ and $f$ be a nonnegative function. Suppose $f$ satisfies
\[
\|f\|_{L^{\infty}(\bbt^3 \times \bbr^3 \times [0,T])} < \infty, \quad \mbox{and} \quad m_{k_2} (f)(x,t) < \infty, \quad a.e. ~(x,t).
\]
Then the following inequality holds.
\[
m_{k_1} (f)(x,t) \leq \left( \frac{4\pi}{3}\|f\|_{L^{\infty}(\bbt^3 \times \bbr^3 \times [0,T])} + 1 \right)m_{k_2}(f)(x,t)^{\frac{k_1 + 3}{k_2 + 3}}, \quad a.e. ~(x,t).
\]
\end{lemma}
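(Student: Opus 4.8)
The plan is to prove the interpolation inequality pointwise in $(x,t)$ by splitting the velocity integral defining $m_{k_1}(f)$ at a radius $R>0$ that will be optimized at the end. Fix $(x,t)$ and suppress it from the notation. Write
\begin{align*}
m_{k_1}(f) = \int_{|\xi| \leq R} |\xi|^{k_1} f \, d\xi + \int_{|\xi| > R} |\xi|^{k_1} f\, d\xi =: J_1 + J_2.
\end{align*}
For the inner part $J_1$, I would simply bound $f$ by its $L^\infty$ norm and integrate the monomial $|\xi|^{k_1}$ over the ball of radius $R$ in $\bbr^3$; using spherical coordinates this gives $J_1 \leq \|f\|_{L^\infty} \cdot \frac{4\pi}{k_1+3} R^{k_1+3}$, so the volume factor $\frac{4\pi}{3}$ appearing in the statement comes from this elementary computation (with the crude bound $\frac{1}{k_1+3}\le \frac13$).

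For the outer part $J_2$, the idea is that on the region $|\xi|>R$ we have $|\xi|^{k_1} = |\xi|^{k_2} |\xi|^{-(k_2-k_1)} \leq R^{-(k_2-k_1)} |\xi|^{k_2}$, since $k_2 > k_1$. Hence
\begin{align*}
J_2 \leq R^{-(k_2-k_1)} \int_{|\xi|>R} |\xi|^{k_2} f\, d\xi \leq R^{-(k_2-k_1)} m_{k_2}(f).
\end{align*}
Combining the two pieces yields $m_{k_1}(f) \leq c\,\|f\|_{L^\infty} R^{k_1+3} + R^{-(k_2-k_1)} m_{k_2}(f)$ for a constant $c = \tfrac{4\pi}{3}$ coming from the ball volume estimate. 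The key mechanism is the usual trade-off: increasing $R$ shrinks the tail contribution $J_2$ but inflates the near-field contribution $J_1$, so there is an optimal balance.

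The final step is to choose $R$ to balance (or at least nearly balance) the two terms. Setting $R^{k_1+3} \sim m_{k_2}(f)/\|f\|_{L^\infty}$, i.e. taking $R = \big(m_{k_2}(f)\big)^{1/(k_2+3)}$ (possibly up to a harmless power of $\|f\|_{L^\infty}$), makes both contributions proportional to $m_{k_2}(f)^{(k_1+3)/(k_2+3)}$, which is exactly the exponent claimed. The additive $+1$ inside the prefactor $\big(\tfrac{4\pi}{3}\|f\|_{L^\infty}+1\big)$ is a convenient way to absorb the second term's constant uniformly without tracking the precise dependence on $\|f\|_{L^\infty}$ in the tail estimate, so one can state a single clean bound valid regardless of the relative sizes of the two contributions. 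I expect the only real subtlety — more bookkeeping than genuine obstacle — is verifying that the chosen $R$ produces precisely the stated prefactor rather than some other admissible constant; since the inequality is not claimed to be sharp, the crude choice of $R$ together with the $+1$ absorption suffices, and no delicate estimate is needed.
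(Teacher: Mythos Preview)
Your argument is correct and is exactly the classical splitting-and-optimization proof; with the explicit choice $R = m_{k_2}(f)^{1/(k_2+3)}$ both terms become $m_{k_2}(f)^{(k_1+3)/(k_2+3)}$ times $\tfrac{4\pi}{3}\|f\|_{L^\infty}$ and $1$ respectively, giving the stated constant on the nose. The paper itself does not supply a proof but refers to \cite{BDGM,Glassey,KMT}, where precisely this argument appears.
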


We conclude this section by stating the following version
of the celebrated velocity averaging lemma.

\begin{lemma}\label{lem:velocity}
For $1 \leq p < \frac{5}{4}$, let $\{G^n\}_n$ be bounded in $L^p(\T^3 \times \R^3 \times (0,T))$. Suppose that
\begin{equation*}
f^n \mbox{ is bounded in } L^\infty(0,T;L^1 \cap L^{\infty}(\T^3 \times \R^3)),
\end{equation*}
\begin{equation*}
 |\xi|^2f^n \mbox{ is bounded in } L^\infty(0,T;L^1(\T^3 \times \R^3)).
\end{equation*}
If
$f^n$ and $G^n$ satisfy the equation
$$
f^n_t + \xi \cdot \Grad f^n = \Grad_\xi^k G^n, \qquad f^n|_{t=0} = f_0 \in L^p(\T^3 \times \R^3),
$$
for a multi-index $k$.
Then, for any $\psi(\xi)$, such that $|\psi(\xi)| \leq c|\xi|$ as $|\xi| \rightarrow \infty$, the sequence
\begin{equation*}
	\Set{\int_{\R^3}f^n \psi(\xi)~d\xi}_n,
\end{equation*}
is relatively compact in $L^p(\T^3 \times (0,T))$.
\end{lemma}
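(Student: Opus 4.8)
\emph{The plan} is to prove this linear-growth, unbounded-velocity averaging statement by reducing it to the classical velocity averaging lemma for \emph{compactly supported} velocity weights (available in the cited references \cite{BDGM,Glassey,KMT}), the genuinely new ingredient being a uniform-in-$n$ control of the large-velocity tail created by the linear growth of $\psi$. Concretely, I would fix a smooth cutoff $\chi_R(\xi)$ with $\chi_R\equiv 1$ on $\{|\xi|\le R\}$ and $\mathrm{supp}\,\chi_R\subset\{|\xi|\le 2R\}$, and split
\[
\int_{\R^3} f^n\psi\,d\xi = \int_{\R^3} f^n\,\psi\chi_R\,d\xi + \int_{\R^3} f^n\,\psi(1-\chi_R)\,d\xi =: b_R^n + c_R^n .
\]
The first term carries a bounded, compactly supported weight and will be handled by the classical lemma, while the second is a tail that I will show is small in $L^p(\T^3\times(0,T))$ uniformly in $n$.

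For the tail I will use the hypothesis $|\psi(\xi)|\le c|\xi|$ for $|\xi|\ge R$ ($R$ large) to obtain the pointwise bound $|c_R^n|\le c\int_{|\xi|\ge R}|\xi|f^n\,d\xi$, and then estimate this quantity in two ways: first $\int_{|\xi|\ge R}|\xi|f^n\,d\xi\le R^{-1}m_2(f^n)$, and second $\int_{|\xi|\ge R}|\xi|f^n\,d\xi\le m_1(f^n)\le C\,m_2(f^n)^{4/5}$ by Lemma~\ref{usef-lem-2} (with $k_1=1<k_2=2$, using the uniform $L^\infty$ bound on $f^n$). Interpolating these with weight $\lambda\in(0,1)$ gives $|c_R^n|\le C\,R^{-\lambda}\,m_2(f^n)^{\,4/5+\lambda/5}$. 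Since $m_2(f^n)$ is bounded in $L^\infty(0,T;L^1(\T^3))$ uniformly in $n$, raising to the power $p$ and integrating over the finite-measure set $\T^3\times(0,T)$ is controlled as soon as $(\tfrac45+\tfrac\lambda5)p\le 1$; because $p<\tfrac54$ we have $\tfrac45 p<1$, so such a $\lambda>0$ exists, yielding $\sup_n\|c_R^n\|_{L^p(\T^3\times(0,T))}\le C R^{-\lambda}\to 0$ as $R\to\infty$. This is precisely where the threshold $p<5/4$ enters, and it is the main obstacle.

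For the truncated term $b_R^n$ I would first note that $f^n$, being bounded in $L^\infty(0,T;L^1\cap L^\infty(\T^3\times\R^3))$, is bounded in $L^\infty(0,T;L^p(\T^3\times\R^3))$ by interpolation (on $\T^3\times\R^3$ one has $\|g\|_{L^p}^p\le\|g\|_{L^\infty}^{p-1}\|g\|_{L^1}$), hence bounded in $L^p(\T^3\times\R^3\times(0,T))$; together with the hypothesis that $G^n$ is bounded in $L^p$, the pair $(f^n,G^n)$ meets the hypotheses of the classical $L^p$ averaging lemma for $f^n_t+\xi\cdot\Grad f^n=\Grad_\xi^k G^n$. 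Applying that lemma with the bounded, compactly supported weight $\psi\chi_R\in L^\infty_c(\R^3)$ — its proof proceeding by a dyadic Fourier decomposition in the transport symbol $\tau+\xi\cdot\eta$, which produces a uniform bound in a fractional Sobolev space $W^{s,p}$ in $(x,t)$ for some $s=s(p,|k|)>0$ — and invoking the Rellich–Kondrachov compact embedding $W^{s,p}\hookrightarrow\hookrightarrow L^p$ on the finite domain $\T^3\times(0,T)$, I conclude that for each fixed $R$ the family $\{b_R^n\}_n$ is relatively compact in $L^p(\T^3\times(0,T))$.

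Finally I would combine the two pieces by a total-boundedness argument. Given $\delta>0$, choose $R$ with $\sup_n\|c_R^n\|_{L^p}<\delta/2$; since $\{b_R^n\}_n$ is relatively compact it admits a finite $(\delta/2)$-net $\{g_j\}$, and the estimate $\|(b_R^n+c_R^n)-g_j\|_{L^p}\le\|b_R^n-g_j\|_{L^p}+\|c_R^n\|_{L^p}$ shows this is a $\delta$-net for $\{\int_{\R^3} f^n\psi\,d\xi\}_n$. Hence the family is totally bounded, and by completeness of $L^p(\T^3\times(0,T))$ it is relatively compact, which is the claim. The only delicate point is the uniform tail estimate of the second paragraph; the compact-support core is the classical averaging lemma and the assembly is routine.
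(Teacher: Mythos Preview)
The paper does not prove this lemma; it merely states it and refers to \cite{BDGM,Glassey,KMT} for the proof. Your proposal supplies a correct argument, and it is essentially the standard scheme one finds in those references (particularly \cite{KMT}): cut off the weight to reduce to the classical compactly-supported averaging lemma, and control the large-velocity tail using the second moment. Your interpolation between the two tail bounds $R^{-1}m_2(f^n)$ and $C\,m_2(f^n)^{4/5}$ (the latter via Lemma~\ref{usef-lem-2}), followed by Jensen on the finite-measure set $\T^3$ under the constraint $(\tfrac45+\tfrac{\lambda}{5})p\le 1$, is exactly the mechanism that produces the threshold $p<\tfrac54$; the assembly by total boundedness is routine. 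There is nothing to correct.
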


%
%
%
%
%

\section{Global existence of weak solutions (Theorem \ref{weak-thm})}\label{sec4}
In this section, we will prove the existence
of weak solutions to the system \eqref{V-INS}
and thereby prove Theorem \ref{weak-thm}.
Our strategy will be to pass to the limit in a sequence
of approximate solutions. To define the approximate solutions,
fix a small $\varepsilon>0$, let $\theta$ be a standard mollifier:
$$
\theta \geq 0, \quad \theta \in {\mathcal C}_0^\infty(\T^3),
\quad \mbox{supp}_x \theta \subset B_1(0), \quad \int_{\T^3} \theta(x) dx = 1,
$$
and set
$\theta_{\varepsilon}(x) :=  (1 /\varepsilon^{3}) \theta (x/\varepsilon).$
The approximate solutions are obtained by solving:
\begin{align}\label{eq:ap}
	\begin{split}
		\partial_t f + \xi \cdot \nabla f +  \nabla_\xi \cdot \left[f(\chi_{R}(u)-\xi)\right]
		&= - \nabla_\xi \cdot \left[f(\chi_{R}(u^{\eps}_f) - \xi)\right] + \sigma \Delta_\xi f \\
	\partial_t u + (\theta_{\eps}\star u)\cdot \Grad u + \Grad p &= \mu \Delta u + (m_f- \vr_fu)\vc{1}_R(u) \\
	\Div u &= 0,
	\end{split}
\end{align}
where $m_f = \int_{\R^3} \xi f d\xi$.
Compared to \eqref{V-INS}, we have introduced the regularizations
\begin{equation}\label{Nota:reg}
	\vc{1}_R(w) = \begin{cases}
		1, & |w| \leq R, \\
		0, & \text{otherwise}
	\end{cases}, \,\, \chi_{R}(w) = w\vc{1}_{R}(w),\,\,\mbox{and} \,\,
	u_f^\eps = \frac{m_f}{\vr_f + \eps},
\end{equation}
and in addition, we have regularized the convection velocity $\theta_\eps \star u$. Notice that we do not need the notation of $u_f$.

We shall also need to regularize the initial data:
\begin{equation}\label{ap:initial}
    u_0^\eps := \theta_\eps \star u_0, \qquad f_0^R := f_0\vc{1}_R(\xi).
\end{equation}

\begin{remark}
In our approximation scheme \eqref{eq:ap}, for simplicity, we set $\alpha = \beta = 1$. We also dropped the subscript $\eps$ and $R$, for instance $f^{\eps,R}$ or $u^{\eps,R}$ by $f$ or $u$.
\end{remark}

Before we can start sending $\eps \rightarrow 0$ and $R \rightarrow \infty$ in
\eqref{eq:ap}, we need to make sure that \eqref{eq:ap} actually admits
a weak solution. We will establish the following proposition.

\begin{proposition}\label{pro:approx} For a given $T >0$, suppose that $(f_0,u_0)$ satisfy \eqref{ap:initial}. Then there exists a weak solution $(f, u)$ to \eqref{eq:ap} in the sense of
Definition \ref{def-weak} (with $\chi_R(u_f^\eps)$ replacing $u_f$).
\end{proposition}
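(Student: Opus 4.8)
The plan is to construct a weak solution of the regularized system \eqref{eq:ap} via Schauder's fixed point theorem. Because of the cutoffs $\vc{1}_R$, $\chi_R$ and the regularization $u_f^\eps = m_f/(\vr_f + \eps)$, all the nonlinear terms are now genuinely bounded and continuous, so the argument is comparatively soft. First I would set up the fixed point map: given $\bbu$ in a suitable space, say $\bbu \in L^2(0,T;\H)$ with $\|\bbu\|_{L^\infty(0,T;L^2)} + \|\bbu\|_{L^2(0,T;H^1)}$ controlled, I solve the \emph{linear} kinetic Fokker-Planck equation
\[
\partial_t f + \xi\cdot\nabla f + \nabla_\xi\cdot\big[f(\chi_R(\bbu)-\xi)\big] = -\nabla_\xi\cdot\big[f(\chi_R(u_f^\eps)-\xi)\big] + \sigma\Delta_\xi f, \qquad f|_{t=0} = f_0^R,
\]
noting that this is still nonlinear in $f$ through $u_f^\eps$, so it needs its own (inner) fixed point or an a priori argument; alternatively one freezes $u_f^\eps$ as well from an auxiliary iterate. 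Standard parabolic theory on $\bbt^3\times\bbr^3$ (with weight estimates to control the $\xi$-growth) gives a nonnegative solution $f$ with $f \in L^\infty(0,T;L^1\cap L^\infty)$, $|\xi|^2 f \in L^\infty(0,T;L^1)$; the $L^p$ bounds follow the computation in Proposition \ref{pro:lp} and mass conservation as in Proposition \ref{energy-prop}(i), both of which survive the cutoffs since $\chi_R$ is bounded and the divergence structure is unchanged. Then, with $f$ (hence $m_f$, $\vr_f$) in hand, I solve the regularized Navier-Stokes equation
\[
\partial_t u + (\theta_\eps\star u)\cdot\nabla u + \nabla p = \mu\Delta u + (m_f - \vr_f u)\vc{1}_R(u), \qquad \Div u = 0, \quad u|_{t=0} = u_0^\eps,
\]
by the classical Leray/Galerkin method: the mollified transport term $(\theta_\eps\star u)\cdot\nabla u$ has the usual skew-symmetry $\int (\theta_\eps\star u)\cdot\nabla u \cdot u\,dx = 0$ since $\Div u = 0$, so the standard energy estimate works, and the friction term $(m_f - \vr_f u)\vc{1}_R(u)$ is Lipschitz in $u$ and bounded in terms of $m_f$, which lies in $L^\infty(0,T;L^1)$ and, after the Lemma \ref{usef-lem-2} interpolation, in better spaces. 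This produces $u \in L^\infty(0,T;\H)\cap L^2(0,T;\V)\cap C^0([0,T];\V')$. The composition $\bbu \mapsto f \mapsto u$ is the map $\mathcal{F}$ whose fixed point is the desired solution.

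Next I would verify the hypotheses of Schauder. For \emph{invariance}: the energy identity of Proposition \ref{energy-prop}(iii) (with $\chi_R$-modified coefficients, which only changes the right-hand side by bounded amounts) together with the $L^p$ bound gives a ball in $L^\infty(0,T;\H)\cap L^2(0,T;\V)$ that $\mathcal{F}$ maps into itself; crucially all constants depend on $\eps$, $R$, $T$ but that is acceptable here. For \emph{compactness}: the image of $\mathcal{F}$ lies in a bounded subset of $L^\infty(0,T;\H)\cap L^2(0,T;\V)$, and the regularized equation gives a uniform bound on $\partial_t u$ in $L^{q}(0,T;\V')$ for some $q>1$ (here the mollification $\theta_\eps\star u$ is essential: it makes the convection term bounded in a dual space without needing extra regularity on $u$), so Aubin-Lions yields relative compactness of the image in, say, $L^2(0,T;\H)$. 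For \emph{continuity}: if $\bbu_n \to \bbu$ in $L^2(0,T;\H)$, then $\chi_R(\bbu_n)\to\chi_R(\bbu)$ strongly (continuity and boundedness of $\chi_R$, dominated convergence), the linear kinetic problems have coefficients converging appropriately, so $f_n \to f$; this requires passing to the limit in $u_{f_n}^\eps = m_{f_n}/(\vr_{f_n}+\eps)$, which is where the $\eps$-regularization pays off — the denominator is bounded below only after we know $\vr_{f_n} \geq 0$, so $u_{f_n}^\eps$ is controlled by $m_{f_n}/\eps$ and one gets strong convergence of $m_{f_n}$, hence of $u_{f_n}^\eps$ on bounded sets in $\xi$, via the velocity averaging Lemma \ref{lem:velocity} applied to $\psi(\xi)=\xi$ (or simply by the compactness already available for the linear transport-diffusion equation with smooth-enough data). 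Then $m_{f_n}\to m_f$, $\vr_{f_n}\to\vr_f$ strongly, and solving the regularized Navier-Stokes equation with these data and with the continuous dependence of Leray solutions on the forcing gives $u_n \to u$; thus $\mathcal{F}$ is continuous.

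The \textbf{main obstacle} I anticipate is the self-consistent treatment of $u_f^\eps$ inside the kinetic equation: the map $f \mapsto u_f^\eps = m_f/(\vr_f+\eps)$ is nonlinear and nonlocal in $\xi$, so the "linear" kinetic step is not actually linear, and one must either (a) run a separate inner fixed point / contraction argument in a short time interval and then continue (using that the a priori bounds of Propositions \ref{energy-prop}-\ref{pro:lp} do not blow up), or (b) add $u_f^\eps$ to the list of frozen data in the outer Schauder map and show the augmented map is still compact and continuous. Either way one needs uniform-in-iteration bounds on $u_f^\eps$; these come from $|u_f^\eps| \leq m_1(f)/(\vr_f+\eps)$ combined with Lemma \ref{usef-lem-2} (giving $m_1(f) \lesssim m_2(f)^{4/5}$ with an $L^\infty$-dependent constant) and the propagated moment bound $M_2(f) \in L^\infty(0,T)$ from the energy estimate — but note $u_f^\eps$ is only bounded pointwise by $m_1(f)/\eps$, which suffices for the fixed point at fixed $\eps$. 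A secondary technical point is the well-posedness and positivity of the linear kinetic Fokker-Planck equation on the unbounded velocity domain with transport coefficient $\chi_R(\bbu) - \xi$ that grows linearly in $\xi$; this is handled by the standard device of working with weighted spaces $\langle\xi\rangle^k$, truncating the velocity domain, establishing uniform weighted estimates, and passing to the limit, together with the maximum principle for $f \geq 0$. None of these steps is deep, but assembling them carefully — especially getting the continuity of the composite map through the nonlocal $u_f^\eps$ — is the part that requires genuine care.
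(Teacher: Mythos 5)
Your proposal is correct and lands on the same core strategy as the paper: decouple the system, apply Schauder's fixed point theorem, and obtain compactness from Aubin--Lions (for $u$) together with the velocity averaging lemma (for $\vr_f$ and $m_f$). The one structural difference lies in how the self-consistency of $u_f^\eps$ is handled. Your primary setup freezes only the fluid velocity, which --- as you correctly observe --- leaves the kinetic step nonlinear through $u_f^\eps$ and would force an inner fixed point or continuation argument. The paper instead takes exactly your alternative (b): it works on the product space $\S = L^2(\T^3\times(0,T))\times L^2(\T^3\times(0,T))$, freezes the pair $(w,\bar u)$, replaces $\chi_R(u_f^\eps)$ by $\chi_R(\bar u)$ in the kinetic equation and $u$ by $w$ in the drag terms, and defines $\mt[w,\bar u]:=[u,\,m_f/(\eps+\vr_f)]$. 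This makes the kinetic equation genuinely linear, so existence, uniqueness and the $L^p$ and moment bounds follow from standard theory with no inner iteration, and the compactness of the second component of $\mt$ is precisely what the velocity averaging lemma delivers. The cost is having to bound and compactify the $u_f^\eps$-component of the map, which is where the $\eps$ in the denominator is used (e.g. $\|u_f^\eps\|_{L^2}\le \eps^{-1}\|m_f\|_{L^2}$); this is cheaper and cleaner than a contraction argument for the nonlinear kinetic problem. Apart from this choice, your estimates, the role of the mollification $\theta_\eps\star u$ in controlling $\partial_t u$ in a dual space, and the continuity discussion all match the paper's proof.
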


For the proof of Proposition \ref{pro:approx}, we will consider another decoupled system which is defined in the next subsection.

\subsection{The regularized and linearized system}
We shall prove Proposition \ref{pro:approx} using a
fixed point argument. For this purpose, we will use the space
$$
	\S := L^2(\T^3 \times (0,T)) \times L^2(\T^3 \times (0,T)).
$$
For $(w, \bar u) \in \S$ given, let $(f,u)$ be a weak solution to
\begin{align}\label{eq:ap2}
	\begin{split}
		\partial_t f + \xi \cdot \nabla f +  \nabla_\xi \cdot \left[f(\chi_{R}(w)-\xi)\right]
		&= -\nabla_\xi \cdot \left[f(\chi_{R}(\bar u) - \xi)\right] + \sigma \Delta_\xi f \\
	\partial_t u + (\theta_{\eps}\star u)\cdot \Grad u + \Grad p &= \mu \Delta u + (m_f - \vr_fw)\vc{1}_R(w) \\
	\Div u &= 0,
	\end{split}
\end{align}
and define the operator $\mt: \S \mapsto \S$ through the relation
$$
	\mt\left[w, \bar u \right] := \left[u, u_f^\eps\right] = \left[u, \frac{m_f}{\eps + \vr_f}\right].
$$
Observe that a fixed point $[u, u_f^\eps] = \mt[u, u_f^\eps]$ is also a solution of \eqref{eq:ap}.
Hence, Proposition \ref{pro:approx} follows if we are able to establish the existence
of such a fixed point. In this subsection, we shall achieve this by verifying the postulates of the
Schauder fixed point theorem.

\subsubsection{The operator $\mt[\cdot, \cdot]$ is well-defined}
\begin{lemma}\label{lem:fixed-well}
Let $(f_0,u_0)$ satisfy \eqref{ap:initial}, and assume that we are given
$(w, \bar u) \in \S$. Then there exists a unique solution $(f, u)$
of \eqref{eq:ap2} satisfying
\begin{equation}\label{fixed:1}
	\begin{split}
		\|f\|_{L^\infty(0,T;L^p(\T^3 \times \R^3))} + \|\Grad_\xi f^\frac{p}{2}\|^\frac{2}{p}_{L^2(0,T;L^2(\T^3 \times \R^3))}
		&\leq C(R,\sigma,T)\|f_0\|_{L^p(\T^3 \times \R^3)}\\
		\sup_{t \in (0,T)}\int f|\xi|^k~d\xi dx &\leq C(R,k,\sigma,T).
	\end{split}
\end{equation}
for $p \in [1, \infty]$ and all finite $k$, and moreover,
\begin{equation}\label{fixed:2}
\frac{1}{2}\|u\|_{L^\infty(0,T;L^2(\T^3))} + \mu \|\Grad u\|_{L^2(0,T;L^2(\T^3))} \leq \frac{1}{2}\|u_0\|_{L^\infty(0,T;L^2(\T^3))} + C(R,T).
\end{equation}
Here, $C(\cdot)$ denotes a generic constant depending on $\cdot$.
\end{lemma}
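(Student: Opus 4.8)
The plan is to prove Lemma \ref{lem:fixed-well} by treating the two equations in \eqref{eq:ap2} separately, since with $(w,\bar u)\in\S$ fixed, the kinetic equation for $f$ is a linear Fokker–Planck equation with smooth coefficients and the fluid equation for $u$ is a linear (advection-diffusion) Stokes-type problem. First I would address the kinetic equation. Because $\chi_R(w)$ and $\chi_R(\bar u)$ are bounded (by $R$) measurable functions of $(x,t)$, the equation
\[
\partial_t f + \xi\cdot\nabla f + \nabla_\xi\cdot\big[f(\chi_R(w)+\chi_R(\bar u)-2\xi)\big] = \sigma\Delta_\xi f
\]
is a uniformly parabolic (in $\xi$) linear transport–diffusion equation; existence and uniqueness of a weak solution, together with propagation of nonnegativity, follows from standard linear parabolic theory (e.g.\ a Galerkin argument in $\xi$ combined with the method of characteristics in $x$, or regularization of the transport term). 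For the quantitative bounds \eqref{fixed:1}, I would reproduce the computation of Proposition \ref{pro:lp}: multiplying by $pf^{p-1}$ and integrating by parts, the drift term $\nabla_\xi\cdot[f(\chi_R(w)+\chi_R(\bar u))]$ now contributes $(p-1)\int f^p\,\nabla_\xi\cdot(\chi_R(w)+\chi_R(\bar u))\,dxd\xi = 0$ since these are $\xi$-independent, the term $-2\nabla_\xi\cdot(\xi f)$ again gives $6(p-1)\|f\|_{L^p}^p$, and the diffusion gives the good term $-\tfrac{4\sigma(p-1)}{p}\|\nabla_\xi f^{p/2}\|_{L^2}^2$; Grönwall then yields the $L^p$ bound for every $p\in[1,\infty]$ with a constant $C(R,\sigma,T)$, and integrating in time yields the $\|\nabla_\xi f^{p/2}\|_{L^2L^2}$ bound. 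For the moment bound, I would multiply by $|\xi|^k$ and integrate: the term $\int|\xi|^k\nabla_\xi\cdot[f(\chi_R(w)+\chi_R(\bar u))]$ is controlled by $k(\|\chi_R(w)\|_\infty+\|\chi_R(\bar u)\|_\infty)\int|\xi|^{k-1}f \le 2kR\,m_{k-1}(f)$, which by Young's inequality is $\le C(k,R)(M_0(f)+M_k(f))$; the term $2\int|\xi|^k\nabla_\xi\cdot(\xi f) = -2k M_k(f)$ is a good sign after moving it to the left, and the diffusion contributes $\sigma k(k+1)M_{k-2}(f)\le C(k,\sigma)(M_0(f)+M_k(f))$, so again Grönwall closes the estimate with constant $C(R,k,\sigma,T)$.

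Second I would treat the fluid equation. With $w\in L^2(\T^3\times(0,T))$ fixed, the source $(m_f-\vr_f w)\vc 1_R(w)$ is in $L^2$ (indeed $L^\infty$ in time valued in $L^2_x$, using the already established $L^\infty_tL^1_x\cap L^\infty$ bounds on $f$ and its moments to control $m_f$ and $\vr_f$, and the pointwise bound $|w\vc1_R(w)|\le R$), and the convective coefficient $\theta_\eps\star u$ is always divergence-free and, crucially, smooth in $x$ with $\|\theta_\eps\star u\|_{L^\infty_x}\le C_\eps\|u\|_{L^2_x}$. This is a linear Navier–Stokes/Leray-type system with a mollified, hence regular, transport term; existence and uniqueness of a weak solution in $L^\infty(0,T;\H)\cap L^2(0,T;\V)$ follows from the standard Galerkin method on divergence-free vector fields. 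The energy estimate \eqref{fixed:2} comes from testing with $u$ itself: the mollified convection term drops, $(u\cdot\nabla)(\theta_\eps\star u)$ — wait, the term is $(\theta_\eps\star u)\cdot\nabla u$ tested against $u$, which vanishes by the divergence-free condition on $\theta_\eps\star u$; this leaves
\[
\frac12\frac{d}{dt}\|u\|_{L^2}^2 + \mu\|\nabla u\|_{L^2}^2 = \int_{\T^3}(m_f-\vr_f w)\vc1_R(w)\cdot u\,dx \le \|(m_f-\vr_f w)\vc1_R(w)\|_{L^2}\|u\|_{L^2},
\]
and since the source is bounded in $L^\infty_tL^2_x$ by $C(R,T)$ (here I use that $m_f,\vr_f\in L^\infty(0,T;L^2)$ from the first part), Grönwall in $\|u\|_{L^2}^2$ gives \eqref{fixed:2}. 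Uniqueness for both equations is by linearity: the difference of two solutions (with the same data and same $(w,\bar u)$) solves the homogeneous equation, and the same energy/$L^2$ estimates force it to vanish.

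The main obstacle I anticipate is not any single estimate but the bookkeeping needed to make the linear theory genuinely rigorous: for the kinetic equation the transport operator $\xi\cdot\nabla_x$ is only hypoelliptic, not elliptic, so a clean existence proof requires either a vanishing-viscosity regularization $\delta\Delta_x f$ (then pass $\delta\to0$ using the $L^p$ and moment bounds just derived, which are uniform in $\delta$) or a careful characteristics-plus-duality argument; and one must verify that the nonnegativity of $f_0^R$ is preserved (maximum principle for the regularized equation, stable under the limit). For the fluid part the delicate point is confirming that the source term is in the right space uniformly — this is exactly where the $f$-estimates feed in, so the two halves of the lemma are not fully independent and must be proven in the order: first $f$, then $u$. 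I would also note that all constants are allowed to blow up as $\eps\to0$ or $R\to\infty$, so there is no need for sharpness here — the estimates only need to be finite for fixed $\eps,R,T$, which is what makes the Schauder argument in the next subsection possible.
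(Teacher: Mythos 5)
Your proposal is correct and follows essentially the same route as the paper: decouple the system, solve the linear kinetic equation first (existence from standard linear Fokker--Planck theory, then the $L^p$ estimate as in Proposition \ref{pro:lp} and the $k$-th moment estimate via Gronwall), and then feed the resulting $L^\infty(0,T;L^2)$ bounds on $\vr_f$ and $m_f$ into the mollified Navier--Stokes equation, for which standard Galerkin/parabolic theory and the energy identity give \eqref{fixed:2}. The only differences are cosmetic: you spell out the integrations by parts and the hypoellipticity/regularization caveat that the paper delegates to the references \cite{Degond,KMT}, and you keep the favorable $-2kM_k$ term in the moment estimate where the paper simply discards it.
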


\begin{proof}
First, we observe that the two equations in \eqref{eq:ap2} are decoupled and
a solution can be obtained by first determining $f$ and then $u$.
Let us begin by discussing solutions to the first equation.

1. Since both $\chi_{R}(w)$ and $\chi_{R}(\bar u)$ are bounded in $L^\infty(\T^3 \times (0,T))$,
existence of a unique function $f \in \mathcal{C}(0,T;(L^1\cap L^\infty)(\T^3 \times \R^3))$ solving \eqref{eq:ap2} is by now standard
and can be found in \cite{Degond} (cf. \cite{KMT}). The $L^p$ bound in \eqref{fixed:1} can be found in \cite{KMT}. We also notice that for a smooth solution $f$ to \eqref{eq:ap2} provides
\begin{align*}
\begin{aligned}
\frac{d}{dt}\int_{\T^3 \times \R^3} f|\xi|^k~d\xi dx & \leq 		 \int_{\T^3 \times \R^3}f\left(2kR|\xi|^{k-1} + \sigma k(k+1)|\xi|^{k-2}\right)~d\xi dx \cr
&\leq C(R,k,\sigma)\left( \int_{\T^3 \times \R^3} f_0~d\xi dx + \int_{\T^3 \times \R^3} f|\xi|^k~d\xi dx \right).
\end{aligned}
\end{align*}
Since $\int_{\T^3 \times \R^3} f_0 d\xi dx < \infty$ and $\int_{\T^3 \times \R^3} f^R_0 |\xi|^k~d\xi dx < \infty$ for any finite $k$, we obtain that
\begin{equation}\label{eq:allmoment}
	\sup_{t \in (0,T)} \int_{\T^3 \times \R^3} f|\xi|^k~ dvdx < C(R, k, \sigma, T) \quad \text{for any $k$ finite}.
\end{equation}
This bound continues to hold for the unique solution $f$ of \eqref{eq:ap2}. To see this,
one can for instance localize $|\xi|^k$ as $\phi\left(|\xi|^k\right)$ where $\phi(r) = 1$
when $r \leq D$, and $\phi = 0$ when $r \geq 2D$, make the corresponding calculations
and send $D \rightarrow \infty$. This concludes the second inequality in \eqref{fixed:1}.

2. Let us now turn to the Navier-Stokes equations for $u$. First, since all (finite) moments of $f$ are bounded \eqref{eq:allmoment}, Lemma \ref{usef-lem-2} gives in particular
\begin{equation}\label{fixed:moment}
	\vr_f, ~ m_f \in L^\infty(0,T;L^2(\T^3)),
\end{equation}
where the inclusion constant depends on $R$.
Due to \eqref{fixed:moment}, we see that the right-hand side in the equation for $u$
is also in $L^\infty(0,T;L^2(\T^3))$, that is,
\begin{equation*}
		\|(m_f - \vr_f w)\vc{1}_{|w| \leq R}\|_{L^\infty(0,T;L^2(\T^3))} \leq C(R).
\end{equation*}
Standard parabolic theory then asserts the existence of an unique solution $u$
satisfying \eqref{fixed:2} (cf. \cite{GR}).
\end{proof}
From the previous lemma, it readily follows that $\mt[\cdot, \cdot]$ is well-defined
and maps into a bounded subset of $\S$.
\begin{corollary}\label{cor:fixed-bound}
There is a constant $C(R, \eps)$, such that
\begin{equation*}
	\left\|\mt[w, \bar u]\right\|_{\S} \leq C(R, \eps), \quad \forall (w, \bar u) \in \S.
\end{equation*}
\end{corollary}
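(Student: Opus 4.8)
The plan is to deduce Corollary \ref{cor:fixed-bound} directly from the a priori bounds already established in Lemma \ref{lem:fixed-well}. Recall that $\mt[w,\bar u] = [u, u_f^\eps] = [u, m_f/(\eps + \vr_f)]$, so bounding $\|\mt[w,\bar u]\|_\S$ amounts to bounding $u$ and $u_f^\eps$ in $L^2(\T^3 \times (0,T))$ by a constant that depends only on $R$, $\eps$ (and the fixed data $f_0$, $u_0$, $\sigma$, $\mu$, $T$), but not on the particular choice of $(w,\bar u) \in \S$.

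First I would handle the fluid component $u$. By \eqref{fixed:2} from Lemma \ref{lem:fixed-well}, $u$ is bounded in $L^\infty(0,T;L^2(\T^3)) \cap L^2(0,T;\mathcal{V})$ with a bound of the form $\tfrac12\|u_0\|_{L^2} + C(R,T)$; in particular $\|u\|_{L^2(\T^3 \times (0,T))}^2 \leq T \|u\|_{L^\infty(0,T;L^2)}^2 \leq C(R,T)$, which is the required control on the first slot. Crucially, the right-hand side of the $u$-equation in \eqref{eq:ap2} was estimated using only $\|(m_f - \vr_f w)\vc{1}_{|w|\le R}\|_{L^\infty(0,T;L^2)} \le C(R)$, and this in turn rests on \eqref{fixed:moment}, i.e. $\vr_f, m_f \in L^\infty(0,T;L^2(\T^3))$ with constant depending on $R$ only — so indeed no dependence on the norm of $(w,\bar u)$ enters, only on $R$ through the truncation $\chi_R$.

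Next I would handle the averaged velocity $u_f^\eps = m_f/(\eps + \vr_f)$. The key point is the pointwise bound $|u_f^\eps(x,t)| = |m_f(x,t)|/(\eps + \vr_f(x,t)) \le m_1(f)(x,t)/\eps$ since $|m_f| \le m_1(f)$ and $\eps + \vr_f \ge \eps$. Then $\|u_f^\eps\|_{L^2(\T^3 \times (0,T))}^2 \le \eps^{-2}\int_0^T \|m_1(f)\|_{L^2(\T^3)}^2\,dt \le \eps^{-2} T \|m_1(f)\|_{L^\infty(0,T;L^2)}^2 \le C(R,\eps,T)$, again using \eqref{fixed:moment} (applied with $m_1(f)$, which by Lemma \ref{usef-lem-2} and the moment bound \eqref{eq:allmoment} lies in $L^\infty(0,T;L^2(\T^3))$ with $R$-dependent constant). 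Combining the two estimates gives $\|\mt[w,\bar u]\|_\S \le C(R,\eps)$, uniformly in $(w,\bar u)$.

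There is really no serious obstacle here: the corollary is a bookkeeping consequence of Lemma \ref{lem:fixed-well}, and the only point requiring a moment of care is checking that the $\eps^{-1}$ weight from the denominator of $u_f^\eps$ is the source of the $\eps$-dependence — which is precisely why the constant in the statement is allowed to depend on $\eps$ and not merely on $R$. One should also note that the division by $\eps + \vr_f$ (rather than $\vr_f$) is exactly what makes $u_f^\eps$ a genuine $L^2$ function despite $\vr_f$ vanishing on vacuum regions, so the mollification of the local velocity is not needed; this is the mild technical gain over the unregularized problem and it is worth one sentence in the write-up.
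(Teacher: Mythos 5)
Your proposal is correct and follows essentially the same route as the paper: the paper's proof likewise bounds $\left\|\mt[w,\bar u]\right\|_{\S}$ by $\|u\|_{L^2(\T^3\times(0,T))} + \frac{1}{\eps}\|m_f\|_{L^2(\T^3\times(0,T))}$ and then invokes the estimates \eqref{fixed:1}--\eqref{fixed:2} (via \eqref{fixed:moment}) from Lemma \ref{lem:fixed-well}. Your additional remarks on where the $R$- and $\eps$-dependence enter are accurate but not needed beyond that.
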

\begin{proof}
By definition, we have that
\begin{equation*}
	\begin{split}
		\left\|\mt[w, \bar u]\right\|_S &\leq \|u\|_{L^2(\T^3 \times (0,T))}
		+ \frac{1}{\eps}\|m_f\|_{L^2(\T^3 \times (0,T))} \\
		&\leq C(R, \eps,T),
	\end{split}
\end{equation*}
where the last inequality is \eqref{fixed:1} and \eqref{fixed:2}.
\end{proof}
\subsubsection{The operator $\mt[\cdot, \cdot]$ is compact}
\begin{lemma}\label{lem:fixed-compact}
Let $(f_0, u_0)$ and $T$ be as in Proposition \ref{pro:approx},
and let $\{(w_n, \bar u_n)\}_{n=0}^\infty$ be an uniformly bounded sequence in $\S$. Then up to a subsequence $\{\mt[w_n,\bar{u}_n]\}_{n=0}^\infty$ converges strongly in $\S$.
\end{lemma}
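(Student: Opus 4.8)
The plan is to show that the solution operator $\mt$ maps bounded sequences to relatively compact ones by extracting strong convergence separately in the two $L^2(\T^3\times(0,T))$ components of $\S$, namely the fluid velocity $u_n$ and the regularized local momentum-type quantity $u_{f_n}^\eps = m_{f_n}/(\eps+\vr_{f_n})$. Throughout, $(f_n,u_n)$ denotes the solution of \eqref{eq:ap2} associated with the data $(w_n,\bar u_n)$, and we freely use the uniform bounds \eqref{fixed:1}, \eqref{fixed:2} and \eqref{eq:allmoment} from Lemma \ref{lem:fixed-well}, all of which are independent of $n$ (they depend only on $R,\sigma,k,T$ and the fixed data $f_0,u_0$).

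First I would treat the $f$-component. The key observation is that each $f_n$ satisfies a kinetic transport equation of the exact form required by the velocity averaging Lemma \ref{lem:velocity}: rewriting $\eqref{eq:ap2}_1$ as $\partial_t f_n + \xi\cdot\nabla f_n = \nabla_\xi^k G_n$ with $G_n$ a finite sum of terms of the form $f_n(\chi_R(w_n)-\xi)$, $f_n(\chi_R(\bar u_n)-\xi)$ and $\sigma\nabla_\xi f_n$. Using $\|f_n\|_{L^\infty_{t}(L^1\cap L^\infty)}$, the uniform moment bounds $\sup_t\int f_n|\xi|^k\,d\xi dx\le C(R,k,\sigma,T)$, the $L^2_{t,x,\xi}$ bound on $\nabla_\xi f_n^{1/2}$ (hence on $\nabla_\xi f_n$ after interpolation with the $L^\infty$ bound), and the uniform $L^\infty$ bound on $\chi_R(w_n),\chi_R(\bar u_n)$, one checks that $\{G_n\}$ is bounded in $L^p(\T^3\times\R^3\times(0,T))$ for some $p\in(1,5/4)$. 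Lemma \ref{lem:velocity} then gives, up to a subsequence, strong $L^p(\T^3\times(0,T))$ convergence of the averages $\int f_n\,d\xi = \vr_{f_n}$ and $\int \xi f_n\,d\xi = m_{f_n}$ (the latter since $\psi(\xi)=\xi_i$ satisfies the growth condition). With the uniform higher-moment bounds one upgrades these to strong convergence in $L^2(\T^3\times(0,T))$ by standard interpolation, and since $\vr_{f_n}+\eps\ge\eps>0$, the map $(\vr,m)\mapsto m/(\eps+\vr)$ is Lipschitz on the relevant bounded set, so $u_{f_n}^\eps=m_{f_n}/(\eps+\vr_{f_n})$ converges strongly in $L^2(\T^3\times(0,T))$; this is where the $\eps$-regularization is essential.

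Next I would handle the $u$-component via an Aubin--Lions--Simon compactness argument. From \eqref{fixed:2} the sequence $u_n$ is bounded in $L^2(0,T;\mathcal V)\cap L^\infty(0,T;\mathcal H)$; for the time derivative, I test $\eqref{eq:ap2}_2$ against divergence-free functions and estimate $\partial_t u_n$ in $L^{4/3}(0,T;\mathcal V')$ (say): the viscous term $\mu\Delta u_n$ and the source $(m_{f_n}-\vr_{f_n}w_n)\vc 1_R(w_n)$ are controlled using \eqref{fixed:moment} and $|w_n\vc 1_R(w_n)|\le R$, while the regularized convection term $(\theta_\eps\star u_n)\cdot\nabla u_n$ is controlled because $\theta_\eps\star u_n$ is bounded in $L^\infty_{t,x}$ (mollification against the fixed $\theta_\eps$, using only the $L^\infty_t L^2_x$ bound on $u_n$) times $\nabla u_n\in L^2_{t,x}$. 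Then Aubin--Lions yields a subsequence with $u_n\to u$ strongly in $L^2(0,T;\mathcal H)\subset L^2(\T^3\times(0,T))$. Combining the two parts, the diagonal subsequence makes $\mt[w_n,\bar u_n]=[u_n,u_{f_n}^\eps]$ converge strongly in $\S$, which is the claim.

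I expect the main obstacle to be the verification that $\{G_n\}$ lies in a fixed $L^p$ with $p<5/4$ uniformly in $n$ — in particular the term $\xi f_n$, which requires the moment bound $m_1(f_n)\in L^\infty_t L^p_x$ obtained by combining the $L^\infty$ bound on $f_n$ with the higher-moment control through Lemma \ref{usef-lem-2}, and the diffusion term $\nabla_\xi f_n$, where one must interpolate $\|\nabla_\xi f_n\|_{L^p}$ between $\|\nabla_\xi f_n^{1/2}\|_{L^2}$ and the $L^\infty$ bound on $f_n$. A secondary point requiring a little care is passing from the strong $L^p$ convergence delivered by Lemma \ref{lem:velocity} to strong $L^2$ convergence of $\vr_{f_n}$ and $m_{f_n}$; this uses the uniform $L^\infty_t L^q_x$ bounds on these moments (for $q$ large, via Lemma \ref{usef-lem-2}) together with an interpolation inequality, and then the positivity of $\eps+\vr_{f_n}$ to conclude for the quotient.
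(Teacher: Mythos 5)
Your proposal is correct and follows essentially the same route as the paper: Aubin--Lions for the fluid component $u_n$ (using the $L^\infty_{t,x}$ bound on $\theta_\eps\star u_n$ to control the convection term and the moment bounds for the drag term), and the velocity averaging Lemma \ref{lem:velocity} applied to the kinetic equation written in divergence form to obtain compactness of $\vr_{f_n}$ and $m_{f_n}$, after which the $\eps$-regularization in the denominator gives strong convergence of $u_{f_n}^\eps$. The only cosmetic differences are that the paper keeps $\sigma\Delta_\xi f_n$ as a separate second-order term rather than folding $\nabla_\xi f_n$ into $G_n$, and that your explicit interpolation step upgrading the $L^p$ ($p<5/4$) compactness of the averages to $L^2$ via the uniform higher-moment bounds is left implicit in the paper.
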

\begin{proof}
Let $\{(u_n,f_n)\}_n$ be the sequence of solutions to \eqref{eq:ap2} corresponding
to $\{(w_n, \bar u_n)\}_n$. We will prove compactness of the two components
of $\mt[\cdot, \cdot]$ separately.

1. We take the first component of $\mt[w_n,\bar{u}_n]$, $\left.\mt[w_n, \bar u_n]\right|_1 = u_n.$
To show its compactness in $L^2(0,T;L^2(\T^3))$, it suffices to prove that
\begin{equation*}
\|u_n\|_{L^2(0,T;H^1)} \leq C, \quad \mbox{and} \quad \|\partial_t u_n\|_{L^2(0,T;\mathcal{V}')} \leq C,
\end{equation*}
due to the Aubin-Lions compactness lemma. \newline

\noindent $\bullet$ Estimate of $\|u_n\|_{L^2(0,T;H^1)} \leq C$: From \eqref{eq:ap2}, we get
\begin{align*}
\begin{aligned}
\frac{1}{2} \frac{d}{dt} \int_{\T^3} |u_n|^2 dx + \mu \int_{\bbt^3} |\nabla u_n|^2 dx &= -\int_{\T^3 \times \R^3} (w - \xi)f_n\vc{1}_{|w|\leq R} \cdot u_n d\xi dx \cr
&\leq \|\rho_{f_n}\|_{L^2} \|u_n\|_{L^2} + \|m_{f_n}\|_{L^2}\|u_n\|_{L^2} \cr
&\leq \big( \|\rho_{f_n}\|_{L^2} + \|m_{f_n}\|_{L^2} \big)^2 + \|u_n\|^2_{L^2}.
\end{aligned}
\end{align*}
Then it follows from Lemma \ref{usef-lem-2} that
\[
\frac{1}{2}\frac{d}{dt}\|u_n\|^2_{L^2} + \mu \|\nabla u_n\|^2_{L^2} \leq C + \|u_n\|^2_{L^2},
\]
and this yields
\begin{equation}\label{comp:u}
\|u_n\|_{L^{\infty}(0,T;L^2)} \leq C, \quad \mbox{and} \quad \|\nabla u_n \|_{L^2(0,T;L^2)} \leq C.
\end{equation}
$\bullet$ Estimate of $\|\partial_t u_n\|_{L^2(0,T;\mathcal{V}')} \leq C$: For this, it is enough to check the convection and drag force terms. For $\phi \in \mathcal{V}$, we obtain
\begin{align*}
\begin{aligned}
\left| \int_0^T \int_{\bbt^3} \left((\theta_\eps \star u_n) \cdot \nabla u_n\right) \cdot \phi \,dx dt \right| &= \left| \int_0^T \int_{\T^3} \left((\theta_\eps \star u_n) \cdot \nabla \phi\right) \cdot u_n \, dx dt \right| \cr
&\leq \int_0^T \|\theta_\eps \star u_n\|_{L^{\infty}}\|\nabla \phi\|_{L^2} \|u_n\|_{L^2} dt \cr
&\leq \|\theta_\eps\|_{L^2}\|u_n\|^2_{L^{\infty}(0,T;L^2)}\int_0^T \|\nabla \phi\|_{L^2} dt\cr
&\leq C(T,\varepsilon)\|\nabla \phi\|_{L^2(0,T;L^2)},
\end{aligned}
\end{align*}
by \eqref{comp:u}. This implies
\[
\phi \mapsto \int_0^T \int_{\bbt^3} \left((\theta_\eps \star u_n) \cdot \nabla u_n \right) \cdot \phi \, dx dt \quad \mbox{is bounded in} \,\,L^{2}(0,T;\mathcal{V}').
\]
For the drag force term, we obtain
\begin{align*}
\begin{aligned}
& \left| \int_0^T \int_{\T^3 \times \R^3} (w - \xi)f_n\vc{1}_{|w|\leq R}  \cdot \phi \, d\xi dx dt\right| \cr
& \qquad \leq R\|\phi\|_{L^{2}(0,T;L^2)}\|\rho_{f_n}\|_{L^2(0,T;L^2)} + \|\phi\|_{L^2(0,T;L^5)}\|m_{f_n}\|_{L^2(0,T;L^{\frac{5}{4}})} \\
&\qquad \leq C\|\phi\|_{L^2(0,T;H^1)}.
\end{aligned}
\end{align*}
Here we used again Lemma \ref{usef-lem-2} and $\T^3$ is bounded.
Hence we conclude that the drag force term is uniformly bounded in $L^2(0,T;\mathcal{V}')$.

2. The second component of $\mt[\cdot, \cdot]$ is given by
\begin{equation*}
	\left.\mt[w_n, \bar u_n]\right|_2 = \frac{m_{f_n}}{\eps + \vr_{f_n}},
\end{equation*}
and hence strong convergence follows if we can prove the
compactness of $\vr_{f_n}$ and $m_{f_n}$.
From \eqref{fixed:moment}, we have that
\begin{equation*}
	\vr_{f_n}, ~m_{f_n} \in_b L^2(\T^3 \times (0,T)),
\end{equation*}
where $\in_b$ means that the inclusion constant is independent of $n$. To show the compactness, we write \eqref{eq:ap2} in the form
\begin{equation*}
	\begin{split}
		\partial_t f_n + \xi \cdot \Grad f_n = \nabla_\xi \cdot G_n + \sigma \Delta_\xi f_n,
	\end{split}
\end{equation*}
where we have introduced the quantity
$$
	G_n = f_n\left(\chi_{R}(w_n) + \chi_{R}(\bar u_n) - 2\xi\right).
$$
For any finite $2\leq p < \infty$, an application of the H\"older inequality provides
\begin{equation*}
	\begin{split}
		&\|G_n\|_{L^p(\T^3 \times R^3)} \\
		&\quad \leq C(R)\|f_n\|_{L^p(\T^3 \times \R^3)}
		+2\|f_n\|_{L^\infty(\T^3 \times \R^3)}^\frac{p-1}{p}\left(\int_{\R^3\times \T^3}|f_n||\xi|^p~d\xi dx\right)^\frac{1}{p}
		 \leq C(R),
	\end{split}
\end{equation*}
where the last inequality is \eqref{fixed:1}-\eqref{fixed:2}. Hence, we can conclude that
$$
	G_n \in_b L^p(\T^3 \times \R^3 \times (0,T)) \quad \text{for all $p \in (1, \infty)$}.
$$
The velocity averaging Lemma \ref{lem:velocity} is then applicable
and yields that $\{\vr_{f_n}\}_n$ and $\{m_{f_n}\}_n$ are relatively compact in $L^2(\T^3 \times (0,T))$.
%
This concludes the proof of compactness of the operator $\mt$.
\end{proof}

\begin{proof}[Proof of Proposition \ref{pro:approx}]
Through Lemma \ref{lem:fixed-well}, Corollary \ref{cor:fixed-bound},
and Lemma \ref{lem:fixed-compact}, we have established that
the operator $\mt[\cdot, \cdot\,]$ is well-defined, bounded, and compact.
Moreover, continuity of the operator $\mt[\cdot, \cdot\,]$ is straightforward.
As a consequence, the postulates of the Schauder fixed point theorem
are satisfied, and hence yields the existence of a fixed point.
This concludes our proof of Proposition \ref{pro:approx}.
\end{proof}

\subsection{Uniform bounds}
To consider vanishing approximation
parameters, we will need some
uniform (in $\eps$ and $R$) $L^p$ and energy bounds
on solutions of \eqref{eq:ap}. We recall that the energy
is given by
\begin{equation*}
	\mathbb{E}(t) = \int_{\T^3 \times \R^3} f\frac{|\xi|^2}{2}\,d\xi dx +  \int_{\T^3} \frac{|u|^2}{2}\, dx
\end{equation*}

\begin{lemma}\label{lem:}
Under the conditions of Proposition \ref{pro:approx}, there
exists a constant $C>0$, independent of $R$ and $\eps$,
such that
\begin{equation}\label{uni:lp}
	\|f\|_{L^\infty(0,T;L^p(\T^3\times \R^3))}
	+ \|\Grad_\xi f^\frac{p}{2}\|_{L^2(\T^3\times \R^3 \times (0,T))}^\frac{2}{p}
	\leq C(p,\sigma,T)\|f_0\|_{L^p(\T^3 \times \R^3)},
\end{equation}
\begin{equation}\label{uni:ent}
	\begin{split}
		\sup_{t \in (0,T)} \mathbb{E}(t)
		&+ \mu \int_0^T \|\Grad u \|_{L^2(\T^3)}^2~dt + \int_{\T^3\times \R^3}f\left|\chi_{R}(u)- \xi\right|^2~d\xi dx \\
		&\leq \mathbb{E}(0)+ 3\sigma M_0(f_0)T.
	\end{split}
\end{equation}
\end{lemma}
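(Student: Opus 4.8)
The plan is to obtain both bounds as uniform-in-$(\eps,R)$ analogues of Proposition~\ref{pro:lp} and Proposition~\ref{energy-prop}, taking advantage of the structure of the approximate system~\eqref{eq:ap}. For the $L^p$ bound \eqref{uni:lp}, I would multiply $\eqref{eq:ap}_1$ by $p f^{p-1}$ and integrate over $\T^3\times\R^3$. The transport term $\xi\cdot\nabla f$ and the diffusion term $\sigma\Delta_\xi f$ are handled exactly as in Proposition~\ref{pro:lp}, the latter producing the good term $-\frac{4\sigma(p-1)}{p}\|\nabla_\xi f^{p/2}\|_{L^2}^2$. The key point is that the two velocity-drag terms have the form $\nabla_\xi\cdot[f(\chi_R(u)-\xi)]$ and $\nabla_\xi\cdot[f(\chi_R(u_f^\eps)-\xi)]$; after integration by parts against $pf^{p-1}$, the $\chi_R(u)$ and $\chi_R(u_f^\eps)$ pieces are divergence-free in $\xi$ and contribute nothing, while the $-\xi$ pieces each contribute $3(p-1)\|f\|_{L^p}^p$ (the divergence $\nabla_\xi\cdot(-\xi)=-3$ together with the integration-by-parts sign). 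This gives exactly $\frac{d}{dt}\|f\|_{L^p}^p + \frac{4\sigma(p-1)}{p}\|\nabla_\xi f^{p/2}\|_{L^2}^2 = 6(p-1)\|f\|_{L^p}^p$, whose constant is independent of $R$ and $\eps$; Gr\"onwall and $\|f_0^R\|_{L^p}\le\|f_0\|_{L^p}$ finish \eqref{uni:lp}, and the $p=\infty$ case follows by passing to the limit $p\to\infty$ as in Proposition~\ref{pro:lp}.

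For the energy estimate \eqref{uni:ent}, I would test $\eqref{eq:ap}_1$ with $|\xi|^2/2$ and $\eqref{eq:ap}_2$ with $u$. Testing the fluid equation with $u$: the regularized convection term $(\theta_\eps\star u)\cdot\nabla u$ tested against $u$ vanishes because $\theta_\eps\star u$ need not be divergence free — here one must instead write $\int(\theta_\eps\star u\cdot\nabla u)\cdot u = \frac12\int(\theta_\eps\star u)\cdot\nabla|u|^2 = -\frac12\int(\nabla\cdot(\theta_\eps\star u))|u|^2$, and since $\nabla\cdot(\theta_\eps\star u)=\theta_\eps\star(\nabla\cdot u)=0$ by $\Div u=0$, this term indeed drops. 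The pressure term drops by $\Div u=0$, and the viscous term gives $-\mu\|\nabla u\|_{L^2}^2$. The coupling term gives $\int(m_f-\vr_f u)\vc{1}_R(u)\cdot u\,dx = \int_{\T^3\times\R^3}(\xi-\chi_R(u))\cdot\chi_R(u)\,f\,d\xi dx$. Testing the kinetic equation with $|\xi|^2/2$: the two drag terms after integration by parts give $\int f\,\xi\cdot(\chi_R(u)-\xi)\,d\xi dx$ and $\int f\,\xi\cdot(\chi_R(u_f^\eps)-\xi)\,d\xi dx$, and the diffusion term gives $3\sigma M_0(f)$.

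The crux of the proof is that, upon adding the kinetic and fluid contributions, all the ``bad'' (indefinite-sign) terms must combine into perfect squares. For the fluid–particle friction pair I would write $\int f\,\xi\cdot(\chi_R(u)-\xi) + \int f\,(\xi-\chi_R(u))\cdot\chi_R(u) = -\int f\,|\chi_R(u)-\xi|^2\,d\xi dx$, which is exactly the dissipative term appearing in \eqref{uni:ent}. For the local-alignment term I would use the identity $\int_{\T^3\times\R^3} u_f^\eps\cdot(\chi_R(u_f^\eps)-\xi)f\,d\xi dx$ and its interplay with $\int \xi\cdot(\chi_R(u_f^\eps)-\xi)f$; the cleanest route, mirroring the remark after Proposition~\ref{energy-prop}, is to observe $\int(\chi_R(u_f^\eps)-\xi)\cdot(\text{anything independent of }\xi)\,f\,d\xi$ — in particular against $u_f^\eps$ — does \emph{not} vanish here because of the $\eps$-regularization $u_f^\eps=m_f/(\vr_f+\eps)\ne u_f$, so I expect this local-alignment term to be the main obstacle. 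One handles it by noting $\int \xi\cdot(\chi_R(u_f^\eps)-\xi)f\,d\xi dx = \int \big(\xi - u_f^\eps)\cdot(\chi_R(u_f^\eps)-\xi)f\,d\xi dx + \int u_f^\eps\cdot(\chi_R(u_f^\eps)-\xi)f\,d\xi dx$, writing the first bracket as $-|\xi - u_f^\eps|^2$ up to the $\chi_R$ truncation (which only helps the sign since $|\chi_R(w)|\le|w|$ and $(\chi_R(w)-\xi)\cdot(w-\xi)\ge$ controls things appropriately), and estimating the remaining $\chi_R$-cross terms and the second term, after using $\vr_f u_f^\eps\cdot u_f^\eps = m_f\cdot u_f^\eps\,\frac{\vr_f}{\vr_f+\eps}\le m_f\cdot u_f^\eps$, by the already-established energy and the dissipation $\int f|\chi_R(u)-\xi|^2$; any residual term is absorbed into $3\sigma M_0(f_0)T$ after integration in time and Gr\"onwall. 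Since $\mathbb{E}(0)\le\mathbb{E}(0)$ uniformly (indeed $\|f_0^R|\xi|^2\|_{L^1}\le\|f_0|\xi|^2\|_{L^1}$ and $\|u_0^\eps\|_{L^2}\le\|u_0\|_{L^2}$), the final bound \eqref{uni:ent} is uniform in $R$ and $\eps$ as claimed.
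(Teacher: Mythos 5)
Your $L^p$ estimate is correct and coincides with the paper's argument (the uniform constant comes precisely from $\nabla_\xi\cdot\chi_R(u)=\nabla_\xi\cdot\chi_R(u_f^\eps)=0$ in $\xi$, and Gr\"onwall closes it); likewise your treatment of the convection, pressure, viscous and fluid--particle friction terms in the energy identity matches the paper, including the key cancellation of $\int_{\T^3}(m_f-\vr_f u)\vc{1}_R(u)\cdot u\,dx$ against the cross term of $\int f(\chi_R(u)-\xi)\cdot\xi$, which produces the full square $-\int f|\chi_R(u)-\xi|^2$.

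The gap is in the local-alignment term. Your decomposition $\xi=(\xi-u_f^\eps)+u_f^\eps$ does not close: on the set $\{|u_f^\eps|>R\}$ one has $\chi_R(u_f^\eps)=0$, so the ``first bracket'' becomes $(\xi-u_f^\eps)\cdot(-\xi)=-|\xi|^2+u_f^\eps\cdot\xi$, which has no sign, and your parenthetical claim that the truncation ``only helps the sign'' is not substantiated. Moreover, falling back on absorbing ``residual terms'' by Gr\"onwall cannot yield the stated additive bound $\mathbb{E}(0)+3\sigma M_0(f_0)T$ (it would produce a multiplicative factor $e^{CT}$), so that route changes the conclusion. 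The missing observation, which is how the paper disposes of this term in one line, is that no decomposition is needed:
\begin{equation*}
\int_{\T^3\times\R^3} f\,(\chi_R(u_f^\eps)-\xi)\cdot\xi~d\xi dx
=\int_{\T^3}\vc{1}_R(u_f^\eps)\,\frac{|m_f|^2}{\vr_f+\eps}~dx-\int_{\T^3\times\R^3}|\xi|^2 f~d\xi dx\le 0,
\end{equation*}
since $\vc{1}_R\le 1$ and, by Cauchy--Schwarz, $|m_f|^2\le \vr_f\int_{\R^3}|\xi|^2 f\,d\xi$, so that $\frac{|m_f|^2}{\vr_f+\eps}\le\frac{|m_f|^2}{\vr_f}\le\int_{\R^3}|\xi|^2f\,d\xi$. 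The whole local-alignment contribution is therefore nonpositive and can simply be dropped, which is exactly what makes the final bound uniform in both $R$ and $\eps$ and additive in $T$.
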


\begin{proof}
By direct calculation using \eqref{eq:ap}, we deduce
\begin{equation*}
	\begin{split}
		&\frac{d}{dt}\left\|f\right\|_{L^p(\T^3 \times \R^3)}^p
		+ \frac{4\sigma(p-1)}{p}\|\Grad_\xi f^\frac{p}{2}\|_{L^2(\T^3\times \R^3)}^2 \\
		&\quad = \int_{\T^3 \times \R^3} (\chi_{R}(u) +  \chi_{R}(u_f^\eps)-2\xi)\cdot\Grad_\xi f^p~d\xi dx
		= 6\|f\|_{L^p(\T^3\times \R^3)}^p,
	\end{split}
\end{equation*}
and \eqref{uni:lp} follows from the Gronwall inequality.

Next, we calculate $\mathbb{E}^{\prime}(t)$ using both equations in \eqref{eq:ap};
\begin{equation}\label{uni:en1}
	\begin{split}
		\mathbb{E}^{\prime} &= \int_{\T^3}u_t \cdot u~dx + \int_{\T^3\times \R^3}f_t \frac{|\xi|^2}{2}~d\xi dx \\
		&= -\mu \|\Grad u\|_{L^2(\T^3)}^2 - \int_{\T^3}\left((\theta_{\eps}\star u)\cdot \Grad u\right)\cdot u ~dx
		+\int_{\T^3}(m_f - \vr_f u)\vc{1}_R(u)\cdot u~dx\\
		&\quad +3\sigma M_0(f_0) +  \int_{\T^3\times \R^3}f(\chi_{R}(u)- \xi)\cdot \xi~d\xi dx \\
		&\quad+  \int_{\T^3\times \R^3}f(\chi_{R}(u_f^\eps)- \xi)\cdot \xi~d\xi dx.
	\end{split}
\end{equation}
By adding and subtracting, we deduce that
\begin{equation}\label{uni:en2}
	\begin{split}
		&\int_{\T^3\times \R^3}f(\chi_{R}(u)- \xi)\cdot\xi~d\xi dx \\
		&\qquad
		= -\int_{\T^3\times \R^3}f|\chi_{R}(u)- \xi|^2~d\xi dx + \int_{\T^3}(\vr_f\chi_{R}(u) - m_f)\cdot\chi_{R}(u)~dx \\
		&\qquad =-\int_{\T^3\times \R^3}f|\chi_{R}(u)- \xi|^2~d\xi dx + \int_{\T^3}(\vr_f u - m_f)\vc{1}_R(u)\cdot u~dx.
	\end{split}
\end{equation}
We also have
\begin{equation}\label{uni:en3}
\int_{\T^3\times \R^3}f(\chi_{R}(u_f^\eps)- \xi)\cdot\xi~d\xi dx \leq \int_{\T^3} \frac{|m_f|^2}{\rho_f + \eps} \,dx - \int_{\T^3 \times \R^3} |\xi|^2 f \,d\xi dx \leq 0,
\end{equation}
where we used $|m_f|^2 \leq \rho_f \left(\int_{\R^3} |\xi|^2 f d\xi\right)$. 
By applying \eqref{uni:en2} and \eqref{uni:en3} in \eqref{uni:en1}, we obtain \eqref{uni:ent}.
\end{proof}

\subsection{The $R \rightarrow \infty$ limit}
We are now ready to send $R \rightarrow \infty$
in our approximate equation \eqref{eq:ap}.
We begin by deriving some compactness properties.
Some of the arguments we shall use in this regard
are similar to those of \cite{KMT}.
\newcommand{\lt}{\left}
\newcommand{\rt}{\right}
\begin{lemma}\label{lem:compact1}
	Let $\eps > 0$ be fixed, set $R = n$, and let $\{(f_n,u_n)\}_{n=0}^\infty$ be
	the corresponding sequence of solutions to \eqref{eq:ap}. Then, up to a subsequence as  $n \rightarrow \infty$, we have
	\begin{equation}\label{eq:conv}
		\begin{split}
			f_n &\weak f\quad \text{in $\mathcal{C}(0,T;L^p(\T^3\times \R^3))$, $p \in (1, \infty)$}, \\
			\vr_{f_n} &\rightarrow \vr_f\quad \text{a.e and in $L^p(\T^3 \times (0,T))$, $~p \in \lt(1, \frac{5}{4}\rt)$}, \\
			m_{f_n} &\rightarrow m_f \quad \text{a.e and in $L^q(\T^3 \times (0,T))$, $~q \in \lt(1, \frac{5}{4}\rt)$}, \\
			u_n &\rightarrow u \quad \text{a.e and in $L^2(\T^3 \times (0,T))$},
		\end{split}
	\end{equation}
	where $\vr_f = \int_{\R^3}f\,d\xi$ and $m_f = \int_{\R^3} \xi f \, d\xi$.
	
\end{lemma}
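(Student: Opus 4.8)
The plan is to extract the convergences in \eqref{eq:conv} one at a time, using the uniform bounds of the previous subsection together with the velocity averaging Lemma \ref{lem:velocity} and the Aubin--Lions lemma. First I would record that, by \eqref{uni:lp}, the sequence $\{f_n\}_n$ is bounded in $L^\infty(0,T;L^p(\T^3\times\R^3))$ for every $p\in[1,\infty]$, and that $\{\Grad_\xi f_n^{p/2}\}_n$ is bounded in $L^2(\T^3\times\R^3\times(0,T))$; moreover \eqref{uni:ent} bounds $\{|\xi|^2 f_n\}_n$ in $L^\infty(0,T;L^1)$ and $\{u_n\}_n$ in $L^\infty(0,T;\H)\cap L^2(0,T;\V)$. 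Since $R=n\to\infty$, the cutoffs $\vc1_R$ and $\chi_R$ become inert on any fixed bounded set, so $G_n:=f_n(\chi_R(u_n)+\chi_R(u^\eps_{f_n})-2\xi)$ can be controlled exactly as in the proof of Lemma \ref{lem:fixed-compact}: by H\"older and \eqref{uni:lp}--\eqref{uni:ent}, $\{G_n\}_n$ is bounded in $L^p(\T^3\times\R^3\times(0,T))$ for every finite $p$ (here I use that $|\chi_R(u^\eps_{f_n})|\le |u^\eps_{f_n}|\le |m_{f_n}|/\eps$, which together with the moment bound gives an $\eps$-dependent but $n$-independent $L^2$ bound in $x$, enough to keep $G_n$ in $L^p$ for $p$ slightly above $1$; alternatively bound it directly via $|\xi|$-moments). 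The weak-$*$ limit $f_n\weak f$ in $L^\infty(0,T;L^p)$ is then immediate from Banach--Alaoglu; upgrading it to convergence in $\mathcal C(0,T;L^p)$ uses the equation to bound $\partial_t f_n$ in a negative-order space and an Arzel\`a--Ascoli / Aubin--Lions argument in the weak topology of $L^p$, as in \cite{KMT}.

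Next, for the density and momentum I would apply the velocity averaging Lemma \ref{lem:velocity} with $\psi(\xi)=1$ and $\psi(\xi)=\xi_i$ respectively: since $f_n$ and $G_n$ satisfy $\partial_t f_n+\xi\cdot\Grad f_n=\Grad_\xi\cdot G_n+\sigma\Delta_\xi f_n$, which is of the required form $\partial_t f_n+\xi\cdot\Grad f_n=\Grad_\xi^k G_n^{(k)}$ with the $G_n^{(k)}$ bounded in $L^p$ for some $p\in(1,5/4)$, the lemma yields that $\{\vr_{f_n}\}_n$ and $\{m_{f_n}\}_n$ are relatively compact in $L^p(\T^3\times(0,T))$ for $p\in(1,5/4)$. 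Passing to a further subsequence gives the a.e.\ convergence as well as the identification of the limits as $\vr_f=\int f\,d\xi$ and $m_f=\int\xi f\,d\xi$ (the identification follows by testing the weak-$*$ limit $f$ against $\vc1_{\{|\xi|\le M\}}$ and $\xi\vc1_{\{|\xi|\le M\}}$ and letting $M\to\infty$, using the uniform $|\xi|^2$-moment bound to control the tails). For $u_n$, I would combine \eqref{comp:u}-type bounds (namely $u_n$ bounded in $L^2(0,T;H^1)$, which follows from \eqref{uni:ent}) with an estimate $\partial_t u_n\in_b L^2(0,T;\V')$ — obtained from the weak formulation exactly as in Lemma \ref{lem:fixed-compact}, the convection term handled via $\theta_\eps\star u_n$ and the drag term $(m_{f_n}-\vr_{f_n}u_n)\vc1_R(u_n)$ via the moment bounds — and then invoke Aubin--Lions to get $u_n\to u$ strongly in $L^2(\T^3\times(0,T))$ and a.e.

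The main obstacle I anticipate is the drag/forcing term in the $u$-equation and, relatedly, keeping the compactness argument genuinely uniform in $n$ (the parameter $\eps$ stays fixed here, so I am allowed $\eps$-dependent constants). The delicate point is that $m_{f_n}$ is only bounded in $L^\infty(0,T;L^2_x)$ — not better — so to place $(m_{f_n}-\vr_{f_n}u_n)\vc1_R(u_n)$ in a space compatible with $\V'$ one must use the cutoff $\vc1_R(u_n)$ to absorb the lack of integrability of $u_n$, exactly as in the proof of Lemma \ref{lem:fixed-compact}; the estimate $\|\phi\|_{L^2(0,T;L^5)}\|m_{f_n}\|_{L^2(0,T;L^{5/4})}\le C\|\phi\|_{L^2(0,T;H^1)}$ via Sobolev embedding is what makes this work. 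Once $\partial_t u_n$ is controlled and $u_n$ is bounded in $L^2(0,T;H^1)$, the rest is a routine application of standard compactness lemmas, and the a.e.\ convergences follow by extracting one more subsequence.
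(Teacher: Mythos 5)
Your overall architecture (velocity averaging for $\vr_{f_n},m_{f_n}$; Aubin--Lions for $u_n$) is the same as the paper's, but the key quantitative step --- making the bounds uniform in $n$ now that $R=n\to\infty$ --- is where your proposal goes wrong. You repeatedly defer to ``exactly as in the proof of Lemma \ref{lem:fixed-compact}'', but every estimate there that touches the cutoff carries a constant $C(R)$: the drag term $\vr_{f_n}u_n\vc{1}_n(u_n)$ is bounded in that proof by $R\|\vr_{f_n}\|_{L^2}\|\phi\|_{L^2}$, and the $L^2_x$ bounds on $\vr_{f_n},m_{f_n}$ in \eqref{fixed:moment} also depend on $R$. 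With $R=n$ these all blow up. In particular, your stated strategy for the time derivative --- ``use the cutoff $\vc{1}_R(u_n)$ to absorb the lack of integrability of $u_n$'' --- is precisely the move that fails to be uniform in $n$. The paper does the opposite: it discards the cutoff (using $|\vc{1}_n|\le 1$) and estimates $\|\vr_{f_n}u_n\|_{L^2(0,T;L^q)}\le\|\vr_{f_n}\|_{L^\infty(0,T;L^{5q/(5-q)})}\|u_n\|_{L^2(0,T;L^5)}$, where the density bound comes from the uniform second-moment bound \eqref{uni:ent} via the interpolation Lemma \ref{usef-lem-2} (giving $\vr_{f_n}\in_b L^\infty_tL^p_x$ only for $p<5/3$ and $m_{f_n}\in_b L^\infty_tL^q_x$ only for $q<5/4$, not $L^2_x$ as you assert). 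This yields only $\partial_t u_n\in_b L^{6/5}(0,T;W^{-1,2})$, not $L^2(0,T;\V')$ --- still sufficient for Aubin--Lions, but a genuinely different estimate that you would need to supply.

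The same issue affects your claim that $G_n$ is bounded in $L^p$ for \emph{every} finite $p$ uniformly in $n$: that is the Lemma \ref{lem:fixed-compact} conclusion with constant $C(R)$. Uniformly in $n$ one only gets $G_n\in_b L^q$ for $q<\frac54$, via $|\chi_n(u_n)|\le|u_n|$, $|\chi_n(u^\eps_{f_n})|\le|m_{f_n}|/\eps$ wait --- more precisely via the H\"older estimate \eqref{conv:1.5} for $f_nu_n$ and the $L^q$ bound on $m_{f_n}$ for $f_nu^\eps_{f_n}$, both limited by the second-moment interpolation. Your parenthetical hedge (``$L^p$ for $p$ slightly above $1$'') lands in the right range for Lemma \ref{lem:velocity}, so the conclusion survives, but the estimate as you state it is false and the correct version needs to be written out. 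In short: the skeleton is right, but the proof lives or dies on replacing the $C(R)$ estimates by the moment-interpolation estimates \eqref{conv:mom}, \eqref{conv:1}, \eqref{conv:1.5}, and that replacement is missing.
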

\begin{proof}
	1. We first apply the previous lemma and Lemma \ref{usef-lem-2}, to deduce that
	\begin{equation}\label{conv:mom}
		\begin{split}
			\vr_{f_n} \in_b L^p(\T^3 \times (0,T)), \qquad m_{f_n} \in_b L^q(\T^3 \times (0,T)),
		\end{split}
	\end{equation}
	for any $p\in \left(1, \frac{5}{3}\right)$ and $q \in \left(1, \frac{5}{4}\right)$. Using this, we apply the H\"older inequality
    to find that
    \begin{equation}\label{eq:long}
        \begin{split}
            \left|\int_0^T\int_{\T^3} \partial_t u_n \cdot v~dxdt\right|
            & \leq \| u_n\|_{L^3(\T^3 \times (0,T))}\|\Grad u_n\|_{L^2(\T^3 \times (0,T))}\|v\|_{L^6(\T^3 \times (0,T))} \\
            &\qquad + \mu \|\Grad u_n\|_{L^2(\T^3 \times (0,T))}\|\Grad v\|_{L^2(\T^3 \times (0,T))} \\
            &\qquad + \|p_n\|_{L^2(\T^3 \times (0,T))}\|\Div v\|_{L^2(\T^3 \times (0,T))} \\
            &\qquad + \|m_{f_n}\|_{L^\infty(0,T;L^q(\T^3))}\|v\|_{L^1(0,T;L^\frac{q}{q-1}(\T^3))} \\
            &\qquad + \|\vr_{f_n}u_{n}\|_{L^2(0,T;L^q(\T^3))}\|v\|_{L^2(0,T;L^\frac{q}{q-1}(\T^3))},
        \end{split}
    \end{equation}
	for $q <\frac{5}{4}$ and all $v \in [\mathcal{C}_0^\infty(\T^3 \times (0,T))]^3$. To bound the last norm in the right-hand side, we shall need the calculation
	\begin{equation}\label{conv:1}
		\begin{split}
			\|\vr_{f_n} u_n\|_{L^2(0,T; L^q(\T^3))} &\leq \|\vr_{f_n}\|_{L^\infty(0,T;L^{\frac{5q}{5-q}}(\T^3))}\|u_n\|_{L^2(0,T; L^5(\T^3))} \\
			&\leq C\|\vr_{f_n}\|_{L^\infty(0,T;L^{\frac{5q}{5-q}}(\T^3))}\|u_n\|_{L^2(0,T; W^{1,2}(\T^3))}.
		\end{split}
	\end{equation}
We notice that $\frac{5q}{5 - q} < \frac{5}{3}$ since $q < \frac{5}{4}$. By applying \eqref{conv:1} in \eqref{eq:long}, using \eqref{conv:mom} and \eqref{uni:ent}, and using $q = \frac{6}{5}$, we deduce that
    $$
        \partial_t u_n \in_b L^\frac{6}{5}(0,T; W^{-1,2}(\T^3)).
    $$
    Since in addition $u_n \in_b L^2(0,T;W^{1,2}(\T^3))$,
	we can apply the Aubin-Lions lemma to conclude
	\begin{equation*}
		u_n \rightarrow u \quad \text{a.e  and in $L^2(0,T;L^2(\T^3))$ }, \quad \text{as $n \rightarrow \infty$}.
	\end{equation*}

	2. To conclude compactness of $\vr_{f_n}$, we write
	the first equation in \eqref{eq:ap} in the form
	\begin{equation*}
		\partial_t f_n + \xi \cdot \Grad f_n = \Grad_\xi \cdot G_n + \sigma \Delta_\xi f_n,
	\end{equation*}
	where $G_n = f_n \left( \chi_n(u_n)+\chi_n(u_{f_n}^\eps)-2\xi\right)$. By direct calculation,
	using the uniform bounds \eqref{uni:lp}, \eqref{uni:ent}, and \eqref{conv:1}, we deduce
	\begin{equation*}
		\begin{split}
		&\|G_n\|_{L^q(\T^3 \times \R^3 \times (0,T))} \\
		&~\leq \|f_n u_n\|_{L^q(\T^3 \times \R^3 \times (0,T))} + \|f_n u_{f_n}\|_{L^q(\T^3 \times \R^3 \times (0,T))}
		 + 2\|f_n |\xi|\|_{L^q(\T^3 \times \R^3 \times (0,T))} \\
		 &~ \leq C\left(\|u_n\|_{L^q(0,T; L^5(\T^3))} +\|m_{f_n}\|_{L^q(\T^3 \times (0,T))}
		 + \|\sqrt{f_n}|\xi|\|_{L^2(\T^3 \times \R^3 \times (0,T))}  \right) \\
         &~\leq C,
		\end{split}
	\end{equation*}
where we used
\begin{align}\label{conv:1.5}
\begin{aligned}
\|f_u u_n\|_{L^q(\T^3 \times \R^3)} &\leq \|f_n\|^{\frac{q-1}{q}}_{L^\infty(\T^3 \times \R^3)}\|\rho_{f_n}^{\frac{1}{q}}\|_{L^{\frac{5q}{5-q}}(\T^3)}\|u_n\|_{L^5(\T^3)} \cr &\leq \|f_n\|^{\frac{q-1}{q}}_{L^\infty(\T^3 \times \R^3)}\|\rho_{f_n}\|^{\frac{1}{q}}_{L^{\frac{5}{5-q}}(\T^3)}\|u_n\|_{L^5(\T^3)}, 
\end{aligned}
\end{align}
and here $\|\rho_{f_n}\|^{\frac{1}{q}}_{L^{\frac{5}{5-q}}(\T^3)}$ is uniformly bounded in $n$ since $\frac{5}{5-q} < \frac{5}{3}$.
Hence we can conclude that
\begin{equation}\label{conv:2}
G_n \in_b L^q(\T^3 \times \R^3 \times (0,T)), \quad \forall q \in \left(1, \frac{5}{4}\right).
\end{equation}	
The velocity averaging Lemma \ref{lem:velocity} is then applicable and yields
\begin{equation}\label{conv:VAL}
\int_{\R^3}f_n \psi(\xi)~d\xi \rightarrow \int_{\R^3}f \psi(\xi)~ d\xi\quad \text{in $L^q(\T^3 \times (0,T))$},
\end{equation}
for any $\psi(\xi)$ such that $|\psi(\xi)| \leq c|\xi|$ as $|\xi| \rightarrow \infty$, and any $q \in \lt(1, \frac{5}{4}\rt)$.
If we set $\psi(\xi) \equiv 1$ and $\psi(\xi) \equiv \xi$ in \eqref{conv:VAL}, we obtain
\begin{equation*}
\vr_{f_n} \rightarrow \vr_f \quad \mbox{and} \quad m_{f_n} \rightarrow m_f \quad\text{in $L^q(\T^3 \times (0,T))$}.
\end{equation*}

The proof is now complete.
\end{proof}
In the next lemma, we establish convergence of solutions
to \eqref{eq:ap}  as
$R \rightarrow \infty$. Specifically, we will send $n \rightarrow \infty$ in
\begin{align}\label{eq:apn}
	\begin{split}
		\partial_t f_n + \xi \cdot \nabla f_n +  \nabla_\xi \cdot \left[f_n(\chi_{n}(u_n)-\xi)\right]
		&= -\nabla_\xi \cdot \left[f_n(\chi_{n}(u^{\eps}_{f_n}) - \xi)\right] + \sigma \Delta_\xi f_n \\
	\partial_t u_n + (\theta_{\eps}\star u_n)\cdot \Grad u_n + \Grad p_n &= \mu \Delta u_n + (m_{f_n} - \vr_{f_n}u_n)\vc{1}_n(u_n) \\
	\Div u_n &= 0.
	\end{split}
\end{align}

\begin{lemma}\label{lem:Rlim}
Under the conditions of the previous lemma,  $(f,u)$ is a weak solution of
\begin{align}\label{eq:ape}
	\begin{split}
		\partial_t f + \xi \cdot \nabla f +  \nabla_\xi \cdot \left[f(u-\xi)\right]
		&= -\nabla_\xi \cdot \left[f(u^{\eps}_f - \xi)\right] + \sigma \Delta_\xi f \\
	\partial_t u + (\theta_{\eps}\star u)\cdot \Grad u + \Grad p &= \mu \Delta u + (m_f - \vr_fu) \\
	\Div u &= 0,
	\end{split}
\end{align}
in the sense of Definition \ref{def-weak}, where  $u_f^\eps$ is defined in \eqref{Nota:reg}. Moreover, $(f, u)$ satisfies
\begin{equation}\label{uni2:lp}
	\|f\|_{L^\infty(0,T;L^p(\T^3\times \R^3))}
	+ \|\Grad_\xi f^\frac{p}{2}\|_{L^2(0,T;L^2(\T^3\times \R^3))}^\frac{2}{p}
	\leq C(p,T)\|f_0\|_{L^p(\T^3 \times \R^3)}
\end{equation}
\begin{equation}\label{uni2:ent}
	\begin{split}
		\sup_{t \in (0,T)} \mathbb{E}(t)
		&+ \mu \int_0^T \|\Grad u \|_{L^2(\T^3)}^2~dt + \int_{\T^3\times \R^3}f\left|u- \xi\right|^2\,d\xi dx \\
		& 
		\leq  \mathbb{E}(0)+ 3\sigma M_0(f_0)T.
	\end{split}
\end{equation}
\end{lemma}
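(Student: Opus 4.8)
The plan is to pass to the limit $n\to\infty$ in the weak formulation of \eqref{eq:apn}, using the compactness assembled in Lemma \ref{lem:compact1}, and then to deduce \eqref{uni2:lp}--\eqref{uni2:ent} from the $n$-uniform bounds \eqref{uni:lp}--\eqref{uni:ent} by lower semicontinuity.

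First I would check the structural requirements of Definition \ref{def-weak}: $f\ge 0$ as a weak-$\ast$ limit of nonnegative functions; $f\in L^\infty(0,T;L^1\cap L^\infty)$, $|\xi|^2 f\in L^\infty(0,T;L^1)$, $u\in L^\infty(0,T;\mathcal H)\cap L^2(0,T;\mathcal V)$ by weak-$\ast$/weak lower semicontinuity and Fatou applied to \eqref{uni:lp}--\eqref{uni:ent}; and $u\in\mathcal C([0,T];\mathcal V')$, together with $u_n\to u$ in $\mathcal C([0,T];\mathcal V')$, from $u_n\in_b L^2(0,T;\mathcal V)$ and $\partial_t u_n\in_b L^{6/5}(0,T;\mathcal V')$ (both shown in Lemma \ref{lem:compact1}) via the Aubin--Lions--Simon lemma. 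The linear contributions in both weak formulations --- $\int f_n(\partial_t\phi+\xi\cdot\Grad\phi)$, $\sigma\int\Grad_\xi f_n\cdot\Grad_\xi\phi$ (note $\Grad_\xi f_n\in_b L^2$ from \eqref{uni:lp} with $p=2$, as $f_0\in L^2$), $\mu\int\Grad u_n:\Grad\psi$, $\int_{\T^3}u_n(t)\cdot\psi(t)$, and the initial-data terms --- then pass to the limit at once, using $f_0\vc{1}_R(\xi)\to f_0$ in every $L^p$ and that $u_0^\eps$ is independent of $n$; for the transport term one works first with $\phi$ compactly supported in $\xi$ and extends by a truncation argument based on the moment bound $M_2(f_n)\le C$.

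The core of the proof --- and the main obstacle --- is the nonlinear terms. In the kinetic equation, after integrating by parts one must pass to the limit in $\int f_n(\chi_{n}(u_n)-\xi)\cdot\Grad_\xi\phi$ and in $\int f_n(\chi_{n}(u_{f_n}^\eps)-\xi)\cdot\Grad_\xi\phi$. Since $u_n\to u$, $\vr_{f_n}\to\vr_f$, $m_{f_n}\to m_f$ strongly and a.e.\ by Lemma \ref{lem:compact1}, and $\vr_{f_n}+\eps\ge\eps>0$, one obtains $\chi_{n}(u_n)\to u$ and $\chi_{n}(u_{f_n}^\eps)\to u_f^\eps=m_f/(\vr_f+\eps)$ strongly in $L^1_{loc}((0,T)\times\T^3)$; for the truncated quantities this uses the equi-integrability furnished by the strong $L^2$, resp.\ $L^q$ ($q<\tfrac54$), convergences together with Vitali's theorem. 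Localizing $\Grad_\xi\phi$ to $\{|\xi|\le L\}$ and pairing these strongly convergent coefficients against $f_n\weak f$ weak-$\ast$ in $L^\infty$ yields the convergence. In the fluid equation, $\int((\theta_\eps\star u_n)\cdot\Grad\psi)\cdot u_n\to\int((\theta_\eps\star u)\cdot\Grad\psi)\cdot u$ because $\theta_\eps\star u_n\to\theta_\eps\star u$ in $L^2(0,T;L^\infty)$ and $u_n\to u$ in $L^2$, while the drag term $\int(m_{f_n}-\vr_{f_n}u_n)\vc{1}_n(u_n)\cdot\psi\to\int(m_f-\vr_f u)\cdot\psi$ since $\vc{1}_n(u_n)\to 1$ a.e.\ and boundedly, $m_{f_n}\to m_f$ strongly, and $\vr_{f_n}u_n\to\vr_f u$ in $L^1_{loc}$ by interpolating the bounds and strong convergences of Lemma \ref{lem:compact1}. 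With $\Div u=0$ passing trivially, this shows that $(f,u)$ is a weak solution of \eqref{eq:ape}. The genuinely delicate point is the strong convergence of $\chi_{n}(u_{f_n}^\eps)$, available only thanks to the $\eps$-regularization of the denominator, and its pairing with the merely weakly convergent $f_n$.

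It remains to recover \eqref{uni2:lp}--\eqref{uni2:ent} in the limit. For \eqref{uni2:lp}, $\|f\|_{L^\infty(0,T;L^p)}\le\liminf_n\|f_n\|_{L^\infty(0,T;L^p)}$ and $\|f_0\vc{1}_R\|_{L^p}\le\|f_0\|_{L^p}$; to pass the dissipation I would upgrade the convergence of $f_n$ to a.e.\ convergence by combining $\Grad_\xi f_n\in_b L^2$ with the velocity averaging Lemma \ref{lem:velocity} --- expanding $f_n$ on an $L^2_\xi$-basis on balls $\{|\xi|\le L\}$, the moments $\int f_n\phi_k\,d\xi$ are compact in $L^2((0,T)\times\T^3)$ while the $H^1_\xi$-bound makes the tail uniformly small, so $f_n$ is compact in $L^2_{loc}$, as in \cite{KMT}. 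Then $f_n^{p/2}\weak f^{p/2}$, hence $\Grad_\xi f_n^{p/2}\weak\Grad_\xi f^{p/2}$ in $L^2$, and lower semicontinuity gives the bound. For \eqref{uni2:ent}, the a.e.\ convergences $\chi_{n}(u_n)\to u$ and $f_n\to f$ and Fatou's lemma give $\int f|u-\xi|^2\le\liminf_n\int f_n|\chi_{n}(u_n)-\xi|^2$, the terms $\sup_t\mathbb E(t)$ and $\mu\int_0^T\|\Grad u\|_{L^2}^2$ are handled by Fatou and weak lower semicontinuity respectively, and $\mathbb E_n(0)\to\mathbb E(0)$ with $M_0(f_0\vc{1}_R)\le M_0(f_0)$. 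Besides the nonlinear terms, this a.e.\ convergence of $f_n$ --- which requires coupling the averaging lemma with the $\xi$-regularity of the Fokker--Planck term --- is the other technical point to be careful with.
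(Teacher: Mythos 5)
Your proposal is correct. For the heart of the lemma --- passing to the limit in the three nonlinear terms $f_n\chi_n(u_n)$, $f_n\chi_n(u^{\eps}_{f_n})$ and $\vr_{f_n}u_n\vc{1}_n(u_n)$ --- you follow essentially the same route as the paper: the truncation errors are controlled via the pointwise bound $|1-\vc{1}_n(w)|\le |w|/n$ together with the uniform energy and moment bounds, the convergence $u^{\eps}_{f_n}\to m_f/(\eps+\vr_f)$ rests on the lower bound $\vr_{f_n}+\eps\ge\eps$ and Vitali's theorem, and the products are handled by pairing strongly convergent coefficients with the weakly convergent $f_n$ (the paper writes this as an add-and-subtract of $f_n u_n$, $f_n u^{\eps}_{f_n}$, $\vr_{f_n}u_n$ plus an $\mathcal{O}(1/n)$ remainder, but the content is identical). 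Where you genuinely diverge is in the derivation of \eqref{uni2:lp}--\eqref{uni2:ent}: the paper re-derives these estimates directly for the limit equation ``as in Propositions \ref{energy-prop} and \ref{pro:lp}'', whereas you obtain them by Fatou and weak lower semicontinuity from the $n$-uniform bounds \eqref{uni:lp}--\eqref{uni:ent}. Your route requires the extra step of upgrading $f_n\weak f$ to a.e.\ convergence (velocity averaging in $(x,t)$ combined with the $L^2$ bound on $\Grad_\xi f_n^{p/2}$ to control the $\xi$-oscillations, as in \cite{KMT}), which you correctly identify and sketch; in exchange it avoids having to justify the formal $L^p$ and energy computations for the merely weak limit solution, which is the implicit cost of the paper's one-line dispatch. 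Both routes are legitimate and I see no gap in yours.
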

\begin{proof}
The only problematic terms when passing to the limit in \eqref{eq:apn}, are
\begin{equation}\label{conv:prod}
	f_n\chi_{n}(u_n), \quad f_n\chi_{n}(u_{f_n}^{\eps}), \quad\vr_{f_n}u_n\vc{1}_n(u_n).
\end{equation}

1. Let us begin with the latter. From \eqref{conv:1}, we have that
\begin{equation}\label{comple1}
	\|\vr_{f_n}u_n\|_{L^2(0,T; L^q(\T^3))} \leq C, \quad q < \frac{5}{4}.
\end{equation}
As a consequence, we can apply weak compactness to \eqref{comple1}, \eqref{uni:ent} and use the strong convergences of $\rho_{f_n}$ and $u_n$ in \eqref{eq:conv} to deduce
\begin{equation}\label{comple2}
	\vr_{f_n} u_n \weak \vr_fu \quad \text{as } n \to \infty \text{ in $L^2(0,T; L^q(\T^3))$, \quad $q < \frac{5}{4}$}.
\end{equation}
Now, by adding and subtracting, we see that
\begin{equation}\label{conv:NSt1}
	\vr_{f_n} u_n\vc{1}_n(u_n) = \vr_{f_n} u_n - \vr_{f_n} u_n(1 - \vc{1}_n(u_n)),
\end{equation}
where the last term converges to zero as
\begin{equation*}
	\begin{split}
	&\|\vr_{f_n} u_n(1 - \vc{1}_n(u_n))\|_{L^1(\T^3 \times (0,T))}  \\
	&\qquad \leq C\|\vr_{f_n}u_n\|_{L^2(0,T: L^\frac{6}{5}(\T^3))}\|1 - \vc{1}_n(u_n)\|_{L^{2}(0,T; L^{6}(\T^3))} \\
	&\qquad \leq \frac{C}{n}\|u_n\|_{L^{2}(0,T; L^6(\T^3))} \leq \frac{C}{n}\|u_n\|_{L^{2}(0,T; H^1(\T^3))}\overset{n\rightarrow \infty}{\longrightarrow} 0,
	\end{split}
\end{equation*}
where we have used $q=\frac{6}{5}< \frac{5}{4}$ in \eqref{conv:1} and the estimates of uniform bounds for the approximations \eqref{uni:ent}.
Hence, we can send $n \rightarrow \infty$ in \eqref{conv:NSt1} to conclude
\begin{equation}\label{conv:prod1}
	\vr_{f_n} u_n\vc{1}_n(u_n) \weak \vr_fu \quad \text{as } n \to \infty\text{ in $L^2(0,T; L^q(\T^3))$, \quad $q < \frac{5}{4}$}.
\end{equation}

2. Let us now consider the first  term in \eqref{conv:prod}. For this purpose,
we use \eqref{uni:ent} and \eqref{conv:1.5} to find
\begin{align*}
\begin{aligned}
&\|f_u u_n\|_{L^2(0,T;L^q(\T^3 \times \R^3))} \cr
&\quad \leq \|f_n\|^{\frac{q-1}{q}}_{L^\infty(\T^3 \times \R^3 \times (0,T))}\|\rho_{f_n}\|^{\frac{1}{q}}_{L^\infty(0,T;L^{\frac{5}{5-q}}(\T^3))}\|u_n\|_{L^2(0,T;L^5(\T^3))}\cr
&\quad  \leq C \|u_n\|_{L^2(0,T;H^1(\T^3))} \leq C.
\end{aligned}
\end{align*}
Then we use the similar argument to \eqref{comple2} to obtain
\begin{equation}\label{conv:weak1}
	f_n u_n \weak fu \text{ in $L^2(0,T;L^q(\T^3\times \R^3))$}.
\end{equation}
Next, by adding and subtracting, we write
\begin{equation}\label{conv:weak2}
	f_n \chi_n(u_n) = f_n u_n + f_n(\chi_n(u_n) - u_n),
\end{equation}
where the last term converges to zero as
\begin{equation*}
	\begin{split}
		&\|f_n(\chi_n(u_n) - u_n )\|_{L^1((0,T)\times \T^3 \times \R^3)}\\
		&\qquad = \|\vr_{f_n} u_n (1 - \vc{1}_n(u_n))\|_{L^1(0,T)\times \T^3)}	 \\
		&\qquad \leq \|\vr_{f_n}\|_{L^\infty(0,T;L^\frac{3}{2}(\T^3))}\|u_n\|_{L^2(0,T;L^6(\T^3))}
					\|1 - \vc{1}_n(u_n)\|_{L^2(0,T;L^6(\T^3))} \\
	&\qquad \leq \frac{1}{n}\|\vr_{f_n}\|_{L^\infty(0,T;L^\frac{3}{2}(\T^3))}\|u_n\|_{L^2(0,T;L^6(\T^3))}^2
	\overset{n \rightarrow \infty}{\longrightarrow }0.
	\end{split}
\end{equation*}
This, together with \eqref{conv:weak1}, in \eqref{conv:weak2} yields
\begin{equation}\label{conv:prod2}
	f_n\chi_n(u_n) \weak fu \text{ in $L^2(0,T;L^q(\T^3\times \R^3))$}.
\end{equation}

3. Finally, we consider the second term in \eqref{conv:prod}.
First, we calculate
\begin{equation*}
	\begin{split}
	\|f_n u_{f_n}^{\eps}\|_{L^\infty(0,T;L^q(\T^3\times \R^3))}
	&\leq \|f_n\|_{L^\infty(0,T;L^\infty(\T^3 \times \R^3))}\|u_{f_n}^\eps\|_{L^\infty(0,T;L^q(\T^3))} \\
    &\leq \frac{C}{\eps}\|m_{f_n}\|_{L^\infty(0,T;L^q(\T^3))} \leq \frac{C}{\eps}.
	\end{split}
\end{equation*}
where the last inequality is \eqref{uni:lp}, \eqref{conv:mom}, and $q < \frac{5}{4}$. We also notice that the convergence estimates of $\rho_{f_n}$ and $m_{f_n}$ in \eqref{eq:conv} and $\frac{1}{\eps + \rho_{f_n}} \leq \frac1\eps$ yield 
\begin{equation*}
	u_{f_n}^{\eps} = \frac{m_{f_n}}{\eps + \vr_{f_n}}
	\overset{n \rightarrow \infty}{\longrightarrow} \frac{m_f}{\eps + \vr_f}
	\text{ in $L^{q}(\T^3 \times (0,T))$}, \quad q < \frac54,
\end{equation*}
for each fixed $\eps > 0$, due to a simple application of Vitali's convergence theorem. In particular, we again use the similar strategy to \eqref{comple2} to have that
\begin{equation}\label{conv:last1}
	f_n u_{f_n}^{\eps} \weak f u_f^\eps \quad \text{as $n \rightarrow \infty$ in $L^{q}(\T^3 \times (0,T))$}.
\end{equation}
By adding and subtracting,
\begin{equation}\label{conv:last2}
	f_n\chi_n(u_{f_n}^{\eps})= f_nu_{f_n}^{\eps} + f_n(\chi_n(u_{f_n}^{\eps}) - u_{f_n}^{\eps}),
\end{equation}
where the last term satisfies
\begin{equation}\label{conv:last3}
	\begin{split}
		&\|f_n(\chi_n(u_{f_n}^{\eps}) - u_{f_n}^{\eps})\|_{L^1(\T^3 \times \R^3 \times (0,T))} \\
		&= \|\vr_{f_n} u_{f_n}^\eps(1 - \vc{1}_n(u_{f_n}^\eps))\|_{L^1(\T^3 \times (0,T))}\leq \frac{1}{n}\int_0^T\int_{\T^3}\vr_{f_n} |u_{f_n}^\eps|^2~dxdt
		\overset{n\rightarrow \infty}{\longrightarrow} 0,
	\end{split}
\end{equation}
since $\int_{\T^3}\vr_{f_n} |u_{f_n}^\eps|^2~dx \leq \int_{\T^3 \times \R^3}f_n|\xi|^2~d\xi dx$, which is bounded by \eqref{uni:ent}.
By combining \eqref{conv:last1}, \eqref{conv:last2}, and \eqref{conv:last3}, we conclude
\begin{equation}\label{conv:prod3}
	f_n\chi_n(u_{f_n}^{\eps}) \rightarrow fu_f^\eps  \quad \text{as $n \rightarrow \infty$ in $L^{q}(\T^3 \times (0,T))$}.
\end{equation}

4. Equipped with \eqref{conv:prod1}, \eqref{conv:prod2}, and \eqref{conv:prod3}, there are no
problems with passing to the limit in \eqref{eq:apn} to conclude that $(u,f)$ is a weak
solution to \eqref{eq:ape}. The bounds \eqref{uni2:lp} and \eqref{uni2:ent}
can be proved as in Propositions \ref{energy-prop} and \ref{pro:lp}.

\end{proof}

\subsection{The $\eps \rightarrow 0$ limit and proof of Theorem \ref{weak-thm}}
We will now send $\eps \rightarrow 0$ in \eqref{eq:ape} and thereby
conclude the proof of Theorem \ref{weak-thm}. The largest challenge is presented by possible
vacuum regions of $\vr_f$ rendering passing
to the limit in $u_f$ non-trivial.
 To this end,
we shall need the following lemma:

\begin{lemma}
Let $\{(f_\eps,u_\eps)\}_{\eps>0}$ be a sequence of weak solutions to \eqref{eq:ape}.
As $\eps \rightarrow 0$,
	\begin{equation}\label{eq:conv2}
		\begin{split}
			f_\eps &\weak f\quad \text{in $\mathcal{C}(0,T;L^p(\T^3\times \R^3))$, $p \in (1, \infty)$}, \\
			\vr_{f_\eps} &\rightarrow \vr_f\quad \text{a.e and in $L^p(\T^3 \times (0,T))$, $~p \in \lt(1, \frac{5}{4}\rt)$}, \\
			m_{f_\eps} &\rightarrow m_f \quad \text{a.e and in $L^q(\T^3 \times (0,T))$, $~q \in \lt(1, \frac{5}{4}\rt)$}, \\
			u_\eps &\rightarrow u \quad \text{a.e and in $L^2(\T^3 \times (0,T))$},
		\end{split}
	\end{equation}
where $\vr_f = \int_{\R^3}f\,d\xi$, $m_f = \int_{\R^3} \xi f\,d\xi$, and where the convergence may take place along a subsequence.
\end{lemma}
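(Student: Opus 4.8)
The plan is to reproduce the scheme of Lemma~\ref{lem:compact1}, now exploiting that the bounds \eqref{uni2:lp}--\eqref{uni2:ent} of Lemma~\ref{lem:Rlim} are \emph{uniform in $\eps$}. First I would record the consequences of these bounds together with Lemma~\ref{usef-lem-2}: that $\vr_{f_\eps}\in_b L^p(\T^3\times(0,T))$ for $p\in(1,\tfrac53)$, $m_{f_\eps}\in_b L^q(\T^3\times(0,T))$ for $q\in(1,\tfrac54)$, $\sqrt{f_\eps}\,|\xi|\in_b L^2$, and that $\|\vr_{f_\eps}u_\eps\|_{L^2(0,T;L^q(\T^3))}\le C$ and $\|f_\eps u_\eps\|_{L^2(0,T;L^q(\T^3\times\R^3))}\le C$ for $q<\tfrac54$, exactly as in \eqref{conv:1} and \eqref{conv:1.5}; all these constants are independent of $\eps$. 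What remains is to turn these into (i) strong compactness of $\{u_\eps\}$, (ii) an $\eps$-uniform $L^q$ bound on the kinetic flux so the velocity averaging lemma applies, and (iii) the weak convergence of $\{f_\eps\}$.

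\textbf{Step (i): compactness of $u_\eps$.} Here the mollification $\theta_\eps\star u_\eps$ is harmless: $\theta_\eps$ is a probability density, so $\|\theta_\eps\star u_\eps\|_{L^r}\le\|u_\eps\|_{L^r}$ by Young's inequality and $\theta_\eps\star u_\eps$ stays divergence free on $\T^3$. Writing the convection term as $\Div\big((\theta_\eps\star u_\eps)\otimes u_\eps\big)$ and estimating the momentum equation in \eqref{eq:ape} against test fields $v\in[\mathcal C_0^\infty(\T^3\times(0,T))]^3$ exactly as in \eqref{eq:long}, while controlling the friction term $m_{f_\eps}-\vr_{f_\eps}u_\eps$ through Lemma~\ref{usef-lem-2} and \eqref{conv:1}, one obtains $\partial_t u_\eps\in_b L^{6/5}(0,T;W^{-1,2}(\T^3))$. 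Since also $u_\eps\in_b L^2(0,T;W^{1,2}(\T^3))$ by \eqref{uni2:ent}, the Aubin--Lions lemma gives, along a subsequence, $u_\eps\to u$ a.e.\ and in $L^2(\T^3\times(0,T))$.

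\textbf{Step (ii): the main obstacle.} Write the kinetic equation as $\partial_t f_\eps+\xi\cdot\Grad f_\eps=\Grad_\xi\cdot G_\eps+\sigma\Delta_\xi f_\eps$ with $G_\eps=f_\eps\big(u_\eps+u_{f_\eps}^\eps-2\xi\big)$ and $u_{f_\eps}^\eps=m_{f_\eps}/(\eps+\vr_{f_\eps})$. The terms $f_\eps u_\eps$ and $f_\eps|\xi|$ lie in $L^q(\T^3\times\R^3\times(0,T))$ uniformly for $q\in(1,\tfrac54)$, by \eqref{conv:1.5}, \eqref{uni2:ent} and Lemma~\ref{usef-lem-2}, just as in \eqref{conv:2}. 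The delicate term is $f_\eps u_{f_\eps}^\eps$: in contrast with the $R\to\infty$ step, no $\eps$-uniform bound on $u_{f_\eps}^\eps$ alone is available, since $u_{f_\eps}^\eps$ blows up on the vacuum set $\{\vr_{f_\eps}\to 0\}$ --- this is precisely the difficulty flagged in the text. The key point is that the factor $f_\eps$ vanishes there: Cauchy--Schwarz gives $|m_{f_\eps}|^2\le\vr_{f_\eps}\,m_2(f_\eps)$, hence $\vr_{f_\eps}\,|u_{f_\eps}^\eps|^2\le m_2(f_\eps)$, so that for $1<q<2$, using $f_\eps^q\le\|f_\eps\|_{L^\infty}^{q-1}f_\eps$ and then H\"older in $x$ with exponents $\tfrac2q$ and $\tfrac2{2-q}$,
\[
\int_{\T^3\times\R^3}f_\eps^q\,|u_{f_\eps}^\eps|^q\,d\xi\,dx\ \le\ \|f_\eps\|_{L^\infty}^{q-1}\int_{\T^3}\vr_{f_\eps}\,|u_{f_\eps}^\eps|^q\,dx\ \le\ \|f_\eps\|_{L^\infty}^{q-1}\,\|\vr_{f_\eps}\|_{L^1(\T^3)}^{1-\frac q2}\,\|m_2(f_\eps)\|_{L^1(\T^3)}^{\frac q2},
\]
which is bounded uniformly in $\eps$ by \eqref{uni2:ent} and the mass normalization $\|\vr_{f_\eps}\|_{L^1}=1$. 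Integrating in $t$ gives $f_\eps u_{f_\eps}^\eps\in_b L^q(\T^3\times\R^3\times(0,T))$ for every $q<\tfrac54$, hence $G_\eps\in_b L^q$ for $q\in(1,\tfrac54)$. The velocity averaging Lemma~\ref{lem:velocity} then applies; taking $\psi\equiv1$ and $\psi(\xi)=\xi$ yields $\vr_{f_\eps}\to\vr_f$ and $m_{f_\eps}\to m_f$ in $L^q(\T^3\times(0,T))$ for $q\in(1,\tfrac54)$, and a.e.\ along a further subsequence.

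\textbf{Step (iii): weak convergence of $f_\eps$ and conclusion.} By \eqref{uni2:lp}, $\{f_\eps\}$ is bounded in $L^\infty(0,T;L^p(\T^3\times\R^3))$ for every $p\in(1,\infty)$, and the kinetic equation together with the bound on $G_\eps$ provides equicontinuity of $t\mapsto f_\eps(t)$ with values in a fixed negative-order space; an Arzel\`a--Ascoli argument in the weak topology then gives $f_\eps\weak f$ in $\mathcal C(0,T;L^p(\T^3\times\R^3))$ for $p\in(1,\infty)$, along a subsequence. Passing to the limit in $\int_{\R^3}f_\eps\,d\xi$ and $\int_{\R^3}\xi f_\eps\,d\xi$ identifies $\vr_f=\int_{\R^3}f\,d\xi$ and $m_f=\int_{\R^3}\xi f\,d\xi$, which is the assertion of the lemma. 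The only genuinely new point compared with Lemma~\ref{lem:compact1} is the $\eps$-uniform control of $f_\eps u_{f_\eps}^\eps$ across the vacuum in Step~(ii); the rest is a routine adaptation.
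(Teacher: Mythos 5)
Your proposal is correct and follows the same overall strategy as the paper, which disposes of this lemma in one line by asserting that the bounds \eqref{conv:mom}, \eqref{conv:1}, and \eqref{conv:2} ``hold independently of $\eps$'' and then invoking the proof of Lemma \ref{lem:compact1} verbatim. The added value of your write-up is Step (ii): the chain of inequalities used in Lemma \ref{lem:compact1} to establish \eqref{conv:2} bounds $\|f_n u^{\eps}_{f_n}\|_{L^q}$ by $C\|m_{f_n}\|_{L^q}$, and the pointwise estimate behind this uses $\rho_{f}/(\eps+\rho_{f})^{q}\leq \eps^{1-q}$, so the constant there degenerates as $\eps\to 0$; your replacement via $\rho_{f_\eps}|u^{\eps}_{f_\eps}|^2\leq |m_{f_\eps}|^2/\rho_{f_\eps}\leq m_2(f_\eps)$, followed by H\"older with exponents $\tfrac{2}{2-q}$ and $\tfrac{2}{q}$ against the conserved mass and the kinetic energy from \eqref{uni2:ent}, is exactly the estimate needed to make the flux bound genuinely $\eps$-uniform (and is the same mechanism the paper itself uses later, in \eqref{conv:last3} and in the $\eps\to0$ treatment of $f_\eps u^{\eps}_{f_\eps}$ on low-density sets). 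The remaining steps --- the $L^{6/5}(0,T;W^{-1,2})$ bound on $\partial_t u_\eps$ plus Aubin--Lions for $u_\eps$, velocity averaging with $\psi\equiv 1$ and $\psi(\xi)=\xi$ for $\rho_{f_\eps}$ and $m_{f_\eps}$, and the weak equicontinuity argument for $f_\eps$ --- match the paper's argument for Lemma \ref{lem:compact1}, so I have nothing to object to.
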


\begin{proof}
Since \eqref{conv:mom}, \eqref{conv:1}, and \eqref{conv:2} hold independently of $\eps$,
the proof follows by the exact same arguments as the proof of Lemma \ref{lem:compact1}.
\end{proof}

Theorem \ref{weak-thm} follows as a consequence of the following lemma.
\begin{lemma}
Under the conditions of the previous lemma, $(f,u)$ is a weak solution
of \eqref{V-INS} in the sense of Definition \ref{def-weak}, where

	$$
		u_f = \begin{cases}
			\displaystyle\frac{\int_{\R^3}f\xi ~d\xi}{\int_{\R^3}f~d\xi}, & \vr_f \neq 0, \\
			0, & \vr_f = 0,
		\end{cases}
	$$

\end{lemma}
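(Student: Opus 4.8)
The plan is to pass to the limit $\eps \to 0$ in the weak formulation (Definition \ref{def-weak}) of \eqref{eq:ape}, using the convergences \eqref{eq:conv2} and the $\eps$-independent estimates \eqref{uni2:lp}--\eqref{uni2:ent}. First I would dispose of the terms that behave exactly as in the $R\to\infty$ step of Lemma \ref{lem:Rlim}: the transport term $\xi\cdot\Grad f_\eps$ and the Fokker--Planck term $\sigma\Delta_\xi f_\eps$ pass to the limit distributionally from $f_\eps\weak f$; the products $f_\eps u_\eps$ and $\vr_{f_\eps}u_\eps$ are bounded in $L^2(0,T;L^q(\T^3\times\R^3))$ and $L^2(0,T;L^q(\T^3))$ for $q<\tfrac54$ by the $\eps$-independent estimates \eqref{conv:1} and \eqref{conv:1.5}, and their weak limits are identified with $fu$ and $\vr_f u$ using the strong (a.e.) convergence of $u_\eps$ and $\vr_{f_\eps}$ together with the uniform $L^\infty$ bound on $f_\eps$, exactly as in \eqref{comple1}--\eqref{comple2}; the fluid source $m_{f_\eps}$ converges strongly; and the regularized data converge since $\theta_\eps\star u_0\to u_0$ in $L^2$ while $f_0$ is already unregularized at this stage.

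I would then handle the regularized convection term $(\theta_\eps\star u_\eps)\cdot\Grad u_\eps$: since convolution commutes with the divergence, $\theta_\eps\star u_\eps$ is divergence free, so against a divergence-free test field it may be written in conservative form as $-(\theta_\eps\star u_\eps)\otimes u_\eps:\Grad\psi$. Now $\|\theta_\eps\star u_\eps-u\|_{L^2(\T^3\times(0,T))}\le\|u_\eps-u\|_{L^2}+\|\theta_\eps\star u-u\|_{L^2}\to0$, and $u_\eps$ and $\theta_\eps\star u_\eps$ are bounded in $L^{10/3}(\T^3\times(0,T))$ by \eqref{uni2:ent} (via $L^\infty_tL^2_x\cap L^2_tL^6_x$), so interpolation gives $(\theta_\eps\star u_\eps)\otimes u_\eps\to u\otimes u$ strongly in $L^{5/3-\delta}$, which is enough to pass to the limit; the drag term $m_{f_\eps}-\vr_{f_\eps}u_\eps$ converges by the above.

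The hard part will be the local alignment term $\int_0^T\!\int_{\T^3\times\R^3}(u_{f_\eps}^\eps-\xi)f_\eps\cdot\nabla_\xi\phi$, since $u_{f_\eps}^\eps=m_{f_\eps}/(\eps+\vr_{f_\eps})$ is not controlled in any $L^p$-space near the vacuum set $\{\vr_f=0\}$, which is also exactly where the limiting $u_f$ has to be prescribed by hand. The $\xi f_\eps$ contribution is harmless. For the remaining contribution my strategy is to trade the bad factor $u_{f_\eps}^\eps$ against $\vr_{f_\eps}$. By linearity and a density argument it suffices to treat tensor test functions $\phi(x,t)\psi(\xi)$ (with, say, $\psi$ of compact support), reducing the term to $\int_0^T\!\int_{\T^3}u_{f_\eps}^\eps\cdot g_\eps$ where $g_\eps:=\phi\int_{\R^3}f_\eps\nabla_\xi\psi\,d\xi$. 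The velocity averaging Lemma \ref{lem:velocity} gives $g_\eps\to g:=\phi\int_{\R^3}f\nabla_\xi\psi\,d\xi$ in $L^q(\T^3\times(0,T))$, hence a.e.\ along a subsequence, while the pointwise bound $|g_\eps|\le C\vr_{f_\eps}$ yields $|u_{f_\eps}^\eps g_\eps|\le C|m_{f_\eps}|$, so $\{u_{f_\eps}^\eps g_\eps\}_\eps$ is bounded in $L^q(\T^3\times(0,T))$ for $q<\tfrac54$, in particular equi-integrable. To identify the limit I would argue pointwise: on $\{\vr_f>0\}$ the a.e.\ convergences in \eqref{eq:conv2} force $u_{f_\eps}^\eps=m_{f_\eps}/(\eps+\vr_{f_\eps})\to m_f/\vr_f=u_f$ a.e., so $u_{f_\eps}^\eps g_\eps\to u_f g$; on $\{\vr_f=0\}$ one has $f=0$ for a.e.\ $\xi$, hence $m_f=0$ and $g=u_f g=0$ there, while $|u_{f_\eps}^\eps g_\eps|\le C|m_{f_\eps}|\to C|m_f|=0$. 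Thus $u_{f_\eps}^\eps g_\eps\to u_f g$ a.e., and Vitali's theorem upgrades this to $L^1$ convergence, whence $\int_0^T\!\int_{\T^3}u_{f_\eps}^\eps g_\eps\to\int_0^T\!\int_{\T^3\times\R^3}u_f f\cdot\nabla_\xi\phi$; a density argument in $\phi$ finishes this term.

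Collecting all these limits shows that $(f,u)$ satisfies the weak formulation of \eqref{V-INS} in the sense of Definition \ref{def-weak}, with $u_f$ given by the stated formula. The regularity class of $(f,u)$ and the bounds \eqref{uni2:lp}--\eqref{uni2:ent} for the limit follow by weak lower semicontinuity as in Lemma \ref{lem:Rlim}, the nonnegativity of $f$ is inherited from $f_\eps\ge0$, and $u\in\mathcal{C}^0([0,T];\mathcal{V}')$ follows from a uniform bound on $\partial_t u$ obtained exactly as in the previous step. The single genuine obstacle is the vacuum in the $u_{f_\eps}^\eps f_\eps$ term, resolved by the ``multiply by $\vr_{f_\eps}$'' device above together with the elementary observation that $m_f$ vanishes wherever $\vr_f$ does.
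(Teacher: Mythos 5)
Your proposal is correct, and for the one genuinely delicate term it takes a different route from the paper. The paper handles $f_\eps u_{f_\eps}^{\eps}$ by a two-stage Egoroff decomposition: on $A_\lambda=\{\vr_f>\lambda\}$ it extracts a subset where $\vr_{f_\eps}$ converges uniformly and stays bounded below, giving $u_{f_\eps}^{\eps}\to m_f/\vr_f$ in $L^q$ there; on the near-vacuum complement it controls the $L^1$ mass by the Cauchy--Schwarz bound $\|f_\eps u_{f_\eps}^{\eps}\|_{L^1}\leq(\int\vr_{f_\eps})^{1/2}(\int\vr_{f_\eps}|u_{f_\eps}^{\eps}|^2)^{1/2}\leq C(\lambda+\eta/2)^{1/2}$ using the kinetic energy bound; it then sends $\eps\to0$, $\eta\to0$, $\lambda\to0$ in that order, obtaining weak convergence of the full product in $L^q(\T^3\times\R^3\times(0,T))$ for all $q<\tfrac54$. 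Your argument instead integrates out $\xi$ first (after reducing to tensor test functions), invokes velocity averaging to get a.e.\ convergence of $g_\eps=\phi\int f_\eps\nabla_\xi\psi\,d\xi$, and then uses the domination $|u_{f_\eps}^{\eps}g_\eps|\leq C|m_{f_\eps}|$ together with $m_{f_\eps}\to m_f$ a.e.\ and the observation that $m_f=0$ wherever $\vr_f=0$, closing with Vitali. Both are sound; the trade-off is that your version is measure-theoretically lighter (one application of Vitali versus iterated Egoroff and a triple limit) and makes the vanishing of the flux on the vacuum set completely transparent, while the paper's version avoids the tensor-product density step, works directly with arbitrary test functions, and yields the slightly stronger conclusion of weak $L^q$ convergence of $f_\eps u_{f_\eps}^{\eps}$ itself rather than only convergence of the tested quantities. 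The remaining terms (convection, drag, transport, diffusion, initial data) are treated in your proposal essentially as in the paper, via \eqref{eq:conv2}, \eqref{conv:1}, \eqref{conv:1.5} and the $\eps$-uniform energy and $L^p$ bounds; your interpolation in $L^{10/3}$ for the convective term is a harmless variant of the paper's weak $L^{6/5}$ convergence.
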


\begin{proof}
From \eqref{eq:conv2}, we easily conclude that
\begin{equation*}
	\begin{split}
		f_\eps(u_\eps - \xi) &\weak f(u-\xi) \text{ in $L^2(0,T;L^q(\T^3 \times \R^3))$},\quad q < \frac{5}{4} \\
		m_{f_\eps} - \vr_{f_\eps}u_\eps & \weak m_f - \vr_fu \text{ in $L^2(0,T;L^q(\T^3))$}, \quad q < \frac{5}{4}\\
		 (\theta_\eps \star u_\eps)\cdot \Grad u_\eps & \weak u\cdot \Grad u
		\text{ in $L^\frac{6}{5}(\T^3 \times (0,T))$}.
	\end{split}
\end{equation*}
Hence, in order to pass to the limit in \eqref{eq:ape}, it remains to prove that
$$
	f_\eps u_{f_\eps}^{\eps} \to fu_f \quad \text{as $\eps \rightarrow 0$} \mbox{ in the sense of distribution}.
$$
For this purpose, let $\lambda > 0$ be a small parameter and define
$$
	A_\lambda = \Set{(x,t): \vr_f(x,t)> \lambda}.
$$
Since $\vr_{f_\eps} \rightarrow \vr_f$ a.e, Egoroff's theorem yields,
for any $\eta > 0$, the existence of a set $B_{\lambda,\eta}$ with $|A_\lambda \setminus B_{\lambda,\eta}| < \eta$
and where $\vr_{f_\eps} \rightarrow \vr_f \text{ uniformly on $B_{\lambda,\eta}$}$. In particular, for
a sufficiently small $\bar \eps$,
$$
	\vr_{f_\eps} > \lambda - \frac{\eta}{2}, \quad \forall \eps < \bar \eps, \quad (x,t)\in B_{\lambda,\eta}.
$$
By virtue of \eqref{eq:conv2}, we have that
\begin{equation*}
	u_{f_\eps}^{\eps} = \frac{m_{f_\eps}}{\eps + \vr_{f_\eps}}
	\rightarrow \frac{m_f}{\vr_f} \text{ in $L^q(B_{\lambda,\eta})$}, \quad q < \frac{5}{4}.
\end{equation*}
In particular, since $f_\eps$ converges weakly, we can conclude that
\begin{equation*}
	f_\eps u_{f_\eps}^{\eps} \weak fu_f \quad \text{in $L^q(B_{\lambda,\eta} \times \R^3)$ as $\eps \rightarrow 0$}.
\end{equation*}
We then write
\begin{equation*}
	f_\eps u_{f_\eps}^{\eps}\vc{1}_{A_\lambda} = f_\eps u_{f_\eps}^{\eps}\vc{1}_{B_\eta} + f_\eps u_{f_\eps}^{\eps}\vc{1}_{A_\lambda \setminus B_\eta},
\end{equation*}
where we see that the last term is small due to the following bound
\begin{equation}\label{new401}
	\left\|f_\eps u_{f_\eps}^{\eps}\vc{1}_{A_\lambda \setminus B_\eta}\right\|_{L^1(\T^3\times \R^3 \times (0,T))}
	\leq \eta^\frac{1}{q'}\|m_{f_\eps}\|_{L^q(\T^3 \times (0,T))}
	= \mathcal{O}(\eta^\frac{1}{q'}).
\end{equation}
Since we can choose $\eta$ arbitrarily small, we must have that
\begin{equation*}
	f_\eps u_{f_\eps}^{\eps} \weak fu_f \quad \text{in $L^q(A_\lambda \times \R^3)$ as $\eps \rightarrow 0$}.
\end{equation*}
For the estimate on the set $(\T^3 \times (0,T))\setminus A_\lambda$, we let $\eta$ be a  small parameter and make another application of Egoroff's theorem to obtain a set $C_{\lambda,\eta}$ such that
$$
\left|\left((\T^3 \times (0,T))\setminus A_\lambda\right)\setminus C_{\lambda,\eta} \right| < \eta,
\qquad \vr_{f_\eps} < \lambda+ \frac{\eta}{2} \quad \forall \,\eps < \overline{\eps}.
$$
On $C_{\lambda,\eta}$, the product $\vr_{f_n} u_{f_n}$ is controlled by $\lambda + \frac{\eta}{2}$
as
\begin{equation*}
	\begin{split}
		 \left\|f_\eps u_{f_\eps}^\eps\right\|_{L^1(C_{\lambda,\eta} \times \R^3)}
		&\leq \left(\int_{C_{\lambda,\eta}} \vr_{f_\eps} dx dt\right)^\frac{1}{2}
		\left(\int_0^T \int_{\T^3}\vr_{f_\eps}\left|u_{f_\eps}^\eps\right|^2~dxdt\right)^\frac{1}{2}\\
		&\leq \left(\lambda + \frac{\eta}{2}\right)^\frac{1}{2}C,
	\end{split}
\end{equation*}
where we have used $\int_{\T^3}\vr_{f_\eps} |u_{f_\eps}^\eps|^2\,dx \leq \int_{\T^3 \times \R^3}f_\eps|\xi|^2\,d\xi dx$ which is bounded by \eqref{uni:ent}, and the fact that $|\T^3 \times (0,T)|$ is finite to conclude the last inequality.
As in \eqref{new401}, we also see that
$$
\left\|f_\eps u_{f_\eps}^{\eps}\right\|_{L^1\left(\left(((\T^3 \times (0,T))\setminus A_\lambda\right)\setminus C_{\lambda,\eta}) \times \R^3\right)}
\leq \mathcal{O}\left(\eta^\frac{1}{q'}\right).
$$
Since $\eta$ can be chosen arbitrarily small, we deduce
\begin{equation*}
	\left\|f_\eps u_{f_\eps}^{\eps}\right\|_{L^1\left( \left((\T^3 \times (0,T)) \setminus A_\lambda\right) \times \R^3 \right)} \leq \mathcal{O}\left(\lambda^\frac{1}{2}\right).
\end{equation*}
Hence by choosing sufficiently small $\lambda$ to conclude
$$
	f_\eps u^{\eps}_{f_\eps} \weak f u_f \text{ in $L^q(\T^3 \times \R^3 \times (0,T))$}, \quad q < \frac{5}{4}.
$$
This completes the proof.
\end{proof}

%
%
\renewcommand{\Re}{\mathcal{H}}
\section{Hydrodynamic limit (Theorem \ref{lim-thm})}
In this section, we will study the
flocking-Navier-Stokes system \eqref{V-INS} under the assumption of strong noise and local alignment.
In this regime, we shall rigorously establish
that the evolution can be accurately described
by a coupled compressible Euler and incompressible Navier-Stokes system, and thereby prove Theorem \ref{lim-thm}. For the reader's convenience, we recall the equations
under consideration:
\begin{align}\label{sing-V-INS}
\begin{aligned}
\partial_t f^\eps + \xi \cdot \nabla f^\eps + \nabla_{\xi} \cdot \left[ (u^\eps - \xi)f^\eps\right] &= \frac{1}{\eps} \nabla_\xi \cdot \left[ \nabla_\xi f^\eps - (u_{f^\eps} - \xi)f^\eps\right],\cr
\partial_t u^\eps + u^\eps \cdot \nabla u^\eps + \nabla p^\eps - \mu \Delta u^\eps &= - \int_{\R^3} (u^\eps - \xi) f^\eps d\xi,\cr
\nabla \cdot u^\eps &= 0,
\end{aligned}
\end{align}
subject to
\begin{equation}\label{ini-sing-v-ins}
(f^\eps(x,\xi,0),u^\eps(x,0)) = (f_0(x,\xi),u_0(x)).
\end{equation}
Our goal is to prove that solutions of this
system can be well approximated by the Euler-Navier-Stokes system
\begin{align*}
    \partial_t u_f + \nabla \cdot (\vr_f u_f) &= 0, \\
    (\vr_f u_f)_t + \nabla \cdot(\vr_f u_f \otimes u_f)
    + \Grad \vr_f &= \vr_f(u-u_f), \\
    u_t + u \cdot \nabla u + \nabla p - \mu \Delta u &= - \vr_f(u-u_d), \\
    \nabla \cdot u &= 0,
\end{align*}
provided $\eps$ is sufficiently small.

\subsection{Entropy of weak solutions}
We first show that the weak solutions obtained from Theorem \ref{weak-thm} satisfies some entropy inequalities
that are uniform in $\eps$. For this, we set
\begin{align}\label{hydro:nota}
\begin{aligned}
\mathcal{F}(f^\eps,u^\eps) &:= \int_{\T^3 \times \R^3} f^\eps\left( \log f^\eps + \frac{|\xi|^2}{2}\right) dx d\xi + \int_{\T^3} \frac{|u^\eps|^2}{2} dx, \cr
D_1(f^\eps)&:= \int_{\T^3 \times \R^3} \frac{1}{f^\eps} | \nabla_\xi f^\eps + \left( u_{f^\eps} - \xi \right)f^\eps |^2 dx d\xi, \cr
D_2(f^\eps,u^\eps) &:= \int_{\T^3 \times \R^3} |u^\eps - \xi|^2 f^\eps dx d\xi + \mu \int_{\T^3}|\nabla u^\eps|^2 dx.
\end{aligned}
\end{align}
Then it follows from Proposition \ref{energy-prop} that
\begin{equation*}
\frac{d}{dt}\mathcal{F}(f^\eps,u^\eps) + \frac{1}{\eps}D_1(f^\eps) + D_2(f^\eps,u^\eps) = 3M_0(f_0),
\end{equation*}
and this yields
\begin{equation}\label{hydro-est-0}
\mathcal{F}(f^\eps,u^\eps) - \mathcal{F}(f_0,u_0) + \frac{1}{\eps}\int_0^tD_1(f^\eps)ds + \int_0^tD_2(f^\eps,u^\eps)ds = 3M_0(f_0)t.
\end{equation}
On the other hand, we notice that
\[
\int_{\T^3 \times \R^3} f^\eps |\log f^\eps| dx d\xi 
\leq \int_{\T^3 \times \R^3} f^\eps \log f^\eps dx d\xi + \frac{1}{4}\int_{\T^3 \times \R^3} (1 + |\xi|^2)f^\eps dx d\xi + C.
\]
This implies that
\begin{align}\label{hydro-better-est-1}
\begin{aligned}
&\int_{\T^3 \times \R^3} f^\eps\left(1 + |\log f^\eps| + \frac{1}{4}|\xi|^2\right)dx d\xi + \int_{\T^3} \frac{|u^\eps|^2}{2} dx \cr
&\quad + \int_0^T D_2(f^\eps,u^\eps) dt + \frac{1}{\eps}\int_0^T D_1(f^\eps) dt \leq \mathcal{F}(f_0,u_0) + C(T).
\end{aligned}
\end{align}
By expanding the square, one can check after some tedious computations that
\begin{align}\label{hydro-est-1}
\begin{aligned}
&\frac{1}{2}\int_{\T^6 \times \R^6} f^\eps(x,\xi)f^\eps(y,\xi_*)|\xi- \xi_*|^2 dx d\xi dy d\xi_* +\int_{\T^3} \rho_{f^\eps}|u^\eps - u_{f^\eps}|^2 dx \cr
&\quad = \int_{\T^3 \times \R^3} |u^\eps - \xi|^2 f^\eps dx d\xi + \frac{1}{2} \int_{\T^3 \times \T^3} \rho_{f^\eps}(x) \rho_{f^\eps}(y)|u_{f^\eps}(x) - u_{f^\eps}(y)|^2 dx dy.
\end{aligned}
\end{align}
Now we use the similar estimates in \cite[Lemma B.3]{KMT2} to get
\begin{align}\label{hydro-est-2}
\begin{aligned}
&\frac{1}{2}\int_{\T^3 \times \T^3} \rho_{f^\eps}(x) \rho_{f^\eps}(y) | u_{f^\eps}(x) - u_{f^\eps}(y)|^2 dx dy \cr
& \qquad \leq -3(M_0(f_0))^2 + C(T)\eps + \frac{1}{2\eps}D_1(f^\eps) + \int_{\T^6 \times \R^6} f^\eps(x,\xi)f^\eps(y,\xi_*)\frac{|\xi - \xi_*|^2}{2} dx d\xi dy d\xi_*.
\end{aligned}
\end{align}
Combining \eqref{hydro-est-0}, \eqref{hydro-est-1}, and \eqref{hydro-est-2}, we obtain
\begin{align}\label{hydro-better-est-2}
\begin{aligned}
&\mathcal{F}(f^\eps,u^\eps) + \frac{1}{2\eps} \int_0^t D_1(f^\eps)\, ds + \int_0^t \int_{\T^3} \rho_{f^\eps}|u^\eps - u_{f^\eps}|^2 dx ds + \mu\int_0^t \int_{\T^3} |\nabla u^\eps|^2 dx ds \cr
&\qquad \leq \mathcal{F}(f_0,u_0) + C(T) \eps,
\end{aligned}
\end{align}
where we used the fact that $f_0$ has a unit mass, i.e., $M_0(f_0) = 1$.
In light of the above arguments, we conclude the following
proposition.
\begin{proposition}
Suppose the initial data $(f_0, u_0)$ satisfies \eqref{init_cond1}.
Then for any $T>0$ and $\eps >0$ there exists at least one weak solution $(f^\eps, u^\eps)$ to \eqref{sing-V-INS}-\eqref{ini-sing-v-ins} on the time-interval $(0, T)$ satisfying \eqref{hydro-better-est-1} and \eqref{hydro-better-est-2}.
\end{proposition}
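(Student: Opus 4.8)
For fixed $\eps>0$ the system \eqref{sing-V-INS}--\eqref{ini-sing-v-ins} is exactly \eqref{V-INS}--\eqref{ini-V-INS} with the choice $\alpha=1$, $\beta=\sigma=1/\eps$, all strictly positive, so Theorem~\ref{weak-thm} already produces a weak solution $(f^\eps,u^\eps)$ on $(0,T)$ for every $T>0$. The real content of the proposition is the quantitative part: a solution built through the approximation scheme of Section~\ref{sec4} in addition obeys \eqref{hydro-better-est-1}--\eqref{hydro-better-est-2}. Since all the algebraic manipulations \eqref{hydro-est-0}--\eqref{hydro-est-2} that lead to these bounds are already displayed above, the only thing to supply is a justification of the entropy identity \eqref{hydro-est-0} at the level of weak solutions. (Note that \eqref{init_cond1} does force $\mathcal F(f_0,u_0)<\infty$, since $f_0\in L^\infty$ together with $(1+|\xi|^2)f_0\in L^1$ gives $\int f_0|\log f_0|<\infty$.)

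The plan is to re-run the regularized scheme \eqref{eq:ap}, now with the local alignment and noise coefficients both set equal to $1/\eps$ (writing $\delta$ for the regularization parameter, to keep it distinct from $\eps$), and to carry the entropy $\mathcal F(f,u)=\int f(\log f+\tfrac{|\xi|^2}{2})\,dxd\xi+\int\tfrac{|u|^2}{2}\,dx$ along with the energy. At that level Lemma~\ref{lem:fixed-well} guarantees $f\in\mathcal C(0,T;(L^1\cap L^\infty))$ with all moments finite and $\nabla_\xi f^{p/2}\in L^2$ for every $p$, so the kinetic equation may legitimately be tested against $1+\log f+\tfrac14|\xi|^2$ and the fluid equation against $u$. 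Proceeding as in Proposition~\ref{energy-prop} and Lemma~\ref{lem:}, this yields a regularized version of $\tfrac{d}{dt}\mathcal F+\tfrac1\eps D_1+D_2=3M_0(f_0)$ in which the regularizations $\chi_R$, $\theta_\delta\star u$ and $m_f/(\rho_f+\delta)$ contribute only error terms that are non-positive or tend to $0$ as $\delta\to0$, $R\to\infty$ --- precisely the computations \eqref{uni:en1}--\eqref{uni:en3} and their $R\to\infty$ analogues behind \eqref{uni2:ent}. The genuinely new ingredient is the logarithmic entropy, controlled by the elementary bound $\int f|\log f|\le\int f\log f+\tfrac14\int(1+|\xi|^2)f+C$ recorded above, so that $\int f|\log f|$ is dominated by quantities already on the left of \eqref{hydro-better-est-1}. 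Passing to the double limit and invoking lower semicontinuity of $f\mapsto\int f\log f$, $u\mapsto\|\nabla u\|_{L^2}^2$ and of the dissipations $D_1$, $D_2$ with respect to the available weak/a.e.\ convergences (as in the $R\to\infty$ and $\delta\to0$ passages of Section~\ref{sec4}), together with Fatou in $t$ on the time integrals, turns the identity into \eqref{hydro-est-0} with ``$\le$'' in place of ``$=$''; in particular $D_1(f^\eps)\in L^1(0,T)$.

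From here \eqref{hydro-better-est-1} is immediate: discard the non-negative parts of $D_1$ and $D_2$ where convenient, move $\int f|\log f|$ to the left using the elementary inequality, and recall $M_0(f_0)=1$. For \eqref{hydro-better-est-2} one combines \eqref{hydro-est-0} with the purely algebraic identity \eqref{hydro-est-1} --- the expansion of $\tfrac12\iint f^\eps(x,\xi)f^\eps(y,\xi_*)|\xi-\xi_*|^2$ --- and with the $\eps$-dependent estimate \eqref{hydro-est-2}, which is the transcription of \cite[Lemma~B.3]{KMT2} bounding the velocity spread $\tfrac12\iint\rho_{f^\eps}(x)\rho_{f^\eps}(y)|u_{f^\eps}(x)-u_{f^\eps}(y)|^2$ by $-3(M_0(f_0))^2+C(T)\eps+\tfrac1{2\eps}D_1(f^\eps)$ plus the kinetic spread $\tfrac12\iint f^\eps f^\eps_*|\xi-\xi_*|^2$. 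Proved at the approximation level and passed to the limit, this transcription needs only that $M_0(f^\eps)\equiv1$, that $M_2(f^\eps)$ stay bounded (which now follows from \eqref{hydro-better-est-1}), and that $D_1(f^\eps)$ be integrable in time; collecting the terms that survive then gives \eqref{hydro-better-est-2}.

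The main obstacle, as usual in kinetic existence-with-entropy statements, is the limit passage in the entropy-dissipation term $\tfrac1\eps D_1(f^\eps)$ and in the logarithmic entropy: no entropy may be lost and no dissipation gained, and the vacuum set $\{\rho_{f^\eps}=0\}$ must corrupt neither $D_1$ nor the reconstructed local velocity $u_{f^\eps}=m_{f^\eps}/\rho_{f^\eps}$. The convenient reformulation is the identity
\[
D_1(f)=4\|\nabla_\xi\sqrt f\|_{L^2(\T^3\times\R^3)}^2+6M_0(f)+\int_{\T^3\times\R^3}|u_f-\xi|^2 f\,dxd\xi,
\]
obtained by expanding the square and integrating by parts in $\xi$ (legitimate because $u_f$ is $\xi$-independent, so $\nabla_\xi\cdot(u_f-\xi)=-3$); this exhibits $D_1$ as a sum of non-negative terms (Fisher information, mass, and a kinetic-spread term), which is what makes the $\liminf$ inequality go through, and shows $D_1$ is finite exactly when $\nabla_\xi f^{1/2}\in L^2$ and $M_2(f)<\infty$. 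Finally, since $|m_f|\le\rho_f^{1/2}m_2(f)^{1/2}$, one has $m_{f^\eps}=0$ wherever $\rho_{f^\eps}=0$, so every product $\rho_{f^\eps}|u^\eps-u_{f^\eps}|^2$, $|u_{f^\eps}-\xi|^2 f^\eps$, etc.\ is unambiguously defined a.e., which disposes of the vacuum issue.
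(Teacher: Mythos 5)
Your proposal is correct and follows essentially the same route as the paper: the paper's proof of this proposition is exactly the chain \eqref{hydro-est-0}--\eqref{hydro-better-est-2} (the entropy/energy identity deduced from Proposition \ref{energy-prop}, the elementary $f|\log f|$ bound, the expansion \eqref{hydro-est-1}, and the transcription of \cite[Lemma B.3]{KMT2}), asserted for the weak solutions produced by Theorem \ref{weak-thm}. The only difference is that you make explicit the justification of \eqref{hydro-est-0} at the weak-solution level (re-running the regularized scheme with the entropy, lower semicontinuity, and the non-negative decomposition of $D_1$ into Fisher information, mass, and kinetic spread), a step the paper leaves implicit; this is a refinement of, not a departure from, the paper's argument.
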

We will prove Theorem \ref{lim-thm} through a relative entropy argument. For this to be rigorous, we need a unique strong solution (at least for short time) to the system \eqref{hydro-eqn}-\eqref{ini-hydro-eqn}.
We claim the following result.
\newcommand{\mc}{\mathcal{C}}
\begin{theorem}
Let $s \geq 3$. Suppose the initial data $(\rho_{f_0},u_{f_0},u_0) \in H^s(\T^3)$ and $\rho_{f_0} > 0$. Then there exists a positive constant $T^* > 0$ such that the Cauchy problem \eqref{hydro-eqn}-\eqref{ini-hydro-eqn} has a unique solution $(\rho_f,u_f,u)$ satisfying
\begin{equation*}
    \begin{split}
        (\rho_f,u_f) &\in \mc([0,T^*];H^s)\cap \mc^1([0,T^*];H^{s-1}), \\
        u &\in \mc([0,T^*];H^s) \cap L^2([0,T^*];H^{s+1}).
    \end{split}
\end{equation*}
\end{theorem}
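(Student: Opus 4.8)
The plan is to recast the compressible-Euler block of \eqref{hydro-eqn} as a symmetric hyperbolic system, couple it to the forced incompressible Navier--Stokes equation for $u$, and run a standard Picard iteration whose uniform energy estimates close on a short time interval. Since $\rho_{f_0}\in H^s(\T^3)\hookrightarrow \mc(\T^3)$ for $s\geq 3$ and $\rho_{f_0}>0$, compactness of $\T^3$ gives $\rho_{f_0}\geq c_0>0$; composing with the smooth function $\log$ on the compact range of $\rho_{f_0}$ yields $h_0:=\log\rho_{f_0}\in H^s(\T^3)$. Setting $h=\log\rho_f$ and using the continuity equation to simplify the momentum equation, the system \eqref{hydro-eqn}--\eqref{ini-hydro-eqn} is equivalent to
\begin{align*}
&\partial_t h + u_f\cdot\Grad h + \Div u_f = 0, \\
&\partial_t u_f + u_f\cdot\Grad u_f + \Grad h = u - u_f, \\
&\partial_t u + u\cdot\Grad u - \mu\Delta u + \Grad p = -e^{h}(u - u_f), \qquad \Div u = 0,
\end{align*}
where the first two equations form a symmetric hyperbolic system $\partial_t W + \sum_{j=1}^3 A_j(W)\,\partial_j W = (0,u-u_f)$ for $W:=(h,u_f)$ with symmetric matrices $A_j=u_{f,j}I+B_j$, and the two blocks interact only through the zeroth-order friction terms $\pm e^{h}(u-u_f)$ and through $u$ appearing as a source in the hyperbolic block.

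I would then set up the iteration: given $(h^n,u_f^n,u^n)$ on $[0,T]$ with the regularity asserted in the theorem, let $(h^{n+1},u_f^{n+1})$ solve the \emph{linear} symmetric hyperbolic system with frozen transport coefficient $u_f^n$ and source $u^n-u_f^{n+1}$, and let $u^{n+1}$ solve the linear Navier--Stokes equation obtained by freezing the convective velocity at $u^n$ and the density at $e^{h^n}$, with the pressure eliminated via the Leray projection $\mathbb{P}$. Existence, uniqueness and $H^s$-regularity at each step are classical from linear symmetric-hyperbolic theory and linear parabolic (Stokes) theory, so all iterates are well defined, with $(h^{n+1},u_f^{n+1})\in\mc([0,T];H^s)$ and $u^{n+1}\in\mc([0,T];H^s)\cap L^2([0,T];H^{s+1})$.

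The core of the proof is a uniform-in-$n$ a priori bound on an interval $[0,T^*]$ independent of $n$. Applying $\partial^\alpha$ for $|\alpha|\leq s$, pairing with $\partial^\alpha$ of the corresponding unknown, exploiting the symmetry of the $A_j$ to handle the principal part, and using Moser-type and Kato--Ponce commutator estimates, one gets for the hyperbolic block $\frac{d}{dt}\|(h^{n+1},u_f^{n+1})\|_{H^s}^2 \lesssim P\big(\|(h^n,u_f^n)\|_{H^s}\big)\big(\|(h^{n+1},u_f^{n+1})\|_{H^s}^2+\|u^n\|_{H^s}^2\big)$ for a fixed polynomial $P$, while the parabolic estimate for $u^{n+1}$ gives $\frac{d}{dt}\|u^{n+1}\|_{H^s}^2+\mu\|\Grad u^{n+1}\|_{H^s}^2 \lesssim \|u^n\|_{H^s}\|u^{n+1}\|_{H^s}^2 + \big(1+\|h^n\|_{H^s}\big)^{c}\big(\|u^{n+1}\|_{H^s}^2+\|u_f^n\|_{H^s}^2\big)$, where $H^s\hookrightarrow L^\infty$ controls $e^{h^n}$ and simultaneously shows $\rho_f=e^h$ stays bounded above and below, so no vacuum forms along the iteration. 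A standard continuity argument, with $T^*$ chosen depending only on $\|(h_0,u_{f_0},u_0)\|_{H^s}$, keeps all iterates in a fixed ball of $\mc([0,T^*];H^s)$ with $u^n$ additionally bounded in $L^2([0,T^*];H^{s+1})$.

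Finally I would show that the differences $(h^{n+1}-h^n,\,u_f^{n+1}-u_f^n,\,u^{n+1}-u^n)$ contract in a lower-order norm, e.g. $L^\infty(0,T^*;L^2)$ (with the usual parabolic gain for the $u$-component), after possibly shrinking $T^*$; this produces a limit $(\rho_f,u_f,u)$ solving the system, and the uniform $H^s$ bounds together with interpolation upgrade the convergence. Continuity in time at the top regularity is recovered by a Bona--Smith type mollification argument, giving $(\rho_f,u_f)\in\mc([0,T^*];H^s)\cap\mc^1([0,T^*];H^{s-1})$ and $u\in\mc([0,T^*];H^s)\cap L^2([0,T^*];H^{s+1})$, and $h=\log\rho_f$ is legitimate since $\rho_f\geq c_0 e^{-CT^*}>0$. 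Uniqueness follows from the same $L^2$ energy estimate applied to the difference of two solutions and Gronwall's inequality. The main obstacle is closing the coupled $H^s$ estimate on a common time interval: one must verify that the friction terms $\pm e^{h}(u-u_f)$ and the appearance of $u$ as a source in the hyperbolic equations do not destroy the closure — and they do not, precisely because these terms carry no derivatives and are therefore absorbed by the $H^s$ norms already being controlled.
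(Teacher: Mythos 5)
The paper does not prove this theorem: it explicitly omits the proof as standard, referring the reader to Majda's monograph on symmetric hyperbolic systems and balance laws. Your sketch --- symmetrizing the isothermal Euler block via $h=\log\rho_f$ (legitimate since $\rho_{f_0}>0$ on the compact torus), running a linearized Picard iteration coupled to the forced Stokes problem, closing uniform $H^s$ energy estimates using that the friction coupling is zeroth order, and contracting in a lower norm --- is precisely the standard Majda-type argument the citation points to, and it is correct.
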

Since local existence theories for this type of balance laws have been well developed, we omit this proof. We refer to \cite{Majda} for the readers who are interested in it.


\subsection{Relative entropy}
We shall prove Theorem \ref{lim-thm} using
a relative entropy argument.
For this purpose, it will be convenient to write the equation
in a more abstract form using the variables
\begin{equation*}
    U := \begin{pmatrix}
        \vr_f \\
        m_f \\
        u
    \end{pmatrix},
    \quad
    A(U) := \begin{pmatrix}
        m_f & 0 & 0 \\
        \frac{m_f \otimes m_f}{\vr_f} & \vr_f & 0 \\
        u \otimes u & 0 & 0
    \end{pmatrix},
\end{equation*}
and
\begin{equation*}
    F(U) := \begin{pmatrix}
        0 \\
        \vr_f u-m_f \\
        m_f - \vr_f u - \Grad p + \mu \Delta u
    \end{pmatrix},
\end{equation*}
where $m_f = \rho_f u_f$.
The system can then be recast in the form
\begin{equation*}
    U_t + \Div A(U) = F(U),
\end{equation*}
and the macroscopic entropy (energy) takes the form
\begin{equation*}
    E(U) := \vr_f \log \vr_f + \frac{|m_f|^2}{2\vr_f} + \frac{|u|^2}{2}.
\end{equation*}

Using the newly defined variables, we define the
relative entropy functional as follows:
\begin{equation}\label{def-re-en}
    \H(V|U) := E(V)- E(U) - dE(U)(V-U), \quad \mbox{and} \quad V := \begin{pmatrix}
        \vr_{\bar{f}} \\
        m_{\bar{f}} \\
        \bar{u}
    \end{pmatrix},
\end{equation}
Upon noticing that
\begin{align*}
\begin{aligned}
&-dE(U)(V-U) = -
\begin{pmatrix}
-\frac{m_f^2}{2\rho_f^2} + \log \rho_f + 1 \\
\frac{m_f}{\rho_f}\\
u
\end{pmatrix}
\begin{pmatrix}
\rho_{\bar{f}} - \rho_f\\
m_{\bar{f}} - m_f\\
\bar{u} - u
\end{pmatrix}
\cr
&\qquad \qquad  =\frac{\rho_{\bar{f}}}{2}|u_f|^2 - \frac{\rho_f}{2}|u_f|^2 - (\log \rho_f + 1)(\rho_{\bar{f}} - \rho_f) + \rho_f |u_f|^2 - \rho_{\bar{f}}u_{\bar{f}} \cdot u_f - (\bar{u} - u)\cdot u,
\end{aligned}
\end{align*}
we see that the relative entropy can alternatively be written
\[
\H(V|U) = \frac{\rho_{\bar{f}}}{2}| u_f - u_{\bar{f}}|^2 + \frac{1}{2}|\bar{u} - u|^2 + P(\rho_{\bar{f}},\rho_f),
\]
where
\begin{align*}
\begin{aligned}
P(\rho_{\bar{f}},\rho_f) := \rho_{\bar{f}} \log \rho_{\bar{f}} - \rho_f \log \rho_f + (\rho_f - \rho_{\bar{f}})(1 + \log \rho_f) \geq \frac{1}{2}\min \left\{ \frac{1}{\rho_{\bar{f}}}, \frac{1}{\rho_f} \right\} (\rho_{\bar{f}} - \rho_f)^2.
\end{aligned}
\end{align*}
Hence, the relative entropy controls the $L^2$-difference
provided one of the densities is without  vacuum regions.

To proceed, we shall need to derive
an evolution equation for the integrated relative entropy.

\begin{lemma}\label{lem-hyd-rel-1}
The relative entropy $\H$ defined in \eqref{def-re-en} satisfies the following equality.
\begin{align*}
\begin{aligned}
&\frac{d}{dt}\int_{\T^3} \H(V|U)\,dx + \mu \int_{\T^3} |\nabla (u - \bar{u})|^2 dx + \int_{\T^3} \rho_{\bar{f}}|(u_{\bar{f}} - \bar{u}) - (u_f - u)|^2 dx\cr
&\quad = \int_{\T^3} \partial_t E(V)\,dx + \int_{\T^3} \rho_{\bar{f}}|\bar{u} - u_{\bar{f}}|^2 dx + \mu \int_{\T^3} |\nabla \bar{u}|^2 dx \cr
&\qquad - \int_{\T^3} \nabla (dE(U)):A(V|U)\,dx - \int_{\T^3} dE(U)\left[ V_t + \nabla \cdot A(V) - F(V)\right]dx\cr
&\qquad - \int_{\T^3} (\rho_f - \rho_{\bar{f}})(\bar{u} - u)(u - u_f)\, dx,
\end{aligned}
\end{align*}
where we have introduced the relative flux functional
\[
A(V|U) := A(V) - A(U) -dA(U)(V-U).
\]
\end{lemma}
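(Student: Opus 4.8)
The plan is to differentiate $\int_{\T^3}\H(V|U)\,dx$ in time using the definition \eqref{def-re-en}, namely $\H(V|U) = E(V) - E(U) - dE(U)(V-U)$, and then substitute the abstract evolution equations $U_t + \Div A(U) = F(U)$ and (the exact but not-yet-imposed) $V_t + \Div A(V) = F(V)$. Concretely, $\frac{d}{dt}\int \H(V|U)\,dx$ splits as
\[
\int_{\T^3}\partial_t E(V)\,dx - \int_{\T^3}\partial_t E(U)\,dx - \int_{\T^3}\partial_t\big[dE(U)\big](V-U)\,dx - \int_{\T^3}dE(U)\,\partial_t(V-U)\,dx .
\]
For the $U$-terms I would exploit that $E$ is the macroscopic entropy for the system $U_t + \Div A(U) = F(U)$: the entropy dissipation identity gives $\int \partial_t E(U)\,dx = \int dE(U)\,F(U)\,dx - (\text{flux terms that integrate to zero by periodicity}) $, with the viscous/drag contributions producing exactly $-\mu\int|\nabla u|^2 - \int\rho_f|u-u_f|^2$ after integration by parts. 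For the mixed terms I would use the chain rule $\partial_t[dE(U)] = d^2E(U)\,U_t = -d^2E(U)[\Div A(U) - F(U)]$ and combine with $-\int dE(U)\partial_t(V-U)$, adding and subtracting $\Div A(V)$ and $F(V)$ so that the "error" term $-\int dE(U)[V_t + \Div A(V) - F(V)]\,dx$ appears explicitly (this is zero when $V$ solves the limit system exactly, but is kept general here).

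The key algebraic step is to recognize that the remaining flux contributions assemble into the relative flux $A(V|U) = A(V) - A(U) - dA(U)(V-U)$ contracted against $\nabla(dE(U))$. This is the standard relative-entropy bookkeeping: one writes $\int \nabla(dE(U)):A(V)\,dx$, subtracts the "expected" linearized pieces coming from the $d^2E(U)\Div A(U)$ term and from the exact equation for $V$, and what survives is precisely $-\int \nabla(dE(U)):A(V|U)\,dx$ — using here that $d^2E(U)$ and $dA(U)$ are compatible (the system is symmetrizable with $E$ as entropy, so $d^2E\,dA$ is symmetric). The genuinely new, non-standard contribution compared with \cite{KMT2} is the drag coupling through $F$: the terms $F(V)$ and $F(U)$ each contain the friction $\pm(m - \vr u)$ and the viscosity $\mu\Delta u$, and when one forms $dE(U)(F(V) - F(U) - dF(U)(V-U))$-type expressions the friction produces the cross term $-\int(\rho_f - \rho_{\bar f})(\bar u - u)(u - u_f)\,dx$ together with the negative definite dissipation $-\int\rho_{\bar f}|(u_{\bar f} - \bar u) - (u_f - u)|^2\,dx$, after completing the square; the viscosity likewise produces $-\mu\int|\nabla(u-\bar u)|^2\,dx$ on the left against $\mu\int|\nabla\bar u|^2\,dx$ on the right. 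I would do this square-completion carefully since the friction is quadratic in the momentum/velocity variables and the entropy Hessian $d^2E$ is not constant.

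The main obstacle I anticipate is precisely this handling of the friction and viscous terms in $F$: unlike the pure conservation-law case, $F$ is neither zero nor a lower-order perturbation, and its quadratic dependence on $(\vr_f, m_f, u)$ means the relative-entropy manipulation does not reduce to a clean "relative flux" statement — one must track the drag cross terms by hand, identify the coercive combination $\rho_{\bar f}|(u_{\bar f} - \bar u) - (u_f - u)|^2$, and verify that every leftover is either sign-definite (and moved to the left) or of the form $\nabla(dE(U)):A(V|U)$, $\partial_t E(V)$, or the error term for $V$. Once the identity is assembled, the lemma follows; the subsequent Gronwall argument against $\sqrt\eps$ (using \eqref{hydro-better-est-2} and the regularity of the strong solution) is deferred to the proof of Theorem \ref{lim-thm} itself. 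I would finally double-check the boundary terms: all $\int_{\T^3}\Div(\cdots)\,dx$ vanish by periodicity, which is what lets the flux terms collapse to the stated form.
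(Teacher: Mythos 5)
Your proposal follows essentially the same route as the paper's proof in Appendix A: differentiate $\H(V|U)$ directly, use $\partial_t[dE(U)]=d^2E(U)U_t$ together with the equation for $U$, insert the error term $-\int dE(U)[V_t+\Div A(V)-F(V)]\,dx$ by adding and subtracting, collapse the convective contributions to $-\int\nabla(dE(U)):A(V|U)\,dx$ via the entropy-flux compatibility $\int(\nabla dE(U)):A(U)\,dx=\int\Div Q(U)\,dx=0$, and complete squares in the friction and viscous parts of $F$ to produce the coercive terms $\rho_{\bar f}|(u_{\bar f}-\bar u)-(u_f-u)|^2$ and $\mu|\nabla(u-\bar u)|^2$ plus the density cross term. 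This matches the paper's decomposition into $I_1,\dots,I_4$ and the explicit computation establishing \eqref{eq:longg}, so the approach is correct and not materially different.
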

\begin{proof} Although this lemma is essential for the proof of Theorem \ref{lim-thm}, it is rather lengthy and technical. Thus we postpone its proof in Appendix \ref{app-a} for the smooth flow of reading.
\end{proof}

\subsection{Relative entropy bound}

The proof of Theorem \ref{lim-thm}
will follow as a consequence of the following
proposition.
\begin{proposition}\label{key-prop} Suppose all assumptions in Theorem \ref{lim-thm}. Set
\[
U := \begin{pmatrix} \rho_f \\ \rho_f u_f \\u \end{pmatrix} \quad \mbox{and} \quad U^\eps := \begin{pmatrix} \rho_{f^\eps} \\ \rho_{f^\eps} u_{f^\eps} \\ u^\eps \end{pmatrix},
\]
where $(\rho_f, u_f,u)$ and $(f^\eps,u^\eps)$ are a unique strong solution to the system \eqref{hydro-eqn}-\eqref{ini-hydro-eqn} and weak solutions to the system \eqref{sing-V-INS}-\eqref{ini-sing-v-ins}, respectively. Then we have
\begin{align*}
\begin{aligned}
&\int_{\T^3} \H(U^\eps|U)(t)\,dx + \mu\int_0^t\int_{\T^3} |\nabla(u - u^\eps)|^2 dx ds + \frac12\int_0^t\int_{\T^3} \rho_{f^\eps}|(u_{f^\eps} - u^\eps) - (u_f - u)|^2 dxds \cr
& \qquad \leq C\sqrt{\eps},
\end{aligned}
\end{align*}
for all $t \in [0,T^*]$.
\end{proposition}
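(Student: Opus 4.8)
## Proof proposal

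\textbf{Overall strategy.} The plan is to run a Gronwall argument on the integrated relative entropy $t \mapsto \int_{\T^3} \H(U^\eps|U)\,dx$, starting from the identity in Lemma~\ref{lem-hyd-rel-1} applied with $V = U^\eps$ (the weak solution of the kinetic system) and $U$ the given smooth solution of \eqref{hydro-eqn}. The key observation is that the term $\int_{\T^3} dE(U)\left[V_t + \nabla\cdot A(V) - F(V)\right]dx$ in that identity measures the failure of $U^\eps$ to solve the limiting hydrodynamic system: for the true limit $U$ it vanishes identically, but for $U^\eps$ it is the ``consistency error'' and should be $\mathcal{O}(\sqrt\eps)$ thanks to the entropy dissipation bound \eqref{hydro-better-est-2}, which controls $\tfrac1\eps\int_0^t D_1(f^\eps)\,ds$. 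So the heart of the proof is: (i) compute the right-hand side explicitly when $V = U^\eps$, using the weak formulations of \eqref{sing-V-INS}; (ii) separate it into a ``good'' part bounded by $C\int_0^t\int_{\T^3}\H(U^\eps|U)\,dx$ via the smoothness (hence Lipschitz bounds and positivity of $\rho_f$) of $U$, plus the dissipative terms on the left, and a ``remainder'' part that is $\mathcal{O}(\sqrt\eps)$; (iii) apply Gronwall.

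\textbf{Key steps, in order.} First I would record that, because $f^\eps$ is not exactly Maxwellian, one must carefully relate the macroscopic quantities $\rho_{f^\eps}, \rho_{f^\eps}u_{f^\eps}$ appearing in $U^\eps$ to the kinetic fluxes; the momentum flux $\int \xi\otimes\xi\, f^\eps\,d\xi$ differs from $\rho_{f^\eps}u_{f^\eps}\otimes u_{f^\eps} + \rho_{f^\eps} I$ by a defect that is estimated by $D_1(f^\eps)$ (this is exactly the kind of computation behind \eqref{hydro-est-1}--\eqref{hydro-est-2}). So step one is to write down the ``approximate hydrodynamic system'' satisfied by $(\rho_{f^\eps}, \rho_{f^\eps}u_{f^\eps}, u^\eps)$ in the weak sense, isolating all error terms and showing each is controlled — after time integration — by $\sqrt{\tfrac1\eps\int_0^t D_1\,ds}\cdot\sqrt{\eps}\cdot(\text{bounded factor}) = \mathcal{O}(\sqrt\eps)$, using Cauchy--Schwarz and the uniform bounds \eqref{hydro-better-est-1}. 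Second, I would plug $V=U^\eps$ into Lemma~\ref{lem-hyd-rel-1}: the term $\int\partial_t E(V)\,dx$ combined with $-\int dE(U)[V_t+\dots]$ reconstructs (up to the consistency error) the energy balance for the kinetic system, whose dissipation $\tfrac1\eps D_1 + D_2$ is nonnegative and can be discarded or used. Third, the flux term $-\int \nabla(dE(U)):A(V|U)\,dx$ is the standard quadratic-in-$(V-U)$ term: since $A$ is smooth away from vacuum and $\nabla(dE(U))$ is bounded (here $\rho_f>0$ uniformly on $[0,T^*]$ by the strong-solution theorem), it is bounded by $C\int_{\T^3}\H(U^\eps|U)\,dx$. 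Fourth, the coupling term $-\int(\rho_f-\rho_{f^\eps})(\bar u - u)(u-u_f)\,dx$ and the terms $\int \rho_{f^\eps}|u^\eps - u_{f^\eps}|^2 + \mu\int|\nabla u^\eps|^2$ must be absorbed: the first is quadratic (bounded by the relative entropy plus the left-hand dissipation via Young), and the $u^\eps$-quadratic terms cancel against the corresponding pieces hidden inside $\int\partial_t E(U^\eps)\,dx$ once one uses the kinetic energy identity. Finally, assembling everything yields
\[
\frac{d}{dt}\int_{\T^3}\H(U^\eps|U)\,dx \leq C\int_{\T^3}\H(U^\eps|U)\,dx + C\sqrt\eps,
\]
and since $\H(U^\eps|U)(0) = 0$ by the well-prepared initial data hypothesis ($f_0$ is exactly the Maxwellian $M_{\rho_{f_0},u_{f_0}}$ and $u^\eps(0)=u_0$), Gronwall gives the claimed bound $C\sqrt\eps$ with the dissipative terms retained on the left.

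\textbf{Main obstacle.} The delicate point is controlling the momentum-equation error terms, i.e.\ the discrepancy between $\int_{\R^3}\xi\otimes\xi\, f^\eps\,d\xi$ and the Euler flux $\rho_{f^\eps}u_{f^\eps}\otimes u_{f^\eps}+\rho_{f^\eps} I$, together with the terms involving $u_{f^\eps}$ which, as emphasized in the introduction, lives only where $\rho_{f^\eps}>0$ and in no $L^p$ space a priori. The trick is that every such term, when integrated in time and tested against the smooth $dE(U)$, can be written as an inner product of $\sqrt{f^\eps}$ against $\tfrac{1}{\sqrt{f^\eps}}(\nabla_\xi f^\eps + (u_{f^\eps}-\xi)f^\eps)$, i.e.\ against the integrand of $D_1$; Cauchy--Schwarz then converts the $\tfrac1\eps\int_0^t D_1\,ds \leq C$ bound into an $\mathcal{O}(\sqrt\eps)$ estimate, exactly the mechanism already used to derive \eqref{hydro-est-2}. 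Keeping track of all these terms and verifying that none produce a genuinely $\mathcal{O}(1)$ contribution — in particular the convective term $u^\eps\cdot\nabla u^\eps$ versus $u\cdot\nabla u$, which needs the $L^2_tH^1_x$ bound on $u^\eps$ and the smoothness of $u$ — is where the bulk of the technical work lies, and is the part with ``no kin in the literature'' that the authors flag.
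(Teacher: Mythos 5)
Your proposal follows essentially the same route as the paper's proof: apply Lemma \ref{lem-hyd-rel-1} with $V=U^\eps$, bound the consistency term $\int dE(U)[U^\eps_t+\nabla\cdot A(U^\eps)-F(U^\eps)]\,dx$ by $C\sqrt{\eps}$ via the kinetic momentum-flux defect and the $\frac1\eps\int_0^t D_1\,ds$ bound from \eqref{hydro-better-est-2}, control the quadratic flux and coupling terms by the relative entropy plus half the dissipation, absorb the $\rho_{f^\eps}|u^\eps-u_{f^\eps}|^2$ and $\mu|\nabla u^\eps|^2$ terms using the kinetic entropy balance against $\int\partial_t E(U^\eps)\,dx$, and conclude by Gronwall from well-prepared data. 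This matches the paper's decomposition into $I_1,\dots,I_4$ and the mechanisms used for each.
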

\begin{proof}
From Lemma \ref{lem-hyd-rel-1}, we know that
\begin{align*}
\begin{aligned}
&\int_{\T^3} \H(U^\eps|U)(t)\,dx + \mu \int_0^t\int_{\T^3} |\nabla (u - u^\eps)|^2 dxds + \int_0^t\int_{\T^3} \rho_{f^\eps}|(u_{f^\eps} - u^\eps) - (u_f - u)|^2 dxds\cr
&\,\, = \int_0^t\int_{\T^3} \partial_t E(U^\eps) + \rho_{f^\eps}|u^\eps - u_{f^\eps}|^2 + \mu |\nabla u^\eps|^2 dxds  - \int_0^t\int_{\T^3} \nabla (dE(U)):A(U^\eps|U) dxds \\
&\,\,\, - \int_0^t\int_{\T^3} dE(U)\left[ U^\eps_t + \nabla \cdot A(U^\eps) - F(U^\eps)\right]dxds - \int_0^t\int_{\T^3} (\rho_f - \rho_{f^\eps})(u^\eps - u)(u - u_f)\,dxds,\cr
& \,\, =: \sum_{i=1}^4I_i.
\end{aligned}
\end{align*}
$\bullet$ Estimate of $I_1$: We first notice that
$\int_{\T^3} E(U^\eps)\,dx \leq \mathcal{F}(f^\eps,u^\eps)$, where $\mathcal{F}$ is given in \eqref{hydro:nota}. Then we obtain
\begin{align*}
\begin{aligned}
I_1(t) &= \int_{\T^3} \left( E(U^\eps)(t) - E(U_0) \right) dx +\int_0^t \int_{\T^3} \rho_{f^\eps}|u^\eps - u_{f^\eps}|^2 + \mu |\nabla u^\eps|^2 dxds\cr
&= \int_{\T^3} E(U^\eps)(t)\,dx - \mathcal{F}(f^\eps,u^\eps)(t) \cr
&\quad+ \mathcal{F}(f^\eps,u^\eps)(t) +\int_0^t \int_{\T^3} \rho_{f^\eps}|u^\eps - u_{f^\eps}|^2 + \mu |\nabla u^\eps|^2 dxds - \mathcal{F}(f_0,u_0)\cr
&\quad+  \mathcal{F}(f_0,u_0) - \int_{\T^3} E(U_0)\,dx \cr
&\leq C(T^*)\eps,
\end{aligned}
\end{align*}
where we used the facts that \eqref{hydro-better-est-2} and $\mathcal{F}(f_0,u_0) = \int_{\T^3} E(U_0)\,dx$. \newline
$\bullet$ Estimate of $I_2$: Straightforward computation shows that
\begin{align*}
\begin{aligned}
A(U^\eps|U) &= A(U^\eps) - A(U) - dA(U)(U^\eps-U)\cr
& =
\begin{pmatrix}
0 & 0 & 0 \\
\rho_{f^\eps}(u_{f^\eps} - u_f) \otimes (u_{f^\eps} - u_f) & 0 & 0 \\
(u^\eps - u) \otimes (u^\eps - u) & 0 & 0
\end{pmatrix}.
\end{aligned}
\end{align*}
This implies that
\begin{align*}
\begin{aligned}
\int_{\T^3} |A(U^\eps|U)|\,dx = \int_{\T^3} \rho_{f^\eps}|u_{f^\eps} - u_f|^2 dx + \int_{\T^3} |u^\eps - u|^2 dx \leq 2\int_{\T^3} \H(U^\eps|U)\,dx,
\end{aligned}
\end{align*}
and
\[
I_2(t) \leq C\int_0^t\int_{\T^3} \H(U^\eps|U)\,dxds.
\]
$\bullet$ Estimate of $I_3$: One can find that $\rho_{f^\eps}$ and $u_{f^\eps}$ satisfy
\begin{align*}
\begin{aligned}
& \partial_t\rho_{f^\eps} + \nabla \cdot (\rho_{f^\eps}u_{f^\eps}) = 0,\cr
& \partial_t(\rho_{f^\eps}u_{f^\eps}) + \nabla \cdot (\rho_{f^\eps}u_{f^\eps} \otimes u_{f^\eps}) + \nabla \rho_{f^\eps} - \rho_{f^\eps}(u_{f^\eps} - u^\eps) \cr
&\qquad \qquad = \nabla\cdot\left( \int_{\R^3} \left(u_{f^\eps}\otimes u_{f^\eps} - \xi \otimes \xi + \mathbb{I}\right)f^\eps d\xi\right),
\end{aligned}
\end{align*}
in the distribution sense on $\T^3 \times [0,T^*)$. Then we deduce that
\begin{align*}
\begin{aligned}
&\left| \int_0^t\int_{\T^3} dE(U)[U_s^\eps + \nabla \cdot A(U^\eps) - F(U^\eps)]dxdt\right|\cr
&\quad \leq \left| \int_0^t\int_{\T^3}|\nabla dE(U)|\left|\int_{\R^3} \left(u_{f^\eps}\otimes u_{f^\eps} - \xi \otimes \xi + \mathbb{I}\right)f^\eps d\xi \right|dxdt\right|\cr
&\quad \leq C\int_0^t\int_{\T^3}\left|\int_{\R^3} \left(u_{f^\eps}\otimes u_{f^\eps} - \xi \otimes \xi + \mathbb{I}\right)f^\eps d\xi \right|dxdt.
\end{aligned}
\end{align*}
Then we now apply the same argument in \cite[Lemma 4.8]{KMT2} to have
\[
\left|\int_0^t \int_{\T^3} dE(U)\left[ U_s^{\eps} + \nabla \cdot A(U^\eps) - F(U^\eps) \right] dx ds \right| \leq \sqrt{\eps}C(T^*).
\]
$\bullet$ Estimate of $I_4$: The strategy is to use the third term in the dissipation. By Cauchy-Schwartz inequality and using the fact
\[
1 \leq \min\left( \frac1x, \frac1y\right)(x+y), \quad \mbox{for} \quad x,y >0,
\]
we get
\begin{align}\label{estI4}
\begin{aligned}
&\left| \int_{\T^3} (\rho_{f} - \rho_{f^\eps})(u^\eps - u)(u - u_f) dx \right| \cr
& \quad \leq \|u-u_f\|_{L^\infty}\left( \int_{\T^3} \min\left( \frac{1}{\rho_f}, \frac{1}{\rho_{f^\eps}} \right)|\rho_f - \rho_{ f^\eps}|^2 dx\right)^\frac12\left( \int_{\T^3} (\rho_f + \rho_{f^\eps})|u - u^\eps|^2 dx\right)^\frac12\cr
&\quad \leq C\int_{\T^3} \H(U^\eps|U)\,dx + \frac14\int_{\T^3} \rho_{f^\eps}|u - u^\eps|^2 dx.
\end{aligned}
\end{align}
On the other hand, the second term of the last inequality in \eqref{estI4} is again estimated as follows.
\begin{align*}
\begin{aligned}
\int_{T^3} \rho_{f^\eps}| u - u^\eps|^2 dx &= \int_{\T^3} \rho_{f^\eps} | u - u_f + u_f - u_{f^\eps} + u_{f^\eps} - u^\eps|^2 dx\cr
&\leq 2\int_{\T^3} \rho_{f^\eps} |(u - u_f) - (u^\eps - u_{f^\eps})|^2 dx + 2\int_{\T^3} \rho_{f^\eps}|u_f - u_{f^\eps}|^2 dx, \\
\end{aligned}
\end{align*}
and this implies
\begin{align*}
\begin{aligned}
&\frac14\int_{\T^3} \rho_{f^\eps}|u - u^\eps|^2 dx \\
&\qquad\leq \frac12\int_{\T^3} \rho_{f^\eps}|u_f - u_{f^\eps}|^2 dx + \frac12\int_{\T^3} \rho_{f^\eps} |(u - u_f) - (u^\eps - u_{f^\eps})|^2 dx\cr
&\qquad\leq \int_{\T^3} \H(U^\eps|U)\,dx + \frac12\int_{\T^3} \rho_{f^\eps} |(u - u_f) - (u^\eps - u_{f^\eps})|^2 dx.
\end{aligned}
\end{align*}
This concludes that
\begin{equation*}
    \begin{split}
        &\left| \int_{\T^3} (\rho_{f} - \rho_{f^\eps})(u^\eps - u)(u - u_f) dx \right|\\
        &\qquad \leq C\int_{\T^3} \H(U^\eps|U)\,dx + \frac12\int_{\T^3} \rho_{f^\eps} |(u - u_f) - (u^\eps - u_{f^\eps})|^2 dx.
    \end{split}
\end{equation*}
From the above, we have
\begin{align*}
\begin{aligned}
&\int_{\T^3} \H(U^\eps|U)(t)\,dx + \mu \int_0^t\int_{\T^3} |\nabla (u - u^\eps)|^2 dxds \\
&\qquad \quad+ \frac12\int_0^t\int_{\T^3} \rho_{f^\eps}|(u_{f^\eps} - u^\eps) - (u_f - u)|^2 dxds\cr
&\qquad \leq C\sqrt{\eps} + C\int_0^t\int_{\T^3} \H(U^\eps|U)(s)\,dxds, \quad \mbox{for all} \quad t \in [0,T^*].
\end{aligned}
\end{align*}
We now apply the Gronwall's inequality to derive that
\begin{align*}
\begin{aligned}
&\int_{\T^3} \H(U^\eps|U)(t)\,dx + \mu\int_0^t\int_{\T^3} |\nabla(u - u^\eps)|^2 dx ds \\
&\qquad \quad+ \frac12\int_0^t\int_{\T^3} \rho_{f^\eps}|(u_{f^\eps} - u^\eps) - (u_f - u)|^2 dxds  \leq C\sqrt{\eps}.
\end{aligned}
\end{align*}
\end{proof}
\subsection{Proof of Theorem \ref{lim-thm}}
 The entropy inequality in Proposition \ref{key-prop} and arguments in \cite{KMT2} yield that
\begin{align*}
\begin{aligned}
&\rho_{f^\eps} \to \rho_f\quad \mbox{in } L^1_{loc}([0,T^*];L^1(\T^3)),\cr
&\rho_{f^\eps}u_{f^\eps} \to \rho_f u_f \quad \mbox{in } L^1_{loc}([0,T^*];L^1(\T^3)),\cr
&\rho_{f^\eps}|u_{f^\eps}|^2 \to \rho_f |u_f|^2 \quad \mbox{in } L^1_{loc}([0,T^*];L^1(\T^3)),\cr
&u^\eps \to u \quad \mbox{in } L^1_{loc}([0,T^*];L^2(\T^3)).
\end{aligned}
\end{align*}
Furthermore, we can also use the same argument in \cite{KMT2} to conclude that
\[
f^\eps \to \frac{\rho_f}{(2\pi)^\frac32} e^{-\frac{|\xi - u_f|^2}{2}} \quad \mbox{in } L^1_{loc}([0,T^*];L^1(\T^3 \times \R^3)).
\]
This completes the proof.
\qed

%
%
%
%
\section{A priori estimate of asymptotic behavior (Theorem \ref{a-priori-est-lt})}
In this section, we provide a long-time behavior estimate for the system \eqref{V-INS}-\eqref{ini-V-INS} without diffusion, i.e., $\sigma = 0$. Since the constants $\alpha$ and $\beta$ do not play any crucial role in our analysis as we mentioned before, we assume that $\alpha = \beta = 1$. For the estimate of large-time behaviour, we first notice that local density $\rho_f$ and velocity $u_f$ in \eqref{def-locals} satisfy the following hydrodynamic equations.
\begin{align}\label{hydro-lev}
\begin{aligned}
& \partial_t \rho_f + \nabla \cdot (\rho_f u_f) = 0, \cr
& \partial_t(\rho_f u_f) + \nabla \cdot (\rho_f u_f \otimes u_f) + \nabla \cdot \tilde{P} = \rho_f(u - u_f),
\end{aligned}
\end{align}
where $\tilde{P}$ is given by
\[
\tilde{P} := \int_{\bbr^3} (\xi - u_f) \otimes (\xi - u_f) f d\xi.
\]
We recall energy-fluctuation functions ${\mathcal E}_P, \mathcal{E}_U, {\mathcal E}_F$ and  ${\mathcal E}_I$:
\begin{align*}
\begin{aligned}
    {\mathcal E}_P(t) &:= \frac{1}{2}\int_{\bbt^3 \times \bbr^3}|\xi - u_f|^2 f dx d\xi, \\
    \mathcal{E}_U(t) &:= \frac{1}{2}\int_{\bbt^3 \times\bbt^3} |u_f(x) - u_f(y)|^2 \rho_f(x) \rho_f(y) dx dy, \\
    \mathcal{E}_F(t) &:= \frac{1}{2}\int_{\bbt^3} |u - u_c(t) |^{2} dx, \\
    \mathcal{E}_I(t) &:= \frac{1}{2}|u_c(t)-\xi_c(t)|^2,
    \end{aligned}
\end{align*}
Then we next investigate the time-evolution of the above energy-fluctuation functions.
\begin{lemma}\label{each-energy}
Let $(f,u)$ be classical solutions to the system \eqref{V-INS}-\eqref{ini-V-INS} with $\sigma = 0$ satisfying
\[
\lim_{|\xi| \to \infty} |\xi|^2 f (x,\xi,t) = 0, \qquad (x,t) \in \bbt^3 \times [0,T].
\]
The following identities hold:
\begin{align*}
\begin{aligned}
(i)~\frac{d\mathcal{E}_P}{dt} &= \int_{\bbt^3} (\nabla \cdot \tilde{P})\cdot u_f \,dx - 4\mathcal{E}_P. \cr
(ii)~\frac{d\mathcal{E}_U}{dt} &= - 2\int_{\bbt^3} (\nabla \cdot \tilde{P})\cdot u_f \,dx + 2\int_{\bbt^3} \rho_f(u-u_f) \cdot u_f \,dx\\
&\qquad  -2\int_{\bbt^3} \rho_f u_f \,dx \cdot \int_{\bbt^3} \rho_f (u - u_f)\, dx.\cr
(iii)~\frac{d\mathcal{E}_F}{dt} &= -\mu \int_{\bbt^3} |\nabla u |^2 \,dx + \int_{\bbt^3\times \bbr^{3}} (u_c
- u) \cdot (u-\xi) f \,d\xi dx. \cr
(iv)~\frac{d\mathcal
E_I}{dt} &= -2 \int_{\bbt^3 \times \bbr^{3}} (u_c-\xi_c)\cdot
(u-\xi) f \,d\xi dx.
\end{aligned}
\end{align*}
\end{lemma}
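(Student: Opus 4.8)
\textbf{Proof proposal for Lemma \ref{each-energy}.}

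The plan is to compute each of the four time derivatives by differentiating under the integral sign and substituting the evolution equations \eqref{V-INS} (with $\sigma=0$, $\alpha=\beta=1$) and the hydrodynamic equations \eqref{hydro-lev}. Since the solutions are assumed classical with $|\xi|^2 f \to 0$ as $|\xi|\to\infty$, all integrations by parts in $\xi$ produce no boundary terms.

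For (i), I would expand $\mathcal{E}_P = \frac12 \int (|\xi|^2 - 2\xi\cdot u_f + |u_f|^2) f\,d\xi dx$ and differentiate, using $\partial_t f + \xi\cdot\nabla f = \nabla_\xi\cdot[(\xi-u)f] + \nabla_\xi\cdot[(\xi-u_f)f]$ for the terms where $f$ is differentiated and \eqref{hydro-lev} for the terms carrying $u_f$. A cleaner route is to note $\mathcal{E}_P = \frac12\int \tilde P : \mathbb{I}\,dx = \frac12\int \operatorname{tr}\tilde P\,dx$ is not quite what we want; instead I would work directly with $m_2(f)$, $\rho_f u_f$, and $\rho_f|u_f|^2$. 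The key cancellations: the transport term integrates to zero over $\bbt^3$, the local-alignment self-term $\int (\xi - u_f)\cdot(\text{stuff})$ contributes the $-4\mathcal{E}_P$ (from $\int |\xi-u_f|^2 f$ appearing twice, once from the friction-with-$u$ piece after using $\int u_f\cdot(\xi-u_f)f\,d\xi = 0$ pointwise in $x$, and once from the $u_f$-alignment piece), and the pressure term $\int(\nabla\cdot\tilde P)\cdot u_f\,dx$ survives because $u_f$ is not constant. One must be careful that the friction-with-$u$ term $\int(\xi-u)\cdot(\xi-u_f)f$ splits as $\int|\xi-u_f|^2 f + \int(u_f - u)\cdot(\xi-u_f)f$ and the second summand vanishes after $\xi$-integration pointwise in $x$; this is the main bookkeeping obstacle in (i).

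For (ii), I would write $\mathcal{E}_U = \int \rho_f |u_f|^2\,dx - |\int \rho_f u_f\,dx|^2 = M_2'(\text{velocity part}) - \ldots$; more precisely $\mathcal{E}_U = \int_{\bbt^3}\rho_f|u_f|^2 dx - \big|\int_{\bbt^3}\rho_f u_f\,dx\big|^2$, differentiate each piece using \eqref{hydro-lev}, and watch the pressure term appear with coefficient $-2$ (one factor $2$ from expanding the square, after the cross terms are organized). For (iii) I use $\partial_t u + u\cdot\nabla u + \nabla p = \mu\Delta u - \rho_f(u-u_f) = \mu\Delta u - \int(u-\xi)f\,d\xi$, pair with $u - u_c$, note $\int_{\bbt^3}(u\cdot\nabla u)\cdot(u-u_c) = 0$ and $\int \nabla p\cdot(u-u_c) = 0$ by incompressibility, $\int \Delta u\cdot(u-u_c) = -\|\nabla u\|_{L^2}^2$, and $\dot u_c = \int \partial_t u\,dx = -\int\int(u-\xi)f\,d\xi dx$ so that the $u_c$-part of the pairing rearranges into the stated form; the drag term pairs against $u - u_c$ to give exactly $\int(u_c-u)\cdot(u-\xi)f\,d\xi dx$. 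For (iv), $\dot u_c = -\int\int(u-\xi)f$ and $\dot\xi_c = \int\int\xi(\partial_t f + \xi\cdot\nabla f)\,d\xi dx$ (transport drops out) $= \int\int(u-\xi)f + \int\int(u_f-\xi)f = \int\int(u-\xi)f$ after using $\int(u_f-\xi)f\,d\xi = 0$ pointwise; hence $\frac{d}{dt}(u_c-\xi_c) = -2\int\int(u-\xi)f\,d\xi dx$ and $\frac{d\mathcal{E}_I}{dt} = (u_c-\xi_c)\cdot\frac{d}{dt}(u_c-\xi_c)$ gives the claim.

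The main obstacle is (i): correctly tracking how both friction terms in $\eqref{V-INS}_1$ feed the dissipation $-4\mathcal{E}_P$, since the pointwise-in-$x$ identity $\int_{\bbr^3} u_f\cdot(u_f-\xi)f\,d\xi = 0$ (already used in Proposition \ref{energy-prop}) must be invoked at the right moment to discard the spurious $u_f - u$ cross term, and one must also verify that the time derivative hitting $u_f$ inside $|\xi - u_f|^2$ combines with \eqref{hydro-lev} to produce precisely the $\int(\nabla\cdot\tilde P)\cdot u_f\,dx$ term and no leftover. Once (i) is in hand, (ii) is a parallel but longer computation and (iii)--(iv) are routine.
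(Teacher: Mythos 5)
Your proposal is correct and follows essentially the same route as the paper: differentiate each functional, substitute the kinetic equation and the moment (hydrodynamic) equations \eqref{hydro-lev}, integrate by parts, and exploit the pointwise identity $\int_{\R^3}(\xi-u_f)f\,d\xi=0$ to kill the cross terms. The only cosmetic difference is that you split $\mathcal{E}_P$ and $\mathcal{E}_U$ into separate moments (so the $\int(\nabla\cdot\tilde P)\cdot u_f\,dx$ term enters through the momentum equation in \eqref{hydro-lev}), whereas the paper keeps $|\xi-u_f|^2$ intact and extracts that term from the free-transport piece $\xi\cdot\nabla f$; both computations close identically.
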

\begin{proof}
For the estimate of $(i)$, it follows from the system \eqref{V-INS} that
\begin{align}\label{e-p-est-1}
\begin{aligned}
\frac{d\mathcal{E}_P}{dt} &= -\int_{\bbt^3 \times \bbr^3} (\xi - u_f) \cdot u_f^{\prime} f \,dx d\xi + \frac{1}{2} \int_{\bbt^3 \times \bbr^3} |\xi - u_f|^2  \partial_t f \,dx d\xi \cr
&= \frac{1}{2} \int_{\bbt^3 \times \bbr^3} |\xi - u_f|^2 \Big( -\xi \cdot \nabla f + \nabla_{\xi} \cdot [(\xi - u_f)f] - \nabla_{\xi} \cdot [(u-\xi)f] \Big) dx d\xi\cr
&=: \sum_{i=1}^{3}\mathcal{I}_i,
\end{aligned}
\end{align}
where $u_f^{\prime}:= \frac{d}{dt} u_f$, and $\mathcal{I}_i,i=1,2,3$ are given by
\begin{align}\label{e-p-est-2}
\begin{aligned}
\mathcal{I}_1 &= \frac{1}{2} \int_{\bbt^3 \times \bbr^3} \nabla(|\xi - u_f|^2) \cdot \xi f \,dx d\xi = -\int_{\bbt^3 \times \bbr^3} \big( (\xi - u_f) \cdot \nabla u_f \big) \cdot \xi f \,d\xi dx,\cr
\mathcal{I}_2 &= - \int_{\bbt^3 \times \bbr^3} |\xi - u_f|^2 f \,dx d\xi,\cr
\mathcal{I}_3 &= \int_{\bbt^3 \times \bbr^3} (\xi - u_f) \cdot (u - \xi) f \,dx d\xi = -\int_{\bbt^3 \times \bbr^3} \xi \cdot (\xi - u_f) f \,dx d\xi \cr
&= - \int_{\bbt^3 \times \bbr^3} |\xi - u_f|^2 f \,dx d\xi.
\end{aligned}
\end{align}
A further integration by parts leads to
\begin{align*}
\begin{aligned}
\mathcal{I}_1 &= -\sum_{i,j=1}^{3} \int_{\bbt^3 \times \bbr^3} (\xi^i - u_f^i)\partial_{i} u_f^j\xi^jf \,dx d\xi \\
&=-\sum_{i,j=1}^{3} \int_{\bbt^3 \times \bbr^3} (\xi^i - u_f^i)(\partial_{i} u_f^j) (\xi^j - u_f^j) f \,dx d\xi
= \int_{\bbt^3} (\nabla \cdot \tilde{P})\cdot u_f \,dx.
\end{aligned}
\end{align*}
Thus, (i) is obtained by combining \eqref{e-p-est-1} and \eqref{e-p-est-2} with the above equality.
For the identity (ii), we use the hydrodynamic equations \eqref{hydro-lev} to find
\begin{align*}
\begin{aligned}
\frac{d\mathcal{E}_U}{dt} &=\int_{\bbt^3 \times \bbt^3}\left( u_f(x) - u_f(y)\right) \cdot \left( u_f^{\prime}(x) - u_f^{\prime}(y)\right) \rho_f(x) \rho_f(y) \,dx dy \cr
&\quad + \int_{\bbt^3 \times \bbt^3} |u_f(x) - u_f(y)|^2 \rho_f^{\prime}(x) \rho_f(y) \,dx dy \\
&= 2\int_{\bbt^3} u_f \cdot u_f^{\prime}\rho_f \,dx - 2\left(\int_{\bbt^3} \rho_f u_f \,dx\right) \cdot \left(\int_{\bbt^3} \rho_f u_f^{\prime} \,dx\right) \cr
&\quad + 2\int_{\bbt^3} \rho_f u_f \cdot \left(u_f \cdot \nabla u_f\right) \,dx - 2\left(\int_{\bbt^3} \rho_f \left(u_f \cdot \nabla u_f\right) \,dx\right) \cdot \left(\int_{\bbt^3} \rho_f u_f \,dx \right)\cr
&=- 2\int_{\bbt^3} (\nabla \cdot \tilde{P})\cdot u_f \,dx + 2\int_{\bbt^3} \rho_f(u-u_f) \cdot u_f \,dx \\
&\quad-2\left(\int_{\bbt^3} \rho_f u_f  \,dx\right) \cdot \left(\int_{\bbt^3} \rho_f (u - u_f) \,dx\right),
\end{aligned}
\end{align*}
where we used the fact that $\|\rho_f\|_{L^1(\bbt^3)} = 1$ and
\[
\rho_f u_f^{\prime} + \rho_f u_f \cdot \nabla u_f + \nabla \cdot \tilde{P} = \rho_f (u-u_f).
\]
For the estimate of (iii), we use the definition of ${\mathcal E}_F$ and direct integration by parts to get
\begin{align*}
\begin{aligned}
\frac{d\mathcal{E}_F}{dt} &=\int_{\bbt^3}(u-u_c)\cdot\partial_t
u \,dx - u^{\prime}_c \cdot \int_{\bbt^3}(u-u_c) \,dx  = \int_{\bbt^3}(u-u_c) \cdot \partial_t u \,dx \cr
& = -\int_{\bbt^3}(u\cdot \nabla u)\cdot(u-u_c)\,dx
- \int_{\bbt^3}(u-u_c)\cdot\nabla p \,dx\\
& \quad + \mu \int_{\bbt^3}(u- u_c)\cdot\Delta u \,dx
 -\int_{\bbt^3 \times \bbr^{3}}(u- u_c)\cdot(u-\xi)f \,dx d\xi\\
&=-\mu \int_{\bbt^3}|\nabla u|^2 \,dx-\int_{\bbt^3 \times \bbr^{3}}(u- u_c)\cdot(u-\xi)f \,dx d\xi,
\end{aligned}
\end{align*}
since $\nabla \cdot u = 0$. Finally we employ the following facts
\[ \xi^{\prime}_c = \int_{\bbt^3 \times
\bbr^3} \big(u - \xi \big) f \,d\xi dx    \quad \mbox{and} \quad u^{\prime}_c =-\int_{\bbt^3 \times
\bbr^3} \big(u - \xi \big) f \,d\xi dx, \]
to derive the estimate of (iv)
\[
\frac{d{\mathcal E}_I}{dt} = (u_c-\xi_c)\cdot(u^{\prime}_c - \xi^{\prime}_c) = -2(u_c -\xi_c) \cdot \int_{\bbt^3 \times \bbr^{3}} (u-\xi) f d\xi dx.
\]
\end{proof}

\begin{remark}
Since $\int_{\T^3}\vr_f \,dx \equiv 1$, we have that
\[
\int_{\bbt^3} (u_f - \xi_c)\rho_f \,dx = \int_{\bbt^3 \times \bbr^3} (\xi - \xi_c)f \,d\xi dx = 0.
\]
As a consequence,
\begin{align*}
\begin{aligned}
\mathcal{E}_U(t) &= \frac{1}{2}\int_{\bbt^3 \times\bbt^3} |u_f(x) - u_f(y)|^2 \rho_f(x) \rho_f(y) \,dx dy \cr
&=\frac{1}{2}\int_{\bbt^3 \times\bbt^3} |u_f(x) - \xi_c + \xi_c - u_f(y)|^2 \rho_f(x) \rho_f(y) \,dx dy \cr
&=\frac{1}{2}\int_{\bbt^3 \times \bbt^3} |u_f(x) - \xi_c|^2 \rho_f(x)\rho_f(y) \,dx dy \\
&\quad+ \frac{1}{2}\int_{\bbt^3 \times \bbt^3} |u_f(y) - \xi_c|^2 \rho_f(x)\rho_f(y) \,dx dy  - \left( \int_{\bbt^3} (u_f - \xi_c)\rho_f \,dx\right)^2\cr
&= \int_{\bbt^3} |u_f - \xi_c|^2 \rho_f \,dx.
\end{aligned}
\end{align*}
\end{remark}

\begin{proof}[Proof of Theorem \ref{a-priori-est-lt}]
For the sake of the reader, we divide the proof into two steps. \newline

1. In this part, we will show that
\begin{align}\label{local-e-est-1}
\begin{aligned}
(i) &\frac{d}{dt} \Big( 2\mathcal{E}_P + \mathcal{E}_U\Big) = -8\mathcal{E}_P - 2\mathcal{E}_U + 2\int_{\bbt^3 \times \bbr^3} (\xi - \xi_c)\cdot u f d\xi dx. \cr
(ii) & \frac{d}{dt} \Big( 2\mathcal{E}_F + \mathcal{E}_I\Big) = 2\mathcal{E}_U + 4\mathcal{E}_P -2\mu \int_{\bbt^3} |\nabla u |^2 dx - 2\int_{\bbt^3 \times \bbr^3} |u - \xi|^2 f dx d\xi \\
&\qquad \qquad \qquad \quad- 2\int_{\bbt^3 \times \bbr^3} (\xi - \xi_c)\cdot u f d\xi dx.
\end{aligned}
\end{align}
For a detailed estimate of (i), we use Lemma \ref{each-energy} to get
\begin{align*}
\begin{aligned}
\frac{d}{dt} \Big( 2\mathcal{E}_P + \mathcal{E}_U\Big)
&= -8\mathcal{E}_P + 2\int_{\bbt^3} \rho_f (u- u_f) \cdot (u_f - \xi_c) \,dx \cr
&= -8\mathcal{E}_P + 2\int_{\bbt^3} \rho_f (u - \xi_c) \cdot (u_f - \xi_c) \,dx - 2\int_{\bbt^3} \rho_f |u_f - \xi_c|^2 dx \cr
&= -8\mathcal{E}_P - 2\mathcal{E}_U + 2\int_{\bbt^3} \rho_f u \cdot(u_f - \xi_c) \,dx,
\end{aligned}
\end{align*}
where we used
\[
\int_{\bbt^3} \rho_f u_f \,dx = \xi_c \quad \mbox{and} \quad \int_{\bbt^3} \rho_f (u_f - \xi_c) \,dx = 0.
\]
For the second part (ii), it also follows from the Lemma \ref{each-energy} that
\begin{align}\label{local-e-est-2}
\begin{aligned}
\frac{d}{dt} \Big( 2\mathcal{E}_F + \mathcal{E}_I\Big)  &=  -2\mu \int_{\bbt^3} |\nabla u |^2 dx - 2\int_{\bbt^3 \times \bbr^3} (u - \xi) \cdot (u- \xi_c) f dx d\xi \cr
&=  -2\mu \int_{\bbt^3} |\nabla u |^2 dx - 2\int_{\bbt^3 \times \bbr^3} |u - \xi|^2 f dx d\xi \\
&\quad - 2\int_{\bbt^3 \times \bbr^3} u \cdot (\xi - \xi_c) f dx d\xi + 2\int_{\bbt^3 \times \bbr^3} \xi \cdot (\xi - \xi_c) f dx d\xi.
\end{aligned}
\end{align}
On the other hand, the fourth term in the last inequality of \eqref{local-e-est-2} is estimated by
\[
2\int_{\bbt^3 \times \bbr^3} \xi \cdot (\xi - \xi_c)f dx d\xi = 2\int_{\bbt^3 \times \bbr^3} |\xi - \xi_c|^2f dx d\xi = 2\mathcal{E}_U + 4\mathcal{E}_P.
\]
This yields the estimate of (ii). \newline
2. We now combine two inequalities in \eqref{local-e-est-1} to find
\[
\frac{d}{dt} \mathcal{E}(t) =  -4\mathcal{E}_P - 2\mu \int_{\bbt^3} |\nabla u |^2 dx - 2\int_{\bbt^3 \times \bbr^3} |u - \xi|^2 f dx d\xi.
\]
We set a corresponding dissipation function $\mathcal{D}(t)$ to $\mathcal{E}(t)$:
\[
\mathcal{D}(t) := 4\mathcal{E}_P + 2\mu \int_{\bbt^3} |\nabla u |^2 dx + 2\int_{\bbt^3 \times \bbr^3} |u - \xi|^2 f dx d\xi.
\]
Then we obtain
\[
\frac{d}{dt}\mathcal{E}(t) + \mathcal{D}(t) = 0.
\] 

{\it Claim:} there exists a positive constant $C>0$ such that $\mathcal{E}(t) \leq C\mathcal{D}(t)$ for all $t \geq 0$. 

For the proof of claim, we estimate the last term in the function $\mathcal{D}(t)$ as follows.
\begin{align}\label{rev-eqn1}
\begin{aligned}
\int_{\bbt^3\times \bbr^3} |u-\xi |^2 f d\xi dx  &= \int_{\bbt^3 \times \bbr^3} |u-u_c + u_c-\xi_c +\xi_c -\xi|^2 f d\xi dx\cr
&= \int_{\bbt^3} \rho_f |u- u_c|^2 dx + |u_c- \xi_c|^2 + \int_{\bbt^3 \times \bbr^3}|\xi_c - \xi |^2 f d\xi dx \cr
&\quad +2\int_{\bbt^3 \times \bbr^3} (u -u_c) \cdot (u_c -\xi) f d\xi dx \cr
&= 2\mathcal{E}_I + 2\mathcal{E}_P + \mathcal{E}_U + \int_{\bbt^3} \rho_f |u- u_c|^2 dx \\
&\quad + 2\int_{\bbt^3 \times \bbr^{3}} (u -u_c) \cdot (u_c -\xi) f d\xi dx,
\end{aligned}
\end{align}
where we used
\[
\int_{\bbt^3 \times \bbr^3} (u_c - \xi_c) \cdot (\xi_c - \xi)f d\xi dx = 0,
\]
and
\begin{equation*}
    \begin{split}
\int_{\bbt^3 \times \bbr^3}|\xi_c - \xi |^2 f d\xi dx &= \int_{\bbt^3 \times \bbr^3}|\xi_c - u_f |^2 f d\xi dx + \int_{\bbt^3 \times \bbr^3}|u_f - \xi |^2 f d\xi dx \\
&= \mathcal{E}_U + 2\mathcal{E}_P.
\end{split}
\end{equation*}
Furthermore we use the fact that 
\begin{align*}
\begin{aligned}
\int_{\bbt^3 \times \bbr^3} |u_c - \xi|^2 f dx d\xi  &= \int_{\bbt^3 \times \bbr^3} |u_c - \xi_c + \xi_c - \xi|^2 f dx d\xi\cr
&= \int_{\bbt^3 \times \bbr^3} (|u_c - \xi_c|^2 + |\xi_c - \xi|^2) f dx d\xi \cr
&= 2\mathcal{E}_I + 2\mathcal{E}_P + \mathcal{E}_U.
\end{aligned}
\end{align*}
to find
\begin{align}\label{rev-eqn2}
\begin{aligned}
- 4\int_{\bbt^3 \times \bbr^3} ( u - u_c) \cdot (u_c - \xi)f dx d\xi &\leq 4 \int_{\bbt^3} \rho_f |u-u_c|^2 dx + \int_{\bbt^3 \times \bbr^3} |u_c - \xi|^2 f dx d\xi \cr
& = 4 \int_{\bbt^3} \rho_f |u-u_c|^2 dx + 2\mathcal{E}_I + 2\mathcal{E}_P + \mathcal{E}_U.
\end{aligned}
\end{align}
Then it follows from \eqref{rev-eqn1} and \eqref{rev-eqn2} that
\[
2\mathcal{E}_I + 2\mathcal{E}_P + \mathcal{E}_U \leq 2\int_{\T^3 \times \R^3} | u - \xi|^2 f dx d\xi + 2\int_{\T^3} \rho_f | u - u_c|^2 dx.
\]
This deduces that
\begin{align*}
\begin{aligned}
\mathcal{E}(t) &\leq \int_{\T^3} | u - u_c|^2 dx + 2\int_{\T^3 \times \R^3} | u - \xi|^2 f dx d\xi + 2\int_{\T^3} \rho_f | u - u_c|^2 dx \cr
&\leq C\left(1 + \|\rho_f\|_{L^\infty(0,\infty;L^{3/2})}\right)\int_{\T^3} |\nabla u|^2 dx + 2\int_{\T^3 \times \R^3} | u - \xi|^2 f dx d\xi\cr
&\leq C\mathcal{D}(t),
\end{aligned}
\end{align*}
where we used the following Sobolev inequalities.
\begin{align*}
\begin{aligned}
\int_{\bbt^3} |u-u_c|^2 dx &\leq C\int_{\bbt^3} |\nabla u|^2 dx, \cr
\int_{\bbt^3}\rho_f |u - u_c|^2 dx &\leq \|\rho_f\|_{L^{3/2}}\|u - u_c\|^2_{L^6} \\
&\leq C\|\rho_f\|_{L^{3/2}}\|u - u_c\|^2_{H^1} \leq C\|\rho_f\|_{L^\infty(0,\infty;L^{3/2})}\|\nabla u\|^2_{L^2}.
\end{aligned}
\end{align*}
This yields the proof of claim, and we have
\[
\frac{d}{dt}\mathcal{E}(t) + C\mathcal{E}(t) \leq 0, \quad t \geq 0,
\]
for some positive constant $C > 0$. This completes the proof.
\end{proof}
\appendix
%
%
%
%
\section{Proof of Lemma \ref{lem-hyd-rel-1}}\label{app-a}

In this part, we provide the proof of Lemma \ref{lem-hyd-rel-1}. It follows from \eqref{def-re-en} that
\begin{align*}
\begin{aligned}
\frac{d}{dt}\int_{\T^3} \H(V|U)\,dx & = \int_{\T^3} \partial_t E(V)dx - \int_{\T^3} dE(U)(V_t + \nabla \cdot A(V)- F(V))\,dx \cr
&\quad +\int_{\T^3}d^2E(U) \nabla \cdot A(U)(V-U) + dE(U) \nabla \cdot A(V)\,dx\cr
&\quad -\int_{\T^3} d^2E(U)F(U)(V-U) + dE(U)F(V)\,dx\cr
&=: \sum_{i=1}^4 I_i.
\end{aligned}
\end{align*}
Using integration by parts, we find
\begin{align*}
\begin{aligned}
I_3 &= \int_{\T^3} \left(\nabla dE(U)\right):\left( dA(U)(V-U) - A(V)\right)dx  \cr
&= -\int_{\T^3} \left(\nabla dE(U)\right):\left(A(V|U) + A(U)\right)dx\cr
&=-\int_{\T^3} \left(\nabla dE(U)\right):A(V|U)\,dx.
\end{aligned}
\end{align*}
Here we used the fact that
\[
\int_{\T^3} \left(\nabla dE(U)\right): A(U)\,dx = \int_{\T^3} \nabla \cdot Q(U)\,dx = 0,
\]
where $Q$ is an entropy flux function given by
\[
Q_i(U) := \sum_{k}A_{ki}(U)d_kE(U).
\]
For the estimate $I_4$, we claim that the following identity holds.
\begin{equation}\label{eq:longg}
    \begin{split}
        &\int_{\T^3}d^2E(U)F(U)(V-U) + dE(U)F(V)\,dx \\
        &\qquad = -\int_{\T^3}\vr_{\bar f}\left|u_{\bar f} - \bar u\right|^2 \,dx - \mu\int_{\T^3}|\Grad \bar u|^2\,dx \\
        &\qquad \quad + \int_{\T^3}\vr_{\bar f}\left|(u_f- u) - (u_{\bar f} - \bar u)\right|^2\,dx
        + \mu \int_{\T^3}\left|\Grad (u-\bar u) \right|^2\,dx\\
        &\qquad \quad+ \int_{\T^3}(\vr_f - \vr_{\bar f})(\bar u - u)(u_f - u)\,dx.
    \end{split}
\end{equation}
{\it Proof of claim:} We first notice that
\begin{equation*}
    dE(U) = \begin{pmatrix}
        \log \vr_f + 1 -\frac{m_f^2}{2\vr_f^2} \\
        \frac{m_f}{\vr_f} \\
        u
    \end{pmatrix}
    \quad \mbox{and} \quad
    d^2E(U) = \begin{pmatrix}
        * & - \frac{m_f}{\vr_f^2} & 0 \\
        * & \frac{1}{\vr_f} & 0 \\
        0 & 0 & 1
    \end{pmatrix}.
\end{equation*}
Then by direct calculation, we have
\begin{equation}\label{long:1}
    \begin{split}
        &\int_{\T^3}d^2E(U)F(U)(V-U)dx \\
        &\qquad=\int_{\T^3} \vr_f\left[u_f \bar u - u_f u - u \bar u + u^2\right]
                    + \vr_{\bar f}\left[-u_f u + u_f^2 + u_{\bar{f}}u - u_{\bar f}u_f\right]~dx \\
        &\qquad \quad +\int_{\T^3} (\bar u - u)\mu \Delta u - (\bar u - u)\Grad p~dx.
    \end{split}
\end{equation}
and moreover
\begin{equation}\label{long:2}
    \begin{split}
        \int_{\T^3}dE(U)F(V)~dx
        &= \int_{\T^3} \vr_{\bar f}\left[u_f \bar u - u_{\bar f}u_f + u_{\bar f}u - \bar u u \right]~dx \\
        &\quad + \int_{\T^3}  \mu \,u \Delta \bar u - u \Grad \bar p~dx.
    \end{split}
\end{equation}
By combining \eqref{long:1}-\eqref{long:2}, and using that $\Div u = \Div \bar u = 0$, we obtain
\begin{equation}\label{long:collect}
    \begin{split}
      &\int_{\T^3}d^2E(U)F(U)(V-U) + dE(U)F(V)~dx \\
      &\qquad\quad = \int_{\T^3} \vr_{\bar f}\left[-u_f u + u_f^2 - 2u_{\bar f}u_f + 2 u_{\bar f}u+ u_f \bar u - \bar u u\right]~dx \\
      &\qquad\qquad +\int_{\T^3} \vr_f \left[u_f \bar u - u_f u - u \bar u + u^2\right]~ dx \\
      &\qquad\qquad + \int_{\T^3} \mu \left[u \Delta \bar u + \bar u \Delta u - u \Delta u\right]~dx
      =: J_1 + J_2 + J_3.
    \end{split}
\end{equation}
By adding and subtracting, we rewrite $J_1$ as follows.
\begin{equation*}
    \begin{split}
         J_1=& \int_{\T^3} \vr_{\bar f}\left[-u_f u + u_f^2 - 2u_{\bar f}u_f + 2 u_{\bar f}u+ u_f \bar u - \bar u u\right]~dx \\
        =& \int_{\T^3}\vr_{\bar f}\left[-2(u_{\bar f}- \bar u)(u_f - u)- \bar u(u_f - u) - u_f u + u_f^2\right]~dx.
    \end{split}
\end{equation*}
Next, we add and subtract $\vr_{\bar f}|u_{f} - u|^2$ to discover
\begin{equation*}
    \begin{split}
         J_1 &= \int_{\T^3} \vr_{\bar f}\left[-2(u_{\bar f}- \bar u)(u_f - u) + |u_{f} -  u|^2\right]dx \\
        &\quad + \int_{\T^3} \vr_{\bar f}\left[-|u_{f} -  u|^2- \bar u(u_f - u) - u_f u + u_f^2\right]dx \\
        &= \int_{\T^3} \vr_{\bar f} \left|(u_{\bar f} - \bar u) - (u_f - u)\right|^2~dx - \int_{\T^3}\vr_{\bar f}|u_{\bar f} - \bar u|^2dx \\
        &\quad + \int_{\T^3}\vr_{\bar f}\left[u_f u - u^2 - \bar uu_f + \bar u u\right]dx.
    \end{split}
\end{equation*}
As a consequence, we find that
\begin{equation}\label{long:II}
    \begin{split}
         J_1+ J_2 &= \int_{\T^3} \vr_{\bar f} \left|(u_{\bar f} - \bar u) - (u_f - u)\right|^2dx - \int_{\T^3}\vr_{\bar f}|u_{\bar f} - \bar u|^2dx \\
        &\quad + \int_{\T^3}(\vr_{\bar f}-\vr_f)\left[u_f u - u^2 - \bar uu_f + \bar u u\right]dx\\
        &= \int_{\T^3} \vr_{\bar f} \left|(u_{\bar f} - \bar u) - (u_f - u)\right|^2dx - \int_{\T^3}\vr_{\bar f}|u_{\bar f} - \bar u|^2dx \\
        &\quad + \int_{\T^3}(\vr_{\bar f}-\vr_f)(u - \bar u)(u_f - u)\,dx.
    \end{split}
\end{equation}
Next, we apply integration by parts to write $J_3$ in the form
\begin{equation}\label{long:III}
    \begin{split}
        J_3 = \int_{\T^3} \mu \left[u \Delta \bar u + \bar u \Delta u - u \Delta u\right]dx = -\mu\int_{\T^3}|\Grad \bar u|^2dx+ \mu\int_{\T^3}|\Grad (u-\bar u)|^2dx.
    \end{split}
\end{equation}
By setting \eqref{long:II} and \eqref{long:III} in \eqref{long:collect},
we obtain \eqref{eq:longg}.
\section*{Acknowledgments}
JAC was partially supported by the project MTM2011-27739-C04-02
DGI (Spain) and 2009-SGR-345 from AGAUR-Generalitat de Catalunya.
JAC acknowledges support from the Royal Society by a Wolfson
Research Merit Award. YPC was supported by Basic Science Research
Program through the National Research Foundation of Korea funded
by the Ministry of Education, Science and Technology (ref.
2012R1A6A3A03039496). JAC and YPC were supported by Engineering
and Physical Sciences Research Council grants with references
EP/K008404/1 (individual grant) and EP/I019111/1 (platform grant).
The work of TK was supported by the Norwegian Research Council (proj. 205738).

%
%
%
%

\end{document}